\documentclass[12pt,a4paper,reqno]{amsart}

\usepackage[T1]{fontenc}
\usepackage[utf8]{inputenc}
\usepackage{lmodern}
\usepackage{amsmath,amssymb,amsthm}
\usepackage[margin=1in]{geometry}
\usepackage[colorlinks=true,linkcolor=blue,citecolor=blue,urlcolor=blue]{hyperref}

\numberwithin{equation}{section}

\newtheorem{theorem}{Theorem}[section]
\newtheorem{corollary}[theorem]{Corollary}
\newtheorem{proposition}[theorem]{Proposition}
\newtheorem{lemma}[theorem]{Lemma}
\newtheorem{problem}[theorem]{Problem}

\theoremstyle{definition}
\newtheorem{remark}[theorem]{Remark}

\newcommand{\T}{\mathbb T}
\newcommand{\B}{\mathbb B}
\newcommand{\U}{\mathbb U}
\newcommand{\C}{\mathbb C}
\newcommand{\N}{\mathbb N}
\newcommand{\Hol}{\mathrm{Hol}}

\title[Isoperimetric-type inequalities for pluriharmonic functions]{Isoperimetric-type inequalities for pluriharmonic functions on the polydisc}

\author{Suman Das}
\address{Suman Das \vskip0.05cm Department of Mathematics, Shantou University, Shantou, Guangdong 515063, P. R. China.}
\email{suman@stu.edu.cn}

\author{Antti Rasila}
\address{Antti Rasila \vskip0.05cm Department of Mathematics with Computer Science, Guangdong Technion--Israel Institute of Technology, Shantou, Guangdong 515063, P. R. China; \vskip0.01cm
Department of Mathematics, Technion--Israel Institute of Technology, Haifa 3200003, Israel.}
\email{antti.rasila@gtiit.edu.cn; antti.rasila@iki.fi}

\author{Jian-Feng Zhu}
\address{Jian-Feng Zhu \vskip0.05cm Department of Mathematics, Shantou University, Shantou, Guangdong 515063, P. R. China.}
\email{flandy@stu.edu.cn}

\subjclass[2020]{Primary 31C10, 32A35; Secondary 32A36, 30H10, 30H20}
\keywords{Pluriharmonic functions; Hardy spaces; Weighted Bergman spaces; Isoperimetric-type inequalities; Riesz-type inequalities}

\hypersetup{
 pdftitle={Isoperimetric-type inequalities for pluriharmonic functions on the polydisc},
 pdfauthor={Suman Das, Antti Rasila, and Jian-Feng Zhu},
 pdfsubject={Pluriharmonic Hardy and weighted Bergman inequalities},
 pdfkeywords={Pluriharmonic functions; Hardy spaces; Weighted Bergman spaces; Isoperimetric-type inequalities; Riesz-type inequalities}
}

\begin{document}

\begin{abstract}
	We prove isoperimetric-type inequalities for complex-valued pluriharmonic functions in the unit polydisc $\U^n\subset\C^n$. Denote by $h^p(\U^n)$ and $b^p_{\mathbf q}(\U^n)$, respectively, the pluriharmonic Hardy space and the pluriharmonic weighted Bergman space in $\U^n$, where $\mathbf q=(q_1,\ldots,q_n)>-\mathbf1$. For $m\in\N$, $m\geq2$, write $\mathbf{m-2}=(m-2,\ldots,m-2)$ and let
	\[
	d\mu_{\mathbf{m-2}}(z)
	=\frac{(m-1)^n}{\pi^n}
	\prod_{k=1}^n
	\left[(1-|z_k|^2)^{m-2}\,dx_kdy_k\right],
	\qquad z_k=x_k+iy_k.
	\]
	We prove that if $1<p_1,\ldots,p_m<\infty$ and $f_j\in h^{p_j}(\U^n)$, then
	\[
	\int_{\U^n}\prod_{j=1}^m|f_j(z)|^{p_j}\,
	d\mu_{\mathbf{m-2}}(z)
	\leq
	\prod_{j=1}^m
	\left[
	\frac{\sqrt2\cos\left(\frac{\pi}{2mp_j}\right)}
	{\sqrt{1-|\cos(\pi/p_j)|}}
	\right]^{p_j}
	\prod_{j=1}^m
	\|f_j\|_{h^{p_j}(\U^n)}^{p_j}.
	\]
	In particular,
	\[
	\|f\|_{b^{mp}_{\mathbf{m-2}}(\U^n)}
	\leq
	\frac{\sqrt2\cos\left(\frac{\pi}{2mp}\right)}
	{\sqrt{1-|\cos(\pi/p)|}}
	\|f\|_{h^p(\U^n)}.
	\]
	For $p>2$, we refine this in the form
	\[
	\|f\|_{b^{mp}_{\mathbf{m-2}}(\U^n)}
	\leq
	\left[
	\sqrt2\cos\left(\frac{\pi}{4m}\right)
	\right]^{2n/p}
	\|f\|_{h^p(\U^n)}.
	\]
	Consequently, the obtained constant in the diagonal inclusion tends to $1$ as $p\to\infty$, for fixed $m$ and $n$. When $m=2$ and $n=1$, the latter estimate coincides with best-known planar estimate. Explicit lower bounds at $p=2$, together with the dimension-free upper estimate, show that the optimal diagonal constants converge to $\sqrt2$ as $m\to\infty$, uniformly in the dimension. 
	
	Finally, we prove that if $f\in h^2(\U^n)$, then its restriction to the unit ball $\B_n\subset\C^n$ satisfies
	\[
	\|f\|_{h^{2n}(\B_n)}
	\leq
	\sqrt2\cos\left(\frac{\pi}{4n}\right)
	\|f\|_{h^2(\U^n)}.
	\]

\end{abstract}

\maketitle

\section{Introduction and Main Results}
\subsection{Notations}
Let $\U$ be the open unit disc and $\T$ be the unit circle. Denote by
\[
 \U^n=\U\times\cdots\times\U,
 \qquad
 \T^n=\T\times\cdots\times\T
 \qquad (n\text{ times})
\]
the unit polydisc and its distinguished boundary, respectively. We write $dm_n$ for normalized Haar measure on $\T^n$, that is,
\[
 dm_n(\omega)=\frac{d\theta_1\cdots d\theta_n}{(2\pi)^n},
 \qquad
 \omega=(e^{i\theta_1},\ldots,e^{i\theta_n}).
\]
For $0<p<\infty$, the holomorphic Hardy space $H^p(\U^n)$ consists of all holomorphic functions $F$ in $\U^n$ such that
\[
 \|F\|_{H^p(\U^n)}
 =\sup_{0<r<1}
 \left(
 \int_{\T^n}|F(r\omega)|^p\,dm_n(\omega)
 \right)^{1/p}<\infty.
\]

For $q>-1$, let
\[
 d\mu_q(z)=\frac{q+1}{\pi}(1-|z|^2)^q\,dx\,dy,
 \qquad z=x+iy\in\U,
\]
denote a normalized weighted measure on $\U$. We also consider the corresponding product measure on $\U^n$:
\[
 d\mu_{\mathbf q}(z)=\prod_{k=1}^n d\mu_{q_k}(z_k),
 \qquad z\in\U^n,
\]
where \(\mathbf{q}=(q_1,\ldots,q_n)>-\mathbf{1}\) is an \(n\)-multi-index; the inequality \(\mathbf{q}_1>\mathbf{q}_2\) between two \(n\)-multi-indices means that \(q_{1,k}>q_{2,k}\), \(k=1,\ldots,n\). We denote the \(n\)-multi-index \((m,\ldots,m)\) by \(\mathbf{m}\), and write \(\mathbf{m-2}=(m-2,\ldots,m-2)\). 
When a scalar \(a>-1\) is used as an index in \(d\mu_a\) on \(\mathbb U^n\), it denotes the constant multi-index \(\mathbf{a}=(a,\ldots,a)\). 
In particular, for a real number \(m>1\), \[ d\mu_{\mathbf{m-2}}(z) = \frac{(m-1)^n}{\pi^n} \prod_{k=1}^n (1-|z_k|^2)^{m-2}\,dx_kdy_k, \qquad z_k=x_k+iy_k . \]

The weighted Bergman spaces $A_{\mathbf q}^p(\U^n)$, $p>0$, $\mathbf q>-\mathbf1$, consist of all holomorphic functions $f$ in $\U^n$ such that
\[
 \|f\|_{A_{\mathbf q}^p(\U^n)}
 =\left(
 \int_{\U^n}|f(z)|^p\,d\mu_{\mathbf q}(z)
 \right)^{1/p}<\infty.
\]
We write $A^p(\U^n):=A_{\mathbf0}^p(\U^n)$ for the ordinary unweighted Bergman space. The classical theory of Hardy and Bergman spaces in the polydisc can be found in Rudin's monograph \cite{RudinPolydisc}.

Let $\B_n$ denote the unit ball in $\C^n$. We write \(d\sigma_n\) for normalized surface measure on \(\partial\B_n\) and \(d\nu_n\) for normalized Lebesgue measure on \(\B_n\), so that $\sigma_n(\partial\B_n)=\nu_n(\B_n)=1$. For \(0<p<\infty\), the Hardy space on $\B_n$ is the class of holomorphic functions $f$ such that
\[
 \|f\|_{H^p(\B_n)}
 =\sup_{0<r<1}
 \left(
 \int_{\partial\B_n}|f(r\zeta)|^p\,d\sigma_n(\zeta)
 \right)^{1/p} < \infty.
\]
We refer to the books of Rudin \cite{RudinBall} and Zhu \cite{ZhuSpaces} for the spaces of holomorphic functions in $\B_n$.

For finite-dimensional vector-valued holomorphic functions, the corresponding Hardy and Bergman norms are defined in the same way, with the Euclidean norm of the vector value in place of the scalar modulus. Throughout the paper, when $0<p<1$, all Hardy and Bergman quantities are understood as quasi-norms, and we retain the same notation.

\subsection{Pluriharmonic Functions}
A complex-valued function $f$ in $\U^n$ is called pluriharmonic if $f\in C^2(\U^n)$ and
\[
 \frac{\partial^2 f}{\partial z_j\,\partial \overline z_k}=0,
 \qquad 1\leq j,k\leq n.
\]
Since $\U^n$ is simply connected, this is equivalent to the representation
\[
 f=h+\overline g,
\]
where $h$ and $g$ are holomorphic in $\U^n$. The representation is unique after imposing $g(0)=0$, and we use this normalization throughout this paper.

The pluriharmonic Hardy space $h^p(\U^n)$ consists of the pluriharmonic functions $f$ for which
\[
 \|f\|_{h^p(\U^n)}
 =\sup_{0<r<1}
 \left(\int_{\T^n}|f(r\omega)|^p\,dm_n(\omega)\right)^{1/p}<\infty.
\]
We denote by $b_{\mathbf q}^p(\U^n)$ the weighted pluriharmonic Bergman space with norm
\[
 \|f\|_{b_{\mathbf q}^p(\U^n)}
 =\left(\int_{\U^n}|f(z)|^p\,d\mu_{\mathbf q}(z)\right)^{1/p}.
\]
When $0<p<1$, these expressions are understood as quasi-norms. The pluriharmonic Hardy space on the ball is defined analogously by
\[
 \|f\|_{h^p(\B_n)}
 =\sup_{0<r<1}
 \left(\int_{\partial\B_n}|f(r\zeta)|^p\,d\sigma_n(\zeta)\right)^{1/p}<\infty.
\]

\subsection{Background}
The classical isoperimetric inequality relates the length $L$ of a rectifiable Jordan curve to the area $A$ of the bounded domain that it encloses:
\[
A\leq\frac{L^2}{4\pi},
\]
with equality if and only if the curve is a circle.

The inequality has several important analytic formulations. Carleman \cite{Carleman} reduced it to an integral inequality for holomorphic functions in the unit disc. In our notation, Carleman's theorem reads
\[
\int_{\U}|F(z)|^2\,d\mu_0(z)
\leq
\left(\int_{\T}|F(\zeta)|\,dm_1(\zeta)\right)^2.
\]
Equivalently,
\[
 \|F\|_{A^2(\U)}\leq \|F\|_{H^1(\U)},\qquad F\in H^1(\U).
\]
This inequality is sharp, and equality is attained precisely by multiples of the square of the Szeg\H{o} kernel, that is, by functions of the form
\[
 F(z)=\frac{c}{(1-z\overline a)^2},
 \qquad c\in\C,\quad a\in\U.
\]
Carleman's theorem was subsequently extended and refined in several directions. Aronszajn \cite{Aronszajn} obtained a bilinear form of the inequality, while Strebel \cite[Theorem 19.9]{Strebel} established the inequality for all $p>0$. Mateljevi\'c and Pavlovi\'c \cite{MateljevicPavlovic} developed related isoperimetric inequalities on simply connected planar domains. Further developments may be found in the surveys of Gamelin and Khavinson \cite{GamelinKhavinson}, Osserman \cite{Osserman}, and Vukoti\'c \cite{Vukotic}.

A major advance was made by Burbea \cite{Burbea1987}, who proved a weighted family of inequalities. If \(m\in\N\), \(m\geq2\), $0<p<\infty$, and \(F\in H^p(\U)\), then
\begin{equation}\label{eq:Burbea-disk}
\int_{\U}|F(z)|^{mp}\,d\mu_{m-2}(z)
\leq
\left( \int_{\T} |F(\zeta)|^p\,dm_1(\zeta)\right)^m .
\end{equation}
Equivalently,
\[
H^p(\U)\subset A^{mp}_{m-2}(\U)
\]
with the inclusion operator having norm one. The classical Carleman--Strebel inclusion \(H^p\subset A^{2p}\) is recovered by taking \(m=2\). Burbea's theorem may be viewed as an analytic counterpart of a higher-order isoperimetric inequality, and has become one of the central results in this area.

Pavlovi\'c and Dostani\'c \cite{PavlovicDostanic} discovered a related multidimensional phenomenon. They proved the norm-one inclusion
\[
 H^2(\U^n)\subset H^{2n}(\B_n),
\]
or, equivalently,
\[
 \int_{\partial\B_n}|F(\zeta)|^{2n}\,d\sigma_n(\zeta)
 \leq
 \left(\int_{\T^n}|F(\omega)|^2\,dm_n(\omega)\right)^n,
 \qquad F\in H^2(\U^n).
\]
For $n\geq2$, restricting this inequality to functions of one coordinate recovers Burbea's weighted disc inequality.

Several multidimensional analogues have since been obtained. For example, Kalaj \cite{KalajPolydisk} established isoperimetric inequalities on generalized polydiscs. His result, in a special case, shows that for holomorphic functions \(F_1,F_2\) in \(\mathbb U^n\), \[ \int_{\mathbb U^n}|F_1(z)|^{p_1}|F_2(z)|^{p_2}\,d\mu_{\mathbf{0}}(z) \leq \int_{\mathbb T^n}|F_1(\omega)|^{p_1}\,dm_n(\omega) \int_{\mathbb T^n}|F_2(\omega)|^{p_2}\,dm_n(\omega). \]
The equality cases are described in terms of the powers of the Szeg\H{o} kernel. 

A sharp higher-dimensional analogue on the unit polydisc was proved by Markovi\'c \cite{Markovic}. If $m\in\N$, $m\geq2$, $p_j>0$, and $F_j\in H^{p_j}(\U^n)$, $j=1,\ldots,m$, then
\begin{equation}\label{eq:Markovic-polydisc}
 \int_{\U^n}
 \prod_{j=1}^m |F_j(z)|^{p_j}\,d\mu_{\mathbf{m-2}}(z)
 \leq
 \prod_{j=1}^m
 \int_{\T^n}|F_j(\omega)|^{p_j}\,dm_n(\omega).
\end{equation}
The equality cases are completely characterized. If none of the functions $F_j$ is identically zero, equality holds if and only if
\[
 F_j(z)=C_jK_n(z,\zeta)^{2/p_j},
\]
where $\zeta\in\U^n$ is common to all factors and $C_j\neq0$. If one factor is identically zero, both sides vanish and equality is automatic. Here
\[
 K_n(z,\zeta)=\prod_{k=1}^n\frac1{1-z_k\overline{\zeta_k}}
\]
is the Szeg\H{o} kernel of $H^2(\U^n)$. Taking all $F_j$ equal in \eqref{eq:Markovic-polydisc} gives the sharp inclusion
\[
 H^p(\U^n)\subset A^{mp}_{\mathbf{m-2}}(\U^n),
 \qquad
 \|F\|_{A^{mp}_{\mathbf{m-2}}}\leq \|F\|_{H^p}.
\]

Very recently, Chen, Gui, and Zhang \cite[Theorem~1.2]{ChenGuiZhang} obtained a real-parameter version of the sharp disc inclusion. In our notation, if $\alpha>1$, $p\geq1$, and $F\in H^p(\U)$, then
\[
 \|F\|_{A^{\alpha p}_{\alpha-2}(\U)}
 \leq \|F\|_{H^p(\U)}.
\]
Thus the contractive disc inclusion is available for every real weight parameter $\alpha>1$. We retain an integer parameter $m$, since it also counts the factors in the multilinear polydisc inequality.

The harmonic case has generated interest as well. Kalaj and Me\v{s}trovi\'c \cite{KalajMestrovic} obtained isoperimetric inequalities for planar harmonic functions. Kalaj and Bajrami \cite[Theorem~1.1]{KalajBajrami} proved that every real-valued harmonic function $f\in h^p(\U)$, $p>1$, satisfies
\[
 \|f\|_{b_0^{2p}(\U)}\leq R_{p,2}\|f\|_{h^p(\U)},
\]
where
\[
 R_{p,2}=
 \begin{cases}
 \dfrac{\cos(\pi/(4p))}{\cos(\pi/(2p))},&1<p\leq2,\\[2mm]
 \dfrac{\cos(\pi/(4p))}{\sin(\pi/(2p))},&p\geq2.
 \end{cases}
\]
The same estimate holds for complex-valued harmonic functions as well. Indeed, if $\varphi$ is the radial boundary function of $f$, positivity of the Poisson operator bounds $|f|$ by the Poisson extension of $|\varphi|$, whose $h^p$-norm is $\|\varphi\|_{L^p(\T)}=\|f\|_{h^p(\U)}$. Kalaj \cite{Kalaj2019} also obtained this estimate for integer exponents $p\geq2$ by means of sharp Riesz-type inequalities. For $p>2$, Melentijevi\'c \cite[Section~9]{Melentijevic} found, when $m=2$, an upper bound that tends to $1$ as $p\to\infty$. One purpose of this paper is to extend this high-$p$ phenomenon to every $m\geq2$ and to the polydisc.

Unlike the holomorphic inequalities, our pluriharmonic results require $p>1$. The strong endpoint inclusion fails even in the plane for every $m\geq2$. A counterexample and suitable substitutes under stronger hypotheses are given in Section~\ref{sec:endpoint}.

\subsection{Main Results}
For $k\in\N$, put
\[
 D_k=\sqrt2\cos\left(\frac{\pi}{4k}\right).
\]
For $m\in\N$, $m\geq2$, and $1<p<\infty$, put
\[
 R_{p,m}=\frac{\sqrt2\cos\left(\frac{\pi}{2mp}\right)}
 {\sqrt{1-|\cos(\pi/p)|}}.
\]
We then define
\begin{equation}\label{eq:Gamma-intro}
 \Gamma_{p,m,n}=
 \begin{cases}
 R_{p,m},&1<p\leq2,\\[1mm]
 \min\{R_{p,m},D_m^{2n/p}\},&p>2.
 \end{cases}
\end{equation}

Our first result is a pluriharmonic version of Markovi\'c's polydisc inequality \eqref{eq:Markovic-polydisc}.

\begin{theorem}\label{thm:intro-main}
Let $m\in\N$, $m\geq2$, $1<p_1,\ldots,p_m<\infty$, and let $f_j\in h^{p_j}(\U^n)$, $j=1,\ldots,m$. Then
\[
 \int_{\U^n}
 \prod_{j=1}^m |f_j(z)|^{p_j}\,d\mu_{\mathbf{m-2}}(z)
 \leq
 \prod_{j=1}^m\Gamma_{p_j,m,n}^{p_j}
 \prod_{j=1}^m
 \|f_j\|_{h^{p_j}(\U^n)}^{p_j}.
\]
\end{theorem}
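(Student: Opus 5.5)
Since $\Gamma_{p,m,n}$ is a minimum of two constants when $p>2$, we treat the two bounds separately: for each $j$ we find a holomorphic $F_j\in H^{p_j}(\U^n)$ with
\[
|f_j(z)|^{p_j}\le C_j\,|F_j(z)|^{p_j}\quad (z\in\U^n),\qquad \|F_j\|_{H^{p_j}(\U^n)}^{p_j}\le \|f_j\|_{h^{p_j}(\U^n)}^{p_j},
\]
with $C_j$ equal to either $R_{p_j,m}^{p_j}$ or $D_m^{2n}$. Markovi\'c's inequality \eqref{eq:Markovic-polydisc} applied to $F_1,\dots,F_m$ then yields the claim, choosing the better constant for each $j$ independently. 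The choice is independent because the majorization is pointwise and \eqref{eq:Markovic-polydisc} holds for arbitrary exponents $p_j>0$. Moreover, $C_j$ may be replaced by the sharper constant $(\sqrt2\cos(\pi/(2mp_j)))^{p_j}$ times a Riesz-type factor, as we now make precise.

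\emph{Route to $R_{p,m}$.} Write $f=h+\overline g$ with $g(0)=0$, and set $F=h+g$, $G=h-g$ as holomorphic functions. The idea is to compare $|f|^{p}$ with a quantity of the form $\operatorname{Re}$ of a holomorphic power, as in Kalaj--Bajrami. The pointwise inequality we aim for is
\[
|f|^{p}\le \frac{2^{p/2}\cos^{p}(\pi/(2mp))}{(1-|\cos(\pi/p)|)^{p/2}}\cdot\Bigl(\text{a plurisubharmonic majorant } \Phi\Bigr),
\]
where $\Phi=|H|^{p}$ for a holomorphic $H$ whose $H^p$ norm is controlled by $\|f\|_{h^p}$. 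The factor $\sqrt2/\sqrt{1-|\cos(\pi/p)|}$ is the sharp constant in the Riesz/Verbitsky-type estimate $\|h\|_p^2+\|g\|_p^2\lesssim\|h+\bar g\|_p^2$, or more precisely in the pluriharmonic conjugate inequality with constant $1/\sqrt{1-|\cos(\pi/p)|}$.

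The factor $\cos(\pi/(2mp))$ arises from the following step. In Burbea's proof one uses $|F|^{mp}$, and one can gain by applying \eqref{eq:Markovic-polydisc} with the $m$ factors taken as holomorphic branches $(h\pm\dots)^{\cdots}$. Concretely, the key elementary lemma is
\[
|a+\bar b|^{p}\le \frac{1}{\cos^{p}(\pi/(2mp))}\,\max\cdots
\]
I expect this step to be the main obstacle. My first attempt is a slicing argument: reduce by Fubini to the one-variable disc case on each coordinate, where the plurisubharmonic-minorant technique of Hollenbeck--Verbitsky gives the sharp Riesz constant. These one-variable constants tensorize over the product $\U^n$ only in the $n$-independent form $R_{p,m}$. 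This is consistent with the statement, since $R_{p,m}$ does not depend on $n$.

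\emph{Route to $D_m^{2n/p}$ for $p>2$.} Here we use $|f|^p=(|f|^2)^{p/2}$ together with the orthogonality available at exponent $2$. For $p=2$, $\|f\|_{h^2}^2=\|h\|_{H^2}^2+\|g\|_{H^2}^2$, and one checks coordinatewise that $|h+\bar g|^2\le 2(|h|^2+|g|^2)$, refined by $\cos^2(\pi/(4m))$ after averaging against $d\mu_{m-2}$ in each variable. This gives a factor $D_m^2$ per coordinate, hence $D_m^{2n}$ in total. For general $p>2$ we apply this to $|f|^{p/2}$, which we realize through a vector-valued holomorphic majorant (a subharmonic majorant of $|f|^{p/2}$ with $h^2$-norm $\|f\|_{h^p}^{p/2}$). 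Since $|f|^{p/2}$ is not pluriharmonic, this is done via the least pluriharmonic majorant: the Poisson integral $P[|\varphi|^{p/2}]$, where $\varphi$ is the boundary function. The majorant $P[|\varphi|^{p/2}]$ is a real pluriharmonic function in $h^2$ with norm $\|f\|_{h^p}^{p/2}$. Applying the $p=2$ inequality with $m$ factors to it gives the constant $D_m^{2n}$ on the $p_j$-th powers, that is, $D_m^{2n/p_j}$ on the norms, as required.
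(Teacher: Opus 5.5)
Your proposal has genuine gaps. The first is structural. Your framework is a pointwise bound $|f_j|^{p_j}\le C_j|F_j|^{p_j}$ by a scalar holomorphic $F_j$ with $\|F_j\|_{H^{p_j}}\le\|f_j\|_{h^{p_j}}$, followed by Markovi\'c's inequality. You never construct such an $F_j$, and the natural pointwise comparison cannot give $R_{p,m}$. Between $f=h+\overline g$ and its square function $Q=(|h|^2+|g|^2)^{1/2}$, the pointwise bound is $|f|\le\sqrt2\,Q$. This is attained (e.g.\ $h\equiv\frac12$, $g=z_1$ at $z_1=\frac12$), so it only yields $\sqrt2A_p$. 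In $R_{p,m}=A_pB_{mp}$, the factor $B_{mp}=\sqrt2\cos(\pi/(2mp))$ is the \emph{reverse} Riesz constant at exponent $mp$, and the gain over $\sqrt2$ appears only after integration. The forward constant is $A_p=1/\sqrt{1-|\cos(\pi/p)|}$, without the $\sqrt2$ you attach to it.

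Your $R_{p,m}$ route stops at an elided ``key elementary lemma''. The suggested slicing fix points the wrong way: coordinatewise iteration multiplies one-variable constants and produces $n$-dependent bounds. This is exactly why the Poisson route gives $D_m^n$ rather than $D_m$ at $p=2$.

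The missing idea is the square-function transfer carried out on the whole polydisc, with $g(0)=0$ so that both sign conditions hold. It has three steps.
\begin{itemize}
\item The forward Chen--Hamada inequality gives $\sup_r\int_{\T^n}Q(r\omega)^p\,dm_n(\omega)\le A_p^p\|f\|_{h^p}^p$.
\item Markovi\'c's scalar inequality does not apply to $Q$, which is the norm of the $\C^2$-valued map $(h,g)$. One needs the vector-valued extension, proved by induction on $n$ through Banach-valued slices. It gives $\int Q^{mp}\,d\mu_{\mathbf{m-2}}\le\bigl(\sup_r\int_{\T^n}Q(r\omega)^p\,dm_n(\omega)\bigr)^m$.
\item The reverse Chen--Hamada inequality at exponent $mp$ is applied to the anisotropic dilates $f(r_1\zeta_1,\ldots,r_n\zeta_n)$ and integrated against $\prod_k 2(m-1)(1-r_k^2)^{m-2}r_k\,dr_k$. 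This gives $\int|f|^{mp}\,d\mu_{\mathbf{m-2}}\le B_{mp}^{mp}\int Q^{mp}\,d\mu_{\mathbf{m-2}}$.
\end{itemize}

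In the $D_m$ route your architecture matches the paper's: Jensen, then a Poisson majorant, then a $p=2$ input per coordinate. However, two steps are wrong or missing. First, $P_n[|\varphi|^{p/2}]$ is not pluriharmonic for $n\ge2$, only $n$-harmonic. For $f=z_1+z_2$ and $p=4$ it equals $2+2\operatorname{Re}(z_1\overline{z_2})$. Had it been pluriharmonic, the dimension-free $p=2$ estimate would have produced $D_m^{2/p}$, so your count $D_m^{2n}$ does not follow from your own justification. The majorant is not holomorphic either, so Markovi\'c's inequality is not what closes this route.

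What is needed is the one-variable bound $\int_{\U}|P_1\varphi|^{2m}\,d\mu_{m-2}\le D_m^{2m}\|\varphi\|_{L^2(\T)}^{2m}$. This is the case $n=1$, $p=2$ of the transfer, since $A_2=1$ and $B_{2m}=D_m$. It must be iterated over the coordinates using Minkowski's integral inequality, which moves the $L^2$ norm in the last variable outside the $L^{2m}(d\mu_{\mathbf{m-2}})$ norm in the others. This yields $D_m^{2mn}$. One works with dilates $f(\rho\,\cdot)$ and uses Fatou's lemma to avoid boundary issues.

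Finally, the multilinear statement needs no pointwise majorants. Once each $f_j$ satisfies $\|f_j\|_{b^{mp_j}_{\mathbf{m-2}}}\le\Gamma_{p_j,m,n}\|f_j\|_{h^{p_j}}$, H\"older's inequality with $m$ equal exponents gives the theorem, with the better constant chosen for each $j$ separately.
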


In the diagonal case this gives
\begin{equation}\label{eq:intro-diag}
 \|f\|_{b^{mp}_{\mathbf{m-2}}(\U^n)}
 \leq
 \Gamma_{p,m,n}
 \|f\|_{h^p(\U^n)}.
\end{equation}
The constant $R_{p,m}$ is independent of the dimension $n$. When $m=2$ and $n=1$, $R_{p,2}$ is the Kalaj--Bajrami upper bound discussed above, so this planar diagonal specialization is already known. In particular, $R_{2,2}=D_2$, the upper bound obtained by Kalaj and Me\v{s}trovi\'c \cite[Theorem~1.2 and Remark~1.3]{KalajMestrovic}. We note that it is not known if these upper bounds are sharp.

The high-$p$ part of Theorem \ref{thm:intro-main} follows from a separate diagonal estimate. For fixed $m$ and $n$, it is substantially better than the above estimate when $p$ is sufficiently large.

\begin{theorem}\label{thm:intro-high-p}
Let $m\in\N$, $m\geq2$, $p>2$, and let $f\in h^p(\U^n)$. Then
\[
 \|f\|_{b^{mp}_{\mathbf{m-2}}(\U^n)}
 \leq
 D_m^{2n/p}\|f\|_{h^p(\U^n)}.
\]
\end{theorem}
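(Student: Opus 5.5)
The plan is to obtain Theorem~\ref{thm:intro-high-p} by complex interpolation between the endpoint $p=\infty$ and the endpoint $p=2$. Let $P$ be the $n$-fold Poisson integral, mapping a function $\varphi$ on $\T^n$ to its pluriharmonic extension to $\U^n$. Every $f\in h^p(\U^n)$ with $p>1$ equals $P\varphi$, where $\varphi$ is its boundary function and $\|\varphi\|_{L^p(\T^n)}=\|f\|_{h^p(\U^n)}$. So it suffices to bound the linear operator
\[
P:L^p(\T^n,dm_n)\to L^{mp}(\U^n,d\mu_{\mathbf{m-2}}).
\]
The map $P$ is linear over $\C$, so the Riesz--Thorin theorem (complex scalars) applies to it.

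The endpoints are as follows.

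\emph{The $\infty$ endpoint.} Since $\mu_{\mathbf{m-2}}$ is a probability measure and the Poisson kernel is a positive kernel of total mass one, we have $\|P\varphi\|_{L^\infty}\le\|\varphi\|_{L^\infty}$. So $P$ has norm at most $1$ from $L^\infty$ to $L^\infty$.

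\emph{The $2$ endpoint.} Here I aim to prove
\[
\|P\varphi\|_{L^{2m}(\U^n,\mu_{\mathbf{m-2}})}\le D_m^{\,n}\|\varphi\|_{L^2(\T^n)}.
\]

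Interpolating with $1/p=\theta/2$, the target exponent satisfies $1/(mp)=\theta/(2m)+(1-\theta)/\infty$. This is consistent with the exponent $mp$ in the theorem. Riesz--Thorin then gives the operator bound $D_m^{n\theta}\cdot 1^{1-\theta}=D_m^{2n/p}$, which is exactly the claimed constant.

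The $L^2\to L^{2m}$ bound I would reduce to dimension $n=1$ by tensorization. Both the measure $\mu_{\mathbf{m-2}}$ and the Poisson kernel on $\U^n$ factor as products over coordinates, so $P=P_1\otimes\cdots\otimes P_1$. Apply the one-variable bound in the variable $z_1$, with the other boundary variables frozen. Then use Minkowski's integral inequality, which is valid because $2m\ge2$, to move the $L^2$ norm in the remaining variables inside the $L^{2m}$ norm. Iterating over the coordinates produces the product constant $D_m^n$. This is the standard fact that norms of maps $L^2\to L^{q}$ with $q\ge2$ are multiplicative under tensor products.

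The main obstacle is the planar estimate
\[
\int_\U|f|^{2m}\,d\mu_{m-2}\le D_m^{2m}\|f\|_{h^2(\U)}^{2m},\qquad D_m=\sqrt2\cos\frac{\pi}{4m}.
\]
To prove it, I would write $f=h+\overline g$ with $g(0)=0$. Orthogonality in $L^2(\T)$ then gives $\|h\|_{H^2}^2+\|g\|_{H^2}^2=\|f\|_{h^2}^2$. Bound $|f|\le|h|+|g|$ and normalize $\|h\|_{H^2}=\cos t$, $\|g\|_{H^2}=\sin t$. The first thing I would try is a plurisubharmonic majorant argument for the pair $(h,g)$. Find a function $\Phi(a,b)$ with $\Phi(a,b)\ge(|a|+|b|)^{2m}$ such that $\Phi(h,g)$ can be fed into Burbea's inequality \eqref{eq:Burbea-disk}, or into Markovi\'c's multilinear inequality \eqref{eq:Markovic-polydisc} with $n=1$ and suitable exponents. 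For instance, write $(|h|+|g|)^{2}\le\lambda^{-1}|h|^2+(1-\lambda)^{-1}|g|^2$ and realize the right-hand side as the squared norm of the vector-valued holomorphic function $(\lambda^{-1/2}h,(1-\lambda)^{-1/2}g)$. I would then apply the vector-valued form of Burbea's inequality and optimize over $\lambda$ and $t$. If this crude route does not yield exactly $\sqrt2\cos(\pi/4m)$, I would refine the majorant using the angular structure: take $\Phi$ of the form $|a|^{2m}$ times a function of $\arg$ that is extremal in a Hollenbeck--Verbitsky/Kalaj-type subharmonicity lemma. The constant $\cos(\pi/4m)$ is characteristic of such extremals.
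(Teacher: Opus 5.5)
Your interpolation framework is sound and matches the paper. Lemma~\ref{lem:product-Poisson} proves $\|P_n\|_{L^2(\T^n)\to L^{2m}(\U^n,d\mu_{\mathbf{m-2}})}\le D_m^n$ by exactly your tensorization (one-variable bound in $z_n$, Minkowski's integral inequality, induction). The paper also remarks that Riesz--Thorin against the trivial $L^\infty$ bound gives $D_m^{2n/p}$; its main text uses the equivalent Jensen step $|f_\rho|^{p/2}\le P_n[|f_\rho|^{p/2}]$.

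The gap is the step you flag as the main obstacle, the planar bound $\int_\U|f|^{2m}\,d\mu_{m-2}\le D_m^{2m}\|f\|_{h^2(\U)}^{2m}$, and your concrete route to it fails. Optimizing your splitting over $\lambda$ gives $\|f\|_{b^{2m}_{m-2}(\U)}\le\cos t+\sin t$, i.e.\ only the constant $\sqrt2$. After interpolation this gives $2^{n/p}$, which is weaker than the theorem since $D_m<\sqrt2$. No better majorant of $(|a|+|b|)^{2m}$ can help: once $|f|$ is replaced by $|h|+|g|$, the constant $\sqrt2$ is forced. Take $g=h$ with $h(z)=z/(1-az)$, so $g(0)=0$ and $\|h\|_{H^2}^2=(1-a^2)^{-1}$. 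The substitution $z=(a-w)/(1-aw)$ turns $(1-a^2)^m|1-az|^{-2m}\,d\mu_{m-2}(z)$ into $d\mu_{m-2}(w)$, so
\[
\frac{\int_\U(|h|+|g|)^{2m}\,d\mu_{m-2}}{\bigl(\|h\|_{H^2}^2+\|g\|_{H^2}^2\bigr)^m}
=2^m\int_\U\left|\frac{a-w}{1-aw}\right|^{2m}d\mu_{m-2}(w)\longrightarrow2^m
\qquad(a\to1^-).
\]
Your fallback, an argument-dependent extremal majorant, does not say which inequality is proved or how Burbea's inequality is then invoked. As it stands, the proposal does not establish the constant $D_m$.

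The missing ingredient is a comparison that keeps the phases of $h$ and $\overline g$: the reverse Riesz-type inequality of Kalaj, in the form of Chen and Hamada (Theorem~\ref{thm:CH}, inequality \eqref{eq:CH-B}). For $g(0)=0$ it gives $\int_\T|h+\overline g|^{2m}\,dm_1\le B_{2m}^{2m}\int_\T(|h|^2+|g|^2)^m\,dm_1$, with $B_{2m}=\sqrt2\cos(\pi/(4m))=D_m$; this is where the factor $\cos(\pi/(4m))$ comes from. Burbea's inequality only sees moduli of holomorphic (vector-valued) functions, so this phase-sensitive step has to be done separately. Apply it on every circle $|z|=r$ to $h(r\,\cdot)$, $g(r\,\cdot)$ and integrate in $r$ against the radial weight (Lemma~\ref{lem:weighted-B}). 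This gives $\int_\U|f|^{2m}\,d\mu_{m-2}\le D_m^{2m}\int_\U Q_f^{2m}\,d\mu_{m-2}$ with $Q_f=(|h|^2+|g|^2)^{1/2}$. Next, the vector-valued Burbea inequality for the pair $(h,g)$ with $p=2$ (Corollary~\ref{cor:square-Markovic}) gives $\int_\U Q_f^{2m}\,d\mu_{m-2}\le\bigl(\int_\T Q_f^2\,dm_1\bigr)^m$. Your orthogonality identity, applied on each circle, then bounds the right side by $\|f\|_{h^2(\U)}^{2m}$. This is Corollary~\ref{cor:diagonal-Markovic} at $p=2$, where $R_{2,m}=A_2B_{2m}=D_m$. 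With this one-variable input in place, the rest of your argument goes through.
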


For $1<p<\infty$, let $C_{p,m,n}$ denote the best constant in the diagonal inclusion
\[
 \|f\|_{b^{mp}_{\mathbf{m-2}}(\U^n)}
 \leq C_{p,m,n}\|f\|_{h^p(\U^n)}.
\]
For $p>2$, the constant function and the two upper estimates give
\[
 1\leq C_{p,m,n}\leq\Gamma_{p,m,n}.
\]
In particular,
\[
 \lim_{p\to\infty}C_{p,m,n}=1
\]
for fixed $m$ and $n$. When $m=2$ and $n=1$, the high-$p$ branch $D_2^{2/p}$ is precisely the bound obtained by Melentijevi\'c \cite[Section~9]{Melentijevic}.

Our next result is a pluriharmonic analogue of the Pavlovi\'c--Dostani\'c inclusion.

\begin{theorem}\label{thm:intro-PD}
If $f\in h^2(\U^n)$, then the restriction of $f$ to $\B_n$ satisfies
\[
 \|f\|_{h^{2n}(\B_n)}
 \leq
 D_n\|f\|_{h^2(\U^n)}.
\]
\end{theorem}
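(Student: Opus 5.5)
The plan is to reduce the pluriharmonic estimate to the vector-valued Pavlovi\'c--Dostani\'c inclusion, with the loss in the constant controlled by a pointwise elementary inequality that has a pluriharmonic correction term. Write $f=h+\overline g$ with $h,g$ holomorphic and $g(0)=0$.

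\textbf{Step 1: the $h^2(\U^n)$ norm.} On $\T^n$ (for $f_r(z)=f(rz)$, $r<1$, then letting $r\to1$), the functions $h$ and $\overline g$ are orthogonal in $L^2(dm_n)$. Indeed, $\int h g\,dm_n=h(0)g(0)=0$ because $g(0)=0$. Hence
\[
\|f\|_{h^2(\U^n)}^2=\|h\|_{H^2(\U^n)}^2+\|g\|_{H^2(\U^n)}^2=\|(h,g)\|_{H^2(\U^n)}^2 .
\]

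\textbf{Step 2: Pavlovi\'c--Dostani\'c for the pair.} Apply the Pavlovi\'c--Dostani\'c inclusion to the vector-valued holomorphic map $F=(h,g)$. The paper defines such norms with the Euclidean norm, and the scalar proof should carry over to $\C^2$-valued maps. This gives
\[
\int_{\partial\B_n}(|h|^2+|g|^2)^n\,d\sigma_n\le \|f\|_{h^2(\U^n)}^{2n}.
\]
Using the crude bound $|h+\overline g|^2\le2(|h|^2+|g|^2)$ here would already give the constant $\sqrt2$. The refinement to $D_n$ has to come from a cancellation.

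\textbf{Step 3: the pointwise inequality.} I will look for a constant $c=c_n\ge0$ such that, for all $a,b\in\C$,
\[
|a+\overline b|^{2n}\le D_n^{2n}(|a|^2+|b|^2)^n-c\,\mathrm{Re}\big((ab)^n\big). \tag{$\ast$}
\]
The point of the correction term is that $(hg)^n$ is holomorphic and vanishes at the origin. Its mean over $\partial\B_n$ (on spheres $r\partial\B_n$, then $r\to1$) is therefore $h(0)^ng(0)^n=0$. Integrating $(\ast)$ with $a=h$, $b=g$ over $r\partial\B_n$ then gives
\[
\int|f|^{2n}\,d\sigma_n\le D_n^{2n}\int(|h|^2+|g|^2)^n\,d\sigma_n .
\]
Combining this with Step 2 and taking $\sup_r$ gives the theorem.

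\textbf{Step 4: proving $(\ast)$ — the main obstacle.} Both sides of $(\ast)$ are homogeneous of degree $2n$, so normalize $|a|^2+|b|^2=1$. Put $s=2|a||b|\in[0,1]$ and $\psi=\arg(ab)$. Then $|a+\overline b|^2=1+s\cos\psi$ and $\mathrm{Re}((ab)^n)=(s/2)^n\cos(n\psi)$. So $(\ast)$ becomes
\[
(1+s\cos\psi)^n+c'\,s^n\cos(n\psi)\le 2^n\cos^{2n}\!\Big(\frac{\pi}{4n}\Big),\qquad c'=c\,2^{-n},
\]
for all $s\in[0,1]$ and all $\psi$.

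At $s=1$ the left side is $2^n\cos^{2n}(\psi/2)+c'\cos(n\psi)$. I would choose $c'$, in the spirit of Kalaj's sharp Riesz-type arguments, so that the maximum over $\psi$ equals $2^n\cos^{2n}(\pi/4n)$. The natural guess is that the maximum is attained at $\psi=\pm\pi/(2n)$, where $\cos(n\psi)=0$. This requires the derivative to vanish there, which determines $c'$ explicitly. One must then check that this critical point is the global maximum, which I expect to do via a sum-of-cosines or Fej\'er--Riesz-type argument for the trigonometric polynomial in $\psi$.

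Finally, monotonicity in $s$ has to be checked. I would treat the left side as a polynomial in $s$ and show that its maximum over $s\in[0,1]$ is attained at $s=1$ for each $\psi$. Otherwise I would handle $s<1$ by a convexity argument, noting that $s^n$ enters with a factor controlled by $(1+s\cos\psi)^n$. This trigonometric verification, including pinning down the optimal $c$, is the step I expect to be hardest.
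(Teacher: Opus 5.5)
Your architecture is the paper's. Step~1 is its orthogonality identity. Step~2 is Lemma~\ref{lem:PD-vector}; it actually follows at once from the scalar theorem, by applying Minkowski's inequality in $L^n(\sigma_n)$ to $|h|^2+|g|^2$ and then the scalar Pavlovi\'c--Dostani\'c inequality to $h$ and $g$ separately. Step~3 replaces Lemma~\ref{lem:CH-ball}, which the paper simply quotes from Chen--Hamada at $p=2n$. Proving that step by a pointwise inequality is a sound idea. At an even exponent the correction $\mathrm{Re}((hg)^n)$ is pluriharmonic, with spherical mean $\mathrm{Re}((h(0)g(0))^n)=0$, so your route would make the theorem self-contained; the paper's route is shorter but rests on the external theorem. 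However, Step~4 as written is wrong in two places and leaves the key inequality unproved.

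\textbf{The sign of $c$.} $(\ast)$ fails for every $c\ge0$. At $a=b=1/\sqrt2$ the left side is $2^n$, while the right side is at most $D_n^{2n}=2^n\cos^{2n}(\pi/(4n))<2^n$. Put $\theta=\pi/(4n)$ and $\kappa=\cos^{2n-1}\theta\sin\theta$. Your own tangency condition at $\psi=\pi/(2n)$ forces $c'=-2^n\kappa<0$. This is harmless, since the mean of $\mathrm{Re}((hg)^n)$ vanishes whatever the sign.

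\textbf{The reduction to $s=1$.} It cannot be done for each fixed $\psi$: for $n=2$ and $\psi=\pi$, the expression $(1-s)^2+c's^2$ (with the correct $c'<0$) is maximal at $s=0$. Use instead $\zeta=2ab$, so $|\zeta|\le1$ after normalization. The left side becomes $(1+\mathrm{Re}\,\zeta)^n+c'\,\mathrm{Re}(\zeta^n)$, a convex function of $\mathrm{Re}\,\zeta>-1$ plus a harmonic function. It is therefore subharmonic on $\U$, and the maximum principle puts its maximum on $|\zeta|=1$.

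\textbf{The circle inequality.} It remains to show $\cos^{2n}t-\kappa\cos(2nt)\le\cos^{2n}\theta$ for $t=\psi/2\in[0,\pi/2]$. Put $\phi(u)=\cos^{2n-1}u\sin u/\sin(2nu)$. The expansion $2n\cot(2nu)=\sum_{k=0}^{2n-1}\cot(u+k\pi/(2n))$ gives
\[
 (\log\phi)'(u)=\sum_{k=1}^{n-1}
 \frac{2\sin u\,\cos^2(k\pi/(2n))}{\cos u\,\bigl(\sin^2(k\pi/(2n))-\sin^2u\bigr)}\ge0,
 \qquad 0<u<2\theta .
\]
So $\phi$ increases on $(0,2\theta)$, and $\phi(\theta)=\kappa$. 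Write $\cos^{2n}t-\cos^{2n}\theta=2n\int_t^\theta\phi(u)\sin(2nu)\,du$. Using $\phi\le\kappa$ for $t<\theta$, $\phi\ge\kappa$ for $t>\theta$, and $\sin(2nu)\ge0$ on $[0,2\theta]$, you get the inequality on $[0,2\theta]$. On $[2\theta,\pi/2]$ the left side is at most $\cos^{2n}(2\theta)+\kappa$, which is its value at $t=2\theta$ and so is already covered.

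With these repairs your argument is a complete elementary substitute for the Chen--Hamada citation in the case $g(0)=0$.
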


Our final main result concerns the optimal constants at $p=2$. The boundary-concentrating example used in the sharpness discussion leads to the explicit number
\begin{equation}\label{eq:L-definition}
 L_m=
 \left[
 \frac{(4m-2)!((m-1)!)^2}
 {2^m((2m-1)!)^2(2m-2)!}
 \right]^{1/(2m)}.
\end{equation}
It provides a lower bound for the optimal $p=2$ constants which, together with the explicit upper bound $D_m$, determines their limiting behavior uniformly in the dimension.

\begin{theorem}\label{thm:intro-Lm-asymptotic}
For every $m,n\in\N$, $m\geq2$,
\[
 L_m\leq C_{2,m,n}\leq D_m.
\]
Moreover,
\[
 L_m=\sqrt2\left(1-\frac{\log 2}{4m}+o(m^{-1})\right),
 \qquad m\to\infty,
\]
and
\[
 \lim_{m\to\infty}
 \sup_{n\in\N}|C_{2,m,n}-\sqrt2|=0.
\]
\end{theorem}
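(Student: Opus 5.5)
The statement has three parts: the two-sided bound $L_m\le C_{2,m,n}\le D_m$, the asymptotic expansion of $L_m$, and the uniform limit. I would prove them in that order.

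\textbf{Upper bound.} By definition $\Gamma_{2,m,n}=R_{2,m}$, and $|\cos(\pi/2)|=0$, so
\[
R_{2,m}=\sqrt2\cos\!\Bigl(\tfrac{\pi}{4m}\Bigr)=D_m .
\]
The diagonal inequality \eqref{eq:intro-diag} from Theorem~\ref{thm:intro-main} at $p=2$ therefore gives $C_{2,m,n}\le D_m$.

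\textbf{Lower bound.} It suffices to treat $n=1$. If $f$ depends only on $z_1$, then both norms reduce to their one-variable versions, because the remaining factors of $\mu_{\mathbf{m-2}}$ and $m_n$ are probability measures. Hence $C_{2,m,n}\ge C_{2,m,1}$.

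For $n=1$ I would take the boundary-concentrating family
\[
f_a=h_a+\overline{h_a},\qquad h_a(z)=\frac{1}{1-\bar a z},\qquad a=r\to1^-,
\]
that is, twice the real part of the Szegő kernel. This is presumably what produces $L_m$. Here $|f_a|^2=4(\operatorname{Re}h_a)^2$, and $\|f_a\|_{h^2}^2=2\|h_a\|_{H^2}^2+\text{bounded}$, with $\|h_a\|_{H^2}^2=(1-r^2)^{-1}$.

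The key computation is the asymptotic evaluation of
\[
\int_{\U}|2\operatorname{Re}h_a|^{2m}\,d\mu_{m-2}.
\]
Expand $(h+\bar h)^{2m}$ binomially. Only the terms $h^{j}\bar h^{2m-j}$ matter, and by orthogonality of monomials their integrals are
\[
\int_{\U} h^j\,\overline{h}^{\,2m-j}\,d\mu_{m-2}=\sum_k c_k^{(j)}\,c_{k+2j-2m}^{(2m-j)}\,\frac{k!\,(m-1)!}{(k+m-1)!},
\]
where $c^{(j)}$ denotes the Taylor coefficients of $h^j$. After rescaling by $(1-r)^{m}$, the dominant contribution is a Beta-type integral. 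One can also localize near the boundary point via the Cayley map to the upper half-plane and compute exactly with $h=1/(1-\bar a z)\approx 1/(\epsilon - iw)$, yielding a constant times $(1-r)^{-m}$. I expect the resulting sum over $j$ to collapse, via a Vandermonde-type identity, to
\[
\frac{(4m-2)!\,((m-1)!)^2}{((2m-1)!)^2\,(2m-2)!}.
\]
Dividing by $\|f_a\|_{h^2}^{2m}\sim 2^m(1-r^2)^{-m}$ then gives the ratio $\to L_m^{2m}$. This exact identification of the constant is the main obstacle. My first attempt would be the half-plane model, where only the leading binomial terms survive with explicit Gamma integrals; if the bookkeeping fails, I would fall back on simply computing the limit of the ratio in this model and verifying that it matches \eqref{eq:L-definition}.

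\textbf{Asymptotics and the uniform limit.} Apply Stirling to
\[
\log L_m^{2m}=\log\frac{(4m-2)!}{(2m-2)!}-2\log\frac{(2m-1)!}{(m-1)!}-m\log2 .
\]
The exponential terms produce $m\log 4$, which leaves $2m\log\sqrt2$ after subtracting $m\log 2$. The $\sqrt{2\pi k}$ prefactors combine to $\log(1/\sqrt2)+o(1)$, so
\[
\log L_m=\log\sqrt2-\frac{\log 2}{4m}+o(m^{-1}).
\]
Finally, $D_m=\sqrt2\bigl(1-O(m^{-2})\bigr)\to\sqrt2$. Since $L_m\le C_{2,m,n}\le D_m$ with both bounds independent of $n$, the supremum over $n$ of $|C_{2,m,n}-\sqrt2|$ tends to $0$.
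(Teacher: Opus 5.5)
Your upper bound (from $\Gamma_{2,m,n}=R_{2,m}=D_m$), the reduction $C_{2,m,n}\ge C_{2,m,1}$, the Stirling computation and the uniform limit are correct and match the paper.

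The gap is the lower bound $L_m\le C_{2,m,1}$, which is the substantive part of the theorem. Your family is legitimate. Indeed $f_a=2\operatorname{Re}\frac{1}{1-az}=2+2a\,u_a$, where $u_a(z)=\operatorname{Re}\frac{z}{1-az}$ is exactly the paper's test function, and the bounded summand $2$ does not affect the limiting ratio. But you never compute that limit. You only say you ``expect'' the binomial sum to collapse to the right constant and call this the main obstacle, so the value $L_m$ is never derived. In addition, the orthogonality formula you write is wrong. Since $\mu_{m-2}$ is radial, $\int_{\U}z^k\bar z^{\,l}\,d\mu_{m-2}=0$ unless $k=l$. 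So there is no index shift, and the $j$-th integral is $I_j=\sum_k c_k^{(j)}\,\overline{c_k^{(2m-j)}}\,\frac{k!\,(m-1)!}{(k+m-1)!}$.

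Your route does close, and more cheaply than the paper's. The paper rescales $z=1-\varepsilon w$, dominates the integrand on the moving domains $\Omega_\varepsilon$, and evaluates a beta integral $J_m$. In your approach, take $c_k^{(j)}=\binom{k+j-1}{k}a^k$ and $1\le j\le 2m-1$. The summands of $I_j$ are asymptotic to $\frac{(m-1)!}{(j-1)!\,(2m-j-1)!}\,k^{m-1}a^{2k}$. Since $\sum_k k^{m-1}x^k\sim (m-1)!\,(1-x)^{-m}$ as $x\to1^-$, this gives $I_j\sim\frac{((m-1)!)^2}{(j-1)!\,(2m-j-1)!}(1-a^2)^{-m}$, while $I_0=I_{2m}=1$. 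Write $\frac{1}{(j-1)!(2m-j-1)!}=\frac{1}{(2m-2)!}\binom{2m-2}{j-1}$ and use Vandermonde's identity $\sum_j\binom{2m}{j}\binom{2m-2}{j-1}=\binom{4m-2}{2m-1}$. This gives
\[
\int_{\U}|f_a|^{2m}\,d\mu_{m-2}=\sum_{j=0}^{2m}\binom{2m}{j}I_j\sim\frac{(4m-2)!\,((m-1)!)^2}{((2m-1)!)^2\,(2m-2)!}\,(1-a^2)^{-m},
\]
which is precisely your conjectured value. Dividing by $\|f_a\|_{h^2}^{2m}=\bigl(2+2/(1-a^2)\bigr)^m$ gives $L_m^{2m}$.

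The trade-off is this: the coefficient computation is purely algebraic and needs only an Abelian limit. The paper's half-plane scaling requires a dominated-convergence argument but shows the boundary-concentration mechanism directly. Until one of these computations is actually carried out, the lower bound is unproved, and with it the uniform limit.
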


\subsection{Strategy of the Proofs}

Since $f=h+\overline g$, one might try to deduce the pluriharmonic inequalities directly from their holomorphic counterparts. A direct elementary reduction is ineffective because
\[
|h+\overline g|^p
\]
is not logarithmically plurisubharmonic in general.

Our approach is based on a square-function transfer principle. To a normalized decomposition $f=h+\overline g$, we associate the square function
\[
Q_f(z)=\bigl(|h(z)|^2+|g(z)|^2\bigr)^{1/2}.
\]
The function $Q_f$ is the Euclidean norm of the vector-valued holomorphic pair $(h,g)$. Sharp Riesz-type inequalities of Kalaj \cite{Kalaj2019}, extended to higher dimensions by Chen and Hamada \cite{ChenHamada}, compare $Q_f$ quantitatively with $f$ on the boundary. This is the primary technical tool of the paper.

The high-$p$ estimate is obtained differently. We iterate the explicit one-variable $h^2\to b^{2m}_{m-2}$ Poisson estimate over the coordinates and then use positivity and Jensen's inequality. This avoids the factor arising from the Riesz comparison, which grows linearly as $p\to\infty$.

To obtain lower bounds for the optimal $p=2$ constants, we construct a boundary-concentrating example for every $m\geq2$. A half-plane scaling and beta-integral calculation give the explicit constant in Theorem \ref{thm:intro-Lm-asymptotic}.

\subsection{Organization}
The paper is organized as follows. Section 2 presents the Riesz-type estimates
needed in the sequel and proves their weighted interior form. Section 3 gives a
self-contained proof of a vector-valued holomorphic polydisc inequality used for
the square functions. Section 4 proves the main polydisc inequality, the high-$p$
Poisson estimate, and its multilinear consequences. Section 5
proves the ball--polydisc inclusion and a corresponding volume corollary.
Section 6 records partial results for the case $0<p\leq1$ under square-function
or split Hardy hypotheses. In Section 7 we prove explicit lower bounds for the
optimal $p=2$ constants, derive their limiting consequences, and conclude with
a discussion of the remaining sharpness problem.

\section{Riesz-Type Estimates on the Polydisc}

We shall use the constants
\[
 A_p=\frac{1}{\sqrt{1-|\cos(\pi/p)|}},
 \qquad
 B_p=\sqrt2\max\left\{
 \sin\left(\frac{\pi}{2p}\right),
 \cos\left(\frac{\pi}{2p}\right)
 \right\},
 \qquad 1<p<\infty.
\]
For $p\geq2$,
\[
 B_p=\sqrt2\cos\left(\frac{\pi}{2p}\right).
\]
Consequently, the constants introduced in Section~1 satisfy
\[
 R_{p,m}=A_pB_{mp},
 \qquad
 D_m=B_{2m}.
\]

Chen and Hamada \cite[Theorem~2.1]{ChenHamada} extended the sharp disc inequalities of Kalaj \cite{Kalaj2019} to bounded symmetric domains. We quote the polydisc form required in Sections 2--4. The unit-ball form of the same theorem, with the same constants and sign conditions, is used in Section 5.

\begin{theorem}\label{thm:CH}
Let $1<p<\infty$, and suppose that
\[
 f=h+\overline g\in h^p(\U^n),
\]
where $h$ and $g$ are holomorphic in $\U^n$.
If
\[
 \operatorname{Re}(g(0)h(0))\geq0,
\]
then
\begin{equation}\label{eq:CH-A}
 \lim_{r\to1^-}
 \left(
 \int_{\T^n}(|h(r\omega)|^2+|g(r\omega)|^2)^{p/2}\,dm_n(\omega)
 \right)^{1/p}
 \leq A_p\|f\|_{h^p(\U^n)}.
\end{equation}
If
\[
 \operatorname{Re}(g(0)h(0))\leq0,
\]
then
\begin{equation}\label{eq:CH-B}
 \|f\|_{h^p(\U^n)}
 \leq
 B_p
 \lim_{r\to1^-}
 \left(
 \int_{\T^n}(|h(r\omega)|^2+|g(r\omega)|^2)^{p/2}\,dm_n(\omega)
 \right)^{1/p}.
\end{equation}
\end{theorem}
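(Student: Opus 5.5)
We would prove Theorem~\ref{thm:CH} by slicing the polydisc into complex lines through the origin. This reduces both inequalities to the one-variable sharp Riesz-type inequalities of Kalaj \cite{Kalaj2019}. We take those in the following form. If $u=H+\overline G$ is harmonic in a neighbourhood of $\overline{\U}$ and $\operatorname{Re}(G(0)H(0))\geq0$, then
\[
\int_{\T}(|H|^2+|G|^2)^{p/2}\,dm_1\leq A_p^p\int_{\T}|u|^p\,dm_1 .
\]
If instead $\operatorname{Re}(G(0)H(0))\leq0$, then
\[
\int_{\T}|u|^p\,dm_1\leq B_p^p\int_{\T}(|H|^2+|G|^2)^{p/2}\,dm_1 .
\]
This is the $n=1$ case of the statement (and of \cite[Theorem~2.1]{ChenHamada}).

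\textbf{Step 1: slices.} Fix $0<r<1$ and $\omega\in\T^n$. Define the slice functions
\[
h_\omega(\zeta)=h(r\zeta\omega),\qquad g_\omega(\zeta)=g(r\zeta\omega),\qquad \zeta\in\U .
\]
These are holomorphic in a neighbourhood of $\overline{\U}$. The function $f_\omega=h_\omega+\overline{g_\omega}$ is harmonic there and equals $f(r\zeta\omega)$. The key observation is that $h_\omega(0)=h(0)$ and $g_\omega(0)=g(0)$. Hence the sign hypothesis on $\operatorname{Re}(g(0)h(0))$ passes to every slice. The decomposition may not satisfy $g_\omega(0)=0$, but the disc inequalities are stated for the pair $(H,G)$ under exactly this sign condition, so this is harmless.

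\textbf{Step 2: averaging.} Normalized Haar measure on $\T^n$ is invariant under the diagonal rotations $\omega\mapsto e^{i\theta}\omega$. Therefore, for any continuous $\Phi$ on $\T^n$,
\[
\int_{\T^n}\Phi\,dm_n=\int_{\T^n}\int_{\T}\Phi(e^{i\theta}\omega)\,dm_1(e^{i\theta})\,dm_n(\omega).
\]
We apply the disc inequality to each slice and integrate over $\omega$. In the case $\operatorname{Re}(g(0)h(0))\geq0$ this gives
\[
\int_{\T^n}(|h(r\omega)|^2+|g(r\omega)|^2)^{p/2}\,dm_n(\omega)\leq A_p^p\int_{\T^n}|f(r\omega)|^p\,dm_n(\omega)\leq A_p^p\|f\|_{h^p(\U^n)}^p .
\]
In the opposite case it gives the reverse comparison with constant $B_p^p$, for each fixed $r$.

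\textbf{Step 3: the limit $r\to1^-$.} The function $Q=(|h|^2+|g|^2)^{1/2}$ is the norm of a holomorphic vector, so $Q^p$ is plurisubharmonic. Its integral means over $r\T^n$ are therefore nondecreasing in $r$, and the limits in \eqref{eq:CH-A} and \eqref{eq:CH-B} exist in $[0,\infty]$. Inequality \eqref{eq:CH-A} then follows by letting $r\to1^-$ in Step 2. For \eqref{eq:CH-B} we take the supremum over $r$ on the left: $\int_{\T^n}|f(r\cdot)|^p\,dm_n$ is bounded by $B_p^p$ times the means of $Q^p$ at radius $r$, and these increase to the limit.

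\textbf{Main obstacle.} The main point requiring care is confirming that Kalaj's disc theorem is available with exactly these sign conditions and for non-normalized pairs $(H,G)$. If only the normalized version $G(0)=0$ is available, we would instead reprove the disc case along Kalaj's lines. That approach uses an explicit function $\Psi$ on $\C^2$ that is plurisubharmonic in $(w,z)$, satisfies
\[
\Psi(w,z)\leq A_p^p|w+\overline z|^p-(|w|^2+|z|^2)^{p/2},
\]
and satisfies $\Psi(w,z)\geq0$ when $\operatorname{Re}(wz)\geq0$ (and analogously for $B_p$). With such a $\Psi$, the sub-mean-value property of $\Psi(h,g)$ on $r\T^n$ gives the polydisc inequality directly, even without slicing.
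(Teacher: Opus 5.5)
Your argument is correct, and it goes further than the paper, which gives no proof of Theorem~\ref{thm:CH}. The paper simply quotes the polydisc case of Chen and Hamada's theorem for bounded symmetric domains \cite[Theorem~2.1]{ChenHamada}, which it describes as an extension of Kalaj's disc inequalities.

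You show instead that the polydisc statement follows formally from its own case $n=1$. The diagonal discs $\zeta\mapsto r\zeta\omega$, $\omega\in\T^n$, are centred at the origin, so $h_\omega(0)=h(0)$ and $g_\omega(0)=g(0)$ and the sign condition is inherited exactly. Their boundary circles lie in $r\T^n$. The invariance of $m_n$ under $\omega\mapsto e^{i\theta}\omega$, together with Fubini, then reassembles the slice inequalities into the torus inequalities at each fixed $r$. You handle the radial limits correctly through the monotonicity of the torus means of the plurisubharmonic function $Q^p$, and that monotonicity itself follows from the same slicing.

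Your route buys transparency and economy: nothing beyond the disc theorem is needed. The identical argument, with $\sigma_n$ in place of $m_n$ and slices $\lambda\mapsto F(\lambda\xi)$, $\xi\in\partial\B_n$, also yields the ball form quoted in Lemma~\ref{lem:CH-ball}. The citation buys generality, covering all bounded symmetric domains at once, and avoids checking the disc theorem in its non-normalized form.

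On that last point, the fallback in your ``main obstacle'' paragraph is unnecessary for every application in the paper. There one always has $g(0)=0$, so the slices satisfy $g_\omega(0)=0$. Hence even a disc theorem proved only under the normalization $G(0)=0$ would give, via your slicing, the normalized polydisc and ball estimates the paper actually uses.
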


In our applications, the normalization $g(0)=0$ makes both sign conditions available. To use the boundary comparisons inside weighted Bergman integrals, we apply them to anisotropic dilations on the internal tori of the polydisc and then integrate over the radii. This gives the following two-sided weighted form.

\begin{lemma}\label{lem:weighted-B}
Let $1<q<\infty$ and let $\boldsymbol\alpha=(\alpha_1,\ldots,\alpha_n)>-\mathbf1$. Suppose $f=h+\overline g$ is pluriharmonic in $\U^n$ with $g(0)=0$. Define
\[
 Q_f(z)=\bigl(|h(z)|^2+|g(z)|^2\bigr)^{1/2}.
\]
Then
\begin{equation}\label{eq:weighted-A}
 A_q^{-q}\int_{\U^n}Q_f(z)^q\,d\mu_{\boldsymbol\alpha}(z)
 \leq
 \int_{\U^n}|f(z)|^q\,d\mu_{\boldsymbol\alpha}(z),
\end{equation}
and
\begin{equation}\label{eq:weighted-B}
 \int_{\U^n}|f(z)|^q\,d\mu_{\boldsymbol\alpha}(z)
 \leq
 B_q^q
 \int_{\U^n}Q_f(z)^q\,d\mu_{\boldsymbol\alpha}(z),
\end{equation}
whenever the respective right-hand sides are finite.
\end{lemma}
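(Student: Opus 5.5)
Fix a radius vector $\mathbf r=(r_1,\ldots,r_n)\in[0,1)^n$ and consider the anisotropic dilation
\[
 f_{\mathbf r}(z)=f(r_1z_1,\ldots,r_nz_n)=h_{\mathbf r}(z)+\overline{g_{\mathbf r}(z)},
\]
which is pluriharmonic on a neighbourhood of $\overline{\U^n}$, where $h_{\mathbf r}$ and $g_{\mathbf r}$ denote the corresponding dilations of $h$ and $g$. Since $g_{\mathbf r}(0)=g(0)=0$, both sign conditions $\operatorname{Re}(g_{\mathbf r}(0)h_{\mathbf r}(0))\ge0$ and $\le0$ hold. Theorem~\ref{thm:CH} therefore applies to $f_{\mathbf r}$ in both directions.

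Because $f_{\mathbf r}$, $h_{\mathbf r}$ and $g_{\mathbf r}$ are continuous on $\overline{\U^n}$, the limit in \eqref{eq:CH-A}--\eqref{eq:CH-B} is simply the integral over $\T^n$ of $Q_f(r_1\omega_1,\ldots,r_n\omega_n)^q$. The $h^q$-norm of $f_{\mathbf r}$ is a supremum over $\rho<1$ of integral means of $f$ on the tori $\rho\mathbf r\T^n$. Since $|f|^q$ is $n$-subharmonic, these means increase in $\rho$ (separately in each variable), so the supremum equals $\int_{\T^n}|f(\mathbf r\omega)|^q\,dm_n(\omega)$ with $\mathbf r\omega=(r_1\omega_1,\ldots,r_n\omega_n)$. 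We thus obtain, for every $\mathbf r$,
\[
 A_q^{-q}\int_{\T^n}Q_f(\mathbf r\omega)^q\,dm_n(\omega)\le\int_{\T^n}|f(\mathbf r\omega)|^q\,dm_n(\omega)\le B_q^q\int_{\T^n}Q_f(\mathbf r\omega)^q\,dm_n(\omega).
\]
One could also avoid the monotonicity argument by citing that, for pluriharmonic functions continuous up to the closed polydisc, the $h^q$-norm is the boundary $L^q$ norm. This follows from the Poisson representation and Jensen's inequality: $|f(\rho\mathbf r\omega)|^q$ is bounded by the Poisson integral of $|f(\mathbf r\,\cdot)|^q$, and that Poisson integral has mean equal to the boundary mean.

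Next, integrate over radii in polar coordinates in each variable:
\[
 d\mu_{\boldsymbol\alpha}(z)=\prod_{k=1}^n 2(\alpha_k+1)(1-r_k^2)^{\alpha_k}r_k\,dr_k\,\frac{d\theta_k}{2\pi}.
\]
The integrand $|f|^q$ or $Q_f^q$ is nonnegative and continuous, so Tonelli's theorem gives
\[
 \int_{\U^n}|f|^q\,d\mu_{\boldsymbol\alpha}=\int_{[0,1)^n}\Bigl(\int_{\T^n}|f(\mathbf r\omega)|^q\,dm_n\Bigr)\,d\lambda(\mathbf r)
\]
for a positive measure $\lambda$ on $[0,1)^n$, and similarly for $Q_f$. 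Integrating the two pointwise-in-$\mathbf r$ inequalities against $\lambda$ yields \eqref{eq:weighted-A} and \eqref{eq:weighted-B}. The finiteness hypotheses are only needed to make the statements meaningful; since everything is nonnegative, the inequalities hold in $[0,\infty]$ regardless.

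The main point to check carefully is that Theorem~\ref{thm:CH} applies to $f_{\mathbf r}$. We need $f_{\mathbf r}\in h^q(\U^n)$, which is clear because $f_{\mathbf r}$ is bounded on $\U^n$. We also need the identification of $\|f_{\mathbf r}\|_{h^q}$ with the boundary integral on the torus $\mathbf r\T^n$, handled above via $n$-subharmonicity. A minor subtlety is the case $r_k=0$ for some $k$, where the torus degenerates. This is a $\lambda$-null set, so it can be ignored, or it can be handled directly because the dilation is still well defined.
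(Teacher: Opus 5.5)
Your proposal is correct and follows essentially the paper's argument. You use anisotropic dilations $f_{\mathbf r}$, get both Chen--Hamada inequalities from $g_{\mathbf r}(0)=0$, identify $\|f_{\mathbf r}\|_{h^q}$ with the torus integral via separate subharmonicity of $|f|^q$, and integrate the fixed-radii estimates against the polar form of $d\mu_{\boldsymbol\alpha}$. The only cosmetic difference is that you integrate over all of $[0,1)^n$ at once by Tonelli, whereas the paper integrates over $0<r_k<\rho$ and then lets $\rho\to1^-$ by monotone convergence; both are valid.
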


\begin{proof}

Fix $0<\rho<1$. For $\boldsymbol r=(r_1,\ldots,r_n)$ with $0<r_k<\rho$, define
\[
 f_{\boldsymbol r}(\zeta)=f(r_1\zeta_1,\ldots,r_n\zeta_n),
 \qquad \zeta\in\U^n.
\]
Then
\[
 f_{\boldsymbol r}=h_{\boldsymbol r}+\overline{g_{\boldsymbol r}},
\]
where
\[
 h_{\boldsymbol r}(\zeta)=h(r_1\zeta_1,\ldots,r_n\zeta_n),
 \qquad
 g_{\boldsymbol r}(\zeta)=g(r_1\zeta_1,\ldots,r_n\zeta_n).
\]
Since $g(0)=0$, we also have $g_{\boldsymbol r}(0)=0$. Therefore,
\[
 \operatorname{Re}(g_{\boldsymbol r}(0)h_{\boldsymbol r}(0))=0,
\]
	so both sign conditions in Theorem \ref{thm:CH} are satisfied.

	Moreover, because each $r_k<\rho<1$, the functions $h_{\boldsymbol r}$ and $g_{\boldsymbol r}$ are holomorphic in a neighbourhood of the closed polydisc. The radial limits in \eqref{eq:CH-A} and \eqref{eq:CH-B} are therefore the corresponding boundary integrals on $\T^n$. Since the real and imaginary parts of $f_{\boldsymbol r}$ are separately harmonic and $w\mapsto|w|^q$ is convex for $q>1$, the function $|f_{\boldsymbol r}|^q$ is separately subharmonic. Successive applications of the one-variable submean inequality show that its torus means are nondecreasing in each radius. Since $f_{\boldsymbol r}$ is continuous on $\overline{\U^n}$, it follows that
\[
 \|f_{\boldsymbol r}\|_{h^q(\U^n)}^q
 =\int_{\T^n}|f_{\boldsymbol r}(\omega)|^q\,dm_n(\omega).
\]
Applying both parts of Theorem \ref{thm:CH} to $f_{\boldsymbol r}$ now gives the fixed-radii estimates
\[
 A_q^{-q}\int_{\T^n}
 \bigl(|h_{\boldsymbol r}(\omega)|^2+|g_{\boldsymbol r}(\omega)|^2\bigr)^{q/2}
 \,dm_n(\omega)
 \leq
 \int_{\T^n}|f_{\boldsymbol r}(\omega)|^q\,dm_n(\omega)
\]
and
\[
 \int_{\T^n}
 |f(r_1\omega_1,\ldots,r_n\omega_n)|^q\,dm_n(\omega)
 \leq
 B_q^q
 \int_{\T^n}
 Q_f(r_1\omega_1,\ldots,r_n\omega_n)^q\,dm_n(\omega).
\]

We now integrate the two fixed-radii estimates over the radii. In one variable, if $z_k=r_ke^{i\theta_k}$ and $dm_1(e^{i\theta_k})=d\theta_k/(2\pi)$, then
\[
 d\mu_{\alpha_k}(z_k)
 =2(\alpha_k+1)(1-r_k^2)^{\alpha_k}r_k\,dr_k\,dm_1(e^{i\theta_k}).
\]
Therefore, on the polydisc,
\[
 d\mu_{\boldsymbol\alpha}(z)
 =\prod_{k=1}^n
 2(\alpha_k+1)(1-r_k^2)^{\alpha_k}r_k\,dr_k\,dm_1(e^{i\theta_k}).
\]
Multiplying the fixed-radii estimates by
\[
 \prod_{k=1}^n 2(\alpha_k+1)(1-r_k^2)^{\alpha_k}r_k\,dr_k
\]
and integrating over $0<r_k<\rho$, $1\leq k\leq n$, gives
\[
 A_q^{-q}\int_{\rho\U^n}Q_f(z)^q\,d\mu_{\boldsymbol\alpha}(z)
 \leq
 \int_{\rho\U^n}|f(z)|^q\,d\mu_{\boldsymbol\alpha}(z)
 \leq
 B_q^q
 \int_{\rho\U^n}Q_f(z)^q\,d\mu_{\boldsymbol\alpha}(z).
\]
Finally, the domains $\rho\U^n$ increase to $\U^n$ as $\rho\to1^-$. Since the integrands are non-negative, monotone convergence yields \eqref{eq:weighted-A} and \eqref{eq:weighted-B}.
\end{proof}

Writing $H_f=(h,g)$, Lemma \ref{lem:weighted-B} gives the norm equivalence
\[
 A_q^{-1}\|H_f\|_{A^q_{\boldsymbol\alpha}(\U^n;\C^2)}
 \leq \|f\|_{b^q_{\boldsymbol\alpha}(\U^n)}
 \leq B_q\|H_f\|_{A^q_{\boldsymbol\alpha}(\U^n;\C^2)}.
\]
Thus $f\in b^q_{\boldsymbol\alpha}(\U^n)$ if and only if the pair $H_f$ belongs to $A^q_{\boldsymbol\alpha}(\U^n;\C^2)$.

\section{A Vector-Valued Polydisc Inequality}

In this section, we prove the Banach space-valued form of the polydisc inequality needed for the holomorphic pair $(h,g)$. It is the natural Banach space extension of Markovi\'c's $\C^N$-valued theorem \cite[Theorem~2.7]{Markovic}, which was stated without a proof. For the convenience of the reader, and for completeness, we include a self-contained argument. It uses only one-variable logarithmic subharmonicity, Burbea's disc inequality, and induction on the number of variables.

A non-negative function \(u\) in a domain \(D\subset\C\) is called logarithmically subharmonic if either \(u\equiv0\), or \(\log u\) is subharmonic in \(D\). The induction below requires logarithmic subharmonicity of the norm of a Banach space-valued holomorphic function. We record the elementary fact needed for this purpose.

\begin{lemma}\label{lem:banach-logsubharmonic}
	Let \(X\) be a complex Banach space and let \(F:D\to X\) be holomorphic.  Then \(z\mapsto \|F(z)\|\) is logarithmically subharmonic in \(D\).  Consequently, for every \(p>0\), the function \(z\mapsto \|F(z)\|^p\) is logarithmically subharmonic.
\end{lemma}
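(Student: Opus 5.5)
The plan is to realize $\log\|F(z)\|$ as a supremum of the logarithms of moduli of scalar holomorphic functions, using the Hahn--Banach theorem. For each functional $\lambda$ in the closed unit ball $X^*_1$ of the dual space, the scalar function $\lambda\circ F$ is holomorphic in $D$. Here holomorphy of $F$ is understood in the strong sense, or equivalently in the weak sense, which is all that is needed. Hence $\log|\lambda\circ F|$ is subharmonic in $D$, with the value $-\infty$ allowed.

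By Hahn--Banach,
\[
 \|F(z)\|=\sup_{\lambda\in X^*_1}|\lambda(F(z))|,
\]
so
\[
 \log\|F(z)\|=\sup_{\lambda\in X^*_1}\log|\lambda(F(z))|
\]
is the upper envelope of a family of subharmonic functions. Since $F$ is holomorphic, it is continuous, and therefore $z\mapsto\log\|F(z)\|$ is upper semicontinuous as a map into $[-\infty,\infty)$. This is the key point: an arbitrary supremum of subharmonic functions is subharmonic as soon as it is upper semicontinuous. To verify this directly, fix a closed disc $\overline{\Delta(a,r)}\subset D$. For each $\lambda$ we have
\[
 \log|\lambda(F(a))|\le\frac1{2\pi}\int_0^{2\pi}\log|\lambda(F(a+re^{it}))|\,dt\le\frac1{2\pi}\int_0^{2\pi}\log\|F(a+re^{it})\|\,dt .
\]
Taking the supremum over $\lambda$ gives the sub-mean value inequality for $\log\|F\|$. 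Together with upper semicontinuity, this yields subharmonicity.

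One subtlety remains, which I expect to be the only real obstacle: $\log\|F\|$ might be identically $-\infty$ on a component without $F\equiv0$ on all of $D$. By the definition adopted in the paper, $\log u$ subharmonic allows the value $-\infty$ but not identically $-\infty$ on a connected domain. I will assume $D$ is connected, as "domain" indicates. If $\|F\|\equiv0$, then $u\equiv0$ falls under the first alternative of the definition. Otherwise $F(z_0)\ne0$ for some $z_0$, so $\log\|F(z_0)\|>-\infty$ and the envelope is not identically $-\infty$; hence it is a genuine subharmonic function.

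Finally, for $p>0$, if $F\not\equiv0$ then $\log(\|F\|^p)=p\log\|F\|$ is a positive multiple of a subharmonic function and is therefore subharmonic. If $F\equiv0$, then $\|F\|^p\equiv0$, which is again covered by the first alternative of the definition. This proves the consequence stated in Lemma~\ref{lem:banach-logsubharmonic}.
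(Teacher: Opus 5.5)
Your proof is correct and follows essentially the same route as the paper: Hahn--Banach together with the subharmonicity of $\log|\lambda\circ F|$ gives the sub-mean inequality for $\log\|F\|$, and continuity of $F$ gives upper semicontinuity. The only differences are cosmetic. The paper picks a norming functional at each centre rather than taking the full envelope. It also verifies local integrability directly, using a fixed $\Lambda_0$ with $\Lambda_0\circ F\not\equiv0$, whereas you invoke the standard fact that an upper semicontinuous function satisfying the sub-mean inequality on a connected domain, and not identically $-\infty$, is subharmonic.
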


\begin{proof}
	If \(F\equiv0\), there is nothing to prove.  Assume that \(F\not\equiv0\).  Fix a closed disc \(\overline{D(a,r)}\subset D\).  If \(F(a)=0\), then the submean inequality for \(\log\|F\|\) at \(a\) is immediate, since the left-hand side is \(-\infty\).  Suppose \(F(a)\neq0\).  By the Hahn--Banach theorem, there exists a continuous linear functional \(\Lambda\in X^*\) with \(\|\Lambda\|=1\) such that
	\[
	\Lambda(F(a))=\|F(a)\|.
	\]
	The scalar function \(\Lambda\circ F\) is holomorphic.  Therefore \(\log|\Lambda(F(z))|\) is subharmonic, and hence
	\[
	\log\|F(a)\|
	=
	\log|\Lambda(F(a))|
	\leq
	\frac1{2\pi}\int_0^{2\pi}
	\log|\Lambda(F(a+re^{it}))|\,dt.
	\]
	Since \(|\Lambda(F(w))|\leq\|F(w)\|\), we obtain
	\[
	\log\|F(a)\|
	\leq
	\frac1{2\pi}\int_0^{2\pi}
	\log\|F(a+re^{it})\|\,dt.
	\]
	Thus \(\log\|F\|\) satisfies the submean inequality. It is also locally integrable. Indeed, choose $\Lambda_0\in X^*$ such that $\Lambda_0\circ F\not\equiv0$. Then $\log\|F\|\geq\log|\Lambda_0\circ F|-\log\|\Lambda_0\|$, while $\log\|F\|$ is locally bounded above. Since it is upper semicontinuous, $\log\|F\|$ is subharmonic. Multiplying by \(p>0\) gives the assertion for \(\|F\|^p\).
\end{proof}

We next extend Burbea's one-variable estimate from moduli of holomorphic functions to continuous logarithmically subharmonic functions. This provides the base case of the polydisc induction.

\begin{lemma}\label{lem:one-variable-log}
	Let \(m\in\N\), \(m\geq2\).  Suppose \(u_1,\ldots,u_m\) are continuous non-negative logarithmically subharmonic functions in a neighbourhood of \(\overline\U\).  Then
	\begin{equation}\label{eq:one-variable-log}
		\int_{\U}\prod_{j=1}^m u_j(z)\,d\mu_{m-2}(z)
		\leq
		\prod_{j=1}^m\int_{\T}u_j(\zeta)\,dm_1(\zeta).
	\end{equation}
\end{lemma}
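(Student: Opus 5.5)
The plan is to reduce Lemma \ref{lem:one-variable-log} to Burbea's inequality \eqref{eq:Burbea-disk}, via the standard fact that a continuous logarithmically subharmonic function on $\overline\U$ is dominated by the modulus of an outer function with the same boundary values. If some $u_j\equiv0$, both sides vanish, so assume none is identically zero.

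First fix $j$. Since $u_j$ is continuous on $\overline\U$ and $\log u_j$ is subharmonic, we have $\log u_j(\zeta)<+\infty$ on $\T$, and $\log u_j|_\T$ is bounded above. Suppose first that $\log u_j|_\T\in L^1(\T)$, and let $P[\log u_j]$ denote its Poisson integral. The function $\log u_j$ is subharmonic on a neighbourhood of $\overline\U$ and continuous with values in $[-\infty,\infty)$. By the maximum principle for subharmonic functions, applied to $\log u_j-P[\log u_j]$ (for instance on discs $r\U$ with $r\to1$, or directly using upper semicontinuity at the boundary), we get $\log u_j\le P[\log u_j]$ in $\U$. Now define the outer function
\[
F_j(z)=\exp\Bigl(\int_\T\frac{\zeta+z}{\zeta-z}\log u_j(\zeta)\,dm_1(\zeta)\Bigr).
\]
Then $|F_j|=\exp P[\log u_j]\ge u_j$ in $\U$. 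Since $u_j|_\T$ is bounded, $F_j\in H^\infty$, and its radial limits satisfy $|F_j|=u_j$ a.e.\ on $\T$. Hence $\int_\T|F_j|\,dm_1=\int_\T u_j\,dm_1$.

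If instead $\int_\T\log u_j\,dm_1=-\infty$, the submean inequality gives $\log u_j(z)\le P[\log u_j](z)=-\infty$ for all $z$, so $u_j\equiv0$ in $\U$. In that case the left side of \eqref{eq:one-variable-log} is zero and the inequality is trivial. To be safe, one can also avoid this case by replacing $u_j$ with $u_j+\varepsilon$, which is still continuous and log-subharmonic because a sum of log-subharmonic functions is log-subharmonic, and then letting $\varepsilon\to0$ by monotone convergence.

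Next, with $F_1,\dots,F_m\in H^1(\U)$ and $u_j\le|F_j|$, it remains to show
\[
\int_\U\prod_j|F_j|\,d\mu_{m-2}\le\prod_j\int_\T|F_j|\,dm_1.
\]
This is the multilinear Burbea inequality, namely \eqref{eq:Markovic-polydisc} with $n=1$ and $p_j=1$, which is quoted earlier in the paper. If one prefers to use only the diagonal \eqref{eq:Burbea-disk}, the same conclusion follows as follows. Since the $F_j$ are outer, hence zero-free, put $G_j=F_j/\|F_j\|_{H^1}$. Apply H\"older's inequality with exponents $m$ to $\prod|G_j|$, and then apply \eqref{eq:Burbea-disk} with $p=1$ to each $G_j$. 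This gives $\int_\U\prod_j|G_j|\,d\mu_{m-2}\le\prod_j\|G_j\|_{A^m_{m-2}}\le\prod_j\|G_j\|_{H^1}=1$, and rescaling finishes the proof.

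The main obstacle is the domination step $u_j\le|F_j|$ with equality of boundary integrals. It requires the comparison $\log u_j\le P[\log u_j]$ for an upper semicontinuous subharmonic function that may take the value $-\infty$ on parts of $\T$, together with the identification $|F_j^*|=u_j$ a.e. Both points are handled by working with $u_j+\varepsilon$, whose logarithm is continuous and finite on $\overline\U$, and then passing to the limit.
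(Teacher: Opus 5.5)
Your proposal is correct and follows essentially the same route as the paper: you dominate each $u_j$ by the modulus of an outer function, combine H\"older's inequality with Burbea's inequality \eqref{eq:Burbea-disk} at $p=1$, and remove an $\varepsilon$-regularization at the end. The only difference is where the regularization sits. The paper keeps $\log u_j$ as the subharmonic function and compares it, via the maximum principle, with the harmonic extension of the continuous boundary majorant $\log(u_j+\varepsilon)$. This sidesteps both the Poisson majorization for boundary data that may take the value $-\infty$ and the fact that $u_j+\varepsilon$ is again logarithmically subharmonic.
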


\begin{proof}
	If one of the functions \(u_j\) is identically zero, the assertion is trivial.  Therefore, assume that none of them is identically zero.  Fix \(\varepsilon>0\).  Since each \(u_j\) is continuous and logarithmically subharmonic in a neighbourhood of \(\overline\U\), the boundary function \(\log(u_j+\varepsilon)\) is continuous and integrable on \(\T\).  Let \(O_{j,\varepsilon}\) be the outer function in \(\U\) with the boundary modulus
	\[
	|O_{j,\varepsilon}(\zeta)|=u_j(\zeta)+\varepsilon,
	\qquad \zeta\in\T.
	\]
	Explicitly,
	\[
	O_{j,\varepsilon}(z)=
	\exp\left\{
	\int_{\T}\frac{\zeta+z}{\zeta-z}\log(u_j(\zeta)+\varepsilon)\,dm_1(\zeta)
	\right\}.
	\]
	Since $\log(u_j+\varepsilon)$ is bounded on $\T$, the function $O_{j,\varepsilon}$ belongs to $H^\infty(\U)$.
	Since \(\log u_j\) is subharmonic and \(\log u_j\leq \log(u_j+\varepsilon)\) on \(\T\), the maximum principle for subharmonic functions gives
	\[
	\log u_j(z)
	\leq
	\int_{\T}P_z(\zeta)\log(u_j(\zeta)+\varepsilon)\,dm_1(\zeta)
	=
	\log|O_{j,\varepsilon}(z)|,
	\]
	where \(P_z\) is the Poisson kernel.  Hence
	\begin{equation}\label{eq:outer-majorant}
		u_j(z)\leq |O_{j,\varepsilon}(z)|,
		\qquad z\in\U.
	\end{equation}

	We now use Burbea's one-variable weighted inequality.  For a holomorphic function \(G\in H^1(\U)\), inequality \eqref{eq:Burbea-disk} with \(p=1\) gives
	\[
	\int_{\U}|G(z)|^m\,d\mu_{m-2}(z)
	\leq
	\left(\int_{\T}|G(\zeta)|\,dm_1(\zeta)\right)^m.
	\]
	Applying H\"older's inequality to the product of the functions \(O_{j,\varepsilon}\), we get
	\[
	\int_{\U}\prod_{j=1}^m |O_{j,\varepsilon}(z)|\,d\mu_{m-2}(z)
	\leq
	\prod_{j=1}^m
	\left(\int_{\U}|O_{j,\varepsilon}(z)|^m\,d\mu_{m-2}(z)\right)^{1/m}.
	\]
	By Burbea's inequality, each factor is bounded by
	\[
	\int_{\T}|O_{j,\varepsilon}(\zeta)|\,dm_1(\zeta)
	=
	\int_{\T}\bigl(u_j(\zeta)+\varepsilon\bigr)\,dm_1(\zeta).
	\]
	Together with \eqref{eq:outer-majorant}, this yields
	\[
	\int_{\U}\prod_{j=1}^m u_j(z)\,d\mu_{m-2}(z)
	\leq
	\prod_{j=1}^m
	\int_{\T}\bigl(u_j(\zeta)+\varepsilon\bigr)\,dm_1(\zeta).
	\]
	Letting \(\varepsilon\to0^+\) proves \eqref{eq:one-variable-log}.
\end{proof}

The next proposition is the Banach space-valued polydisc inequality needed for the square functions. Its constant is inherited from the scalar holomorphic inequality.

\begin{proposition}\label{prop:vector-polydisc}
	Let \(m\in\N\), \(m\geq2\), \(1\leq p_1,
	\ldots,p_m<\infty\), and let \(X_1,
		\ldots,X_m\) be complex Banach spaces. Suppose that \(F_j:\U^n\to X_j\) is holomorphic and
	\[
	\sup_{0<r<1}\int_{\T^n}\|F_j(r\omega)\|_{X_j}^{p_j}\,dm_n(\omega)<\infty,
	\qquad j=1,
	\ldots,m.
	\]
	Then
	\begin{equation}\label{eq:banach-polydisc}
		\int_{\U^n}\prod_{j=1}^m\|F_j(z)\|_{X_j}^{p_j}\,d\mu_{\mathbf{m-2}}(z)
		\leq
		\prod_{j=1}^m
		\sup_{0<r<1}\int_{\T^n}\|F_j(r\omega)\|_{X_j}^{p_j}\,dm_n(\omega).
	\end{equation}
	If, for each $j$, there is a radial boundary function $F_j^*\in L^{p_j}(\T^n;X_j)$ such that $F_j(r\,\cdot)\to F_j^*$ in $L^{p_j}$ as $r\to1^-$, then the right-hand side is
		\[
		\prod_{j=1}^m
		\int_{\T^n}\|F_j^*(\omega)\|_{X_j}^{p_j}\,dm_n(\omega).
	\]
\end{proposition}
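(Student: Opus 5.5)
We argue by induction on the dimension $n$, working first with dilations so that every function is continuous up to the boundary, and passing to the limit at the end. Put $u_j=\|F_j\|_{X_j}^{p_j}$. By Lemma~\ref{lem:banach-logsubharmonic}, applied in each variable separately with the others frozen, $u_j$ is logarithmically subharmonic in each coordinate. Fix $0<\rho<1$ and replace $F_j$ by $F_{j,\rho}(z)=F_j(\rho z)$. These are holomorphic in a neighbourhood of $\overline{\U^n}$, so the functions $u_{j,\rho}$ are continuous there. It suffices to prove
\[
\int_{\U^n}\prod_{j=1}^m u_{j,\rho}\,d\mu_{\mathbf{m-2}}\leq\prod_{j=1}^m\int_{\T^n}u_{j,\rho}\,dm_n .
\]
The right-hand side is at most $\prod_j\sup_{r}\int_{\T^n}\|F_j(r\omega)\|^{p_j}\,dm_n$. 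The left-hand side increases to the full integral as $\rho\to1^-$ by Fatou's lemma, or by monotone convergence once one notes that $u_j$ is plurisubharmonic-like along radii. Fatou is enough for the inequality.

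The case $n=1$ is exactly Lemma~\ref{lem:one-variable-log}. For the inductive step, write $z=(z',z_n)$. Since $d\mu_{\mathbf{m-2}}$ is a product measure, Fubini gives
\[
\int_{\U^n}\prod_j u_{j,\rho}\,d\mu_{\mathbf{m-2}}=\int_{\U^{n-1}}\left(\int_{\U}\prod_j u_{j,\rho}(z',z_n)\,d\mu_{m-2}(z_n)\right)d\mu_{\mathbf{m-2}}(z').
\]
For fixed $z'$, each $z_n\mapsto u_{j,\rho}(z',z_n)$ is continuous and logarithmically subharmonic near $\overline\U$. Lemma~\ref{lem:one-variable-log} therefore bounds the inner integral by $\prod_j v_j(z')$, where
\[
v_j(z')=\int_{\T}\|F_{j,\rho}(z',\zeta)\|^{p_j}\,dm_1(\zeta).
\]

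The key step, and the main obstacle, is to recognise $v_j$ as $\|G_j(z')\|^{p_j}$ for a holomorphic Banach-valued map, so that the induction hypothesis in $n-1$ variables applies. Take $Y_j=L^{p_j}(\T,dm_1;X_j)$, a complex Banach space because $p_j\geq1$; this is where the hypothesis $p_j\geq1$ is used. Define $G_j(z')=F_{j,\rho}(z',\cdot)|_{\T}\in Y_j$. Then $\|G_j(z')\|_{Y_j}^{p_j}=v_j(z')$.

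Holomorphy of $G_j$ follows from uniform convergence of the power series of $F_{j,\rho}$ in $z'$ on a neighbourhood of $\overline{\U^{n-1}}\times\T$. Equivalently, one can check that difference quotients converge uniformly in $\zeta$, using continuity of the derivatives of $F_{j,\rho}$ on compacts. In addition, $G_j$ is continuous on a neighbourhood of $\overline{\U^{n-1}}$, so its torus means are finite and attained on $\T^{n-1}$.

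Because $G_j$ extends holomorphically past the closed polydisc, the induction hypothesis can be used in its strong form, via the same dilation-and-limit argument with $G_j(r\cdot)\to G_j$ uniformly. Fubini then gives
\[
\int_{\U^{n-1}}\prod_j\|G_j\|^{p_j}\,d\mu_{\mathbf{m-2}}\leq\prod_j\int_{\T^{n-1}}\int_{\T}\|F_{j,\rho}(\omega',\zeta)\|^{p_j}\,dm_1\,dm_{n-1}=\prod_j\int_{\T^n}u_{j,\rho}\,dm_n .
\]
This closes the induction.

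For the final statement, $L^{p_j}$ convergence $F_j(r\cdot)\to F_j^*$ gives convergence of the norms, so $\int_{\T^n}\|F_j(r\omega)\|^{p_j}\,dm_n\to\int_{\T^n}\|F_j^*\|^{p_j}\,dm_n$. These means are nondecreasing in $r$: $\|F_j(r\cdot)\|^{p_j}$ is subharmonic in each variable, so successive one-variable submean inequalities apply. Hence the supremum equals the limit, which is the stated product of boundary integrals.
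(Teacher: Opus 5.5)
Your proposal is correct and follows essentially the paper's proof. Both use induction on $n$ via Lemma~\ref{lem:one-variable-log} in the last variable, the Banach-valued map $G_j$ into $Y_j=L^{p_j}(\T;X_j)$, dilation plus Fatou's lemma, and monotonicity of the torus means for the final assertion. The differences are cosmetic: you dilate before inducting while the paper does so afterwards, and you helpfully make explicit that $p_j\geq1$ is exactly what makes $Y_j$ a Banach space.
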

\begin{proof}
	We first prove the assertion under the additional assumption that each \(F_j\) is holomorphic in a neighbourhood of the closed polydisc \(\overline{\U^n}\).  We argue by induction on \(n\).

	For \(n=1\), Lemma \ref{lem:banach-logsubharmonic} shows that
	\[
	u_j(z)=\|F_j(z)\|_{X_j}^{p_j}
	\]
	is logarithmically subharmonic in a neighbourhood of \(\overline\U\). Thus, Lemma \ref{lem:one-variable-log} gives
	\[
	\int_{\U}\prod_{j=1}^m\|F_j(z)\|_{X_j}^{p_j}\,d\mu_{m-2}(z)
	\leq
	\prod_{j=1}^m
	\int_{\T}\|F_j(\zeta)\|_{X_j}^{p_j}\,dm_1(\zeta).
	\]
	This proves the case \(n=1\).

	Assume now that the result has been proved in \(n-1\) variables for Banach space-valued holomorphic functions.  Write \(z=(z',z_n)\), where \(z'\in\U^{n-1}\).  For fixed \(z'\), apply the one-variable case to the functions
	\[
	z_n\mapsto F_j(z',z_n).
	\]
	This gives
	\begin{align*}
		\int_{\U}\prod_{j=1}^m\|F_j(z',z_n)\|_{X_j}^{p_j}\,d\mu_{m-2}(z_n)
		\leq
		\prod_{j=1}^m
		\int_{\T}\|F_j(z',\eta)\|_{X_j}^{p_j}\,dm_1(\eta).
	\end{align*}
	For each \(j\), define the Bochner space
	\[
	Y_j=L^{p_j}(\T;X_j)
	\]
	and define a function \(G_j:\U^{n-1}\to Y_j\) by
	\[
	G_j(z')(\eta)=F_j(z',\eta),
	\qquad \eta\in\T.
	\]
		Because \(F_j\) is holomorphic in a neighbourhood of \(\overline{\U^n}\), the difference quotients in each variable of \(z'\) converge uniformly in \(\eta\in\T\). Hence they converge in \(L^{p_j}(\T;X_j)\). The Banach-valued Cauchy formula then shows that $G_j$ is jointly holomorphic in a neighbourhood of $\overline{\U^{n-1}}$. Moreover,
	\[
	\|G_j(z')\|_{Y_j}^{p_j}
	=
	\int_{\T}\|F_j(z',\eta)\|_{X_j}^{p_j}\,dm_1(\eta).
	\]
	Integrating the preceding one-variable estimate over \(z'\in\U^{n-1}\) and applying the induction hypothesis to the Banach-valued maps \(G_j\), we obtain
	\begin{align*}
		\int_{\U^n}\prod_{j=1}^m\|F_j(z)\|_{X_j}^{p_j}\,d\mu_{\mathbf{m-2}}(z)
		&\leq
		\int_{\U^{n-1}}\prod_{j=1}^m\|G_j(z')\|_{Y_j}^{p_j}\,d\mu_{\mathbf{m-2}}(z') \\
		&\leq
		\prod_{j=1}^m
		\int_{\T^{n-1}}\|G_j(\omega')\|_{Y_j}^{p_j}\,dm_{n-1}(\omega') \\
		&=
		\prod_{j=1}^m
		\int_{\T^{n-1}}\int_{\T}\|F_j(\omega',\eta)\|_{X_j}^{p_j}\,dm_1(\eta)\,dm_{n-1}(\omega') \\
		&=
		\prod_{j=1}^m
		\int_{\T^n}\|F_j(\omega)\|_{X_j}^{p_j}\,dm_n(\omega).
	\end{align*}
	This proves the case of the closed polydisc.

	For the general case, fix \(0<\rho<1\) and apply the proved estimate to the dilated maps
	\[
	F_{j,\rho}(z)=F_j(\rho z).
	\]
	They are holomorphic in a neighbourhood of \(\overline{\U^n}\).  Thus
	\[
	\int_{\U^n}\prod_{j=1}^m\|F_j(\rho z)\|_{X_j}^{p_j}\,d\mu_{\mathbf{m-2}}(z)
	\leq
	\prod_{j=1}^m
	\int_{\T^n}\|F_j(\rho\omega)\|_{X_j}^{p_j}\,dm_n(\omega).
	\]
		Let \(\rho\to1^-\). The integrand on the left converges pointwise to \(\prod_j\|F_j(z)\|^{p_j}\), and Fatou's lemma gives the left-hand side of \eqref{eq:banach-polydisc}. The right-hand side is bounded by the product of the radial Hardy quantities appearing there. This proves the proposition. By Lemma \ref{lem:banach-logsubharmonic}, each function $\|F_j\|_{X_j}^{p_j}$ is separately subharmonic. Successive applications of the one-variable submean inequality therefore show that the corresponding radial integrals are nondecreasing. If the stated $L^{p_j}$ boundary convergence holds, convergence of the $L^{p_j}$ norms identifies their limits with the boundary integrals, giving the final assertion.
\end{proof}

Applying the preceding proposition to repeated copies of a holomorphic pair gives the square-function estimate used below. We record this diagonal consequence separately.

\begin{corollary}\label{cor:square-Markovic}
	Let \(m\in\N\), \(m\geq2\), \(1\leq p<\infty\), and let \(h,g\) be holomorphic in \(\U^n\).  Define
	\[
	H=(h,g),
	\qquad
	Q(z)=\|H(z)\|=\left(|h(z)|^2+|g(z)|^2\right)^{1/2}.
	\]
	Assume that
	\[
	\|H\|_{H^p(\U^n;\C^2)}^p
	:=
	\sup_{0<r<1}\int_{\T^n}Q(r\omega)^p\,dm_n(\omega)<\infty.
	\]
	Then
	\begin{equation}\label{eq:square-cor}
		\int_{\U^n}Q(z)^{mp}\,d\mu_{\mathbf{m-2}}(z)
		\leq
		\|H\|_{H^p(\U^n;\C^2)}^{mp}.
	\end{equation}
		The radial boundary function $H^*\in L^p(\T^n;\C^2)$ exists. Writing $Q^*(\omega)=\|H^*(\omega)\|$, inequality \eqref{eq:square-cor} may equivalently be written as
		\[
		\int_{\U^n}Q(z)^{mp}\,d\mu_{\mathbf{m-2}}(z)
		\leq
		\left(\int_{\T^n}Q^*(\omega)^p\,dm_n(\omega)\right)^m.
	\]
\end{corollary}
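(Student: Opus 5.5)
The inequality itself should follow directly from Proposition~\ref{prop:vector-polydisc}. I will take $X_j=\C^2$ with the Euclidean norm and $F_j=H=(h,g)$ for every $j=1,\ldots,m$, all with the same exponent $p_j=p\geq1$. The hypothesis of the proposition is exactly the assumption $\|H\|_{H^p(\U^n;\C^2)}<\infty$, and $\|F_j(z)\|_{X_j}=Q(z)$. The left-hand side of \eqref{eq:banach-polydisc} then becomes $\int_{\U^n}Q^{mp}\,d\mu_{\mathbf{m-2}}$, and the right-hand side becomes the $m$-th power of $\sup_r\int_{\T^n}Q(r\omega)^p\,dm_n(\omega)$, which is $\|H\|_{H^p}^{mp}$. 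This gives \eqref{eq:square-cor}.

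For the second assertion I will produce the radial boundary function $H^*$ and check the $L^p$ convergence required in the last clause of Proposition~\ref{prop:vector-polydisc}. Since $|h|\leq Q$ and $|g|\leq Q$, both $h$ and $g$ lie in $H^p(\U^n)$ with $p\geq1$. Classical polydisc Hardy space theory (Rudin's monograph) then says that for $p\geq1$ the radial limits $h^*,g^*$ exist $m_n$-a.e. and that $h(r\cdot)\to h^*$ and $g(r\cdot)\to g^*$ in $L^p(\T^n)$. Only $p=1$ needs care. For $p>1$ these facts come from the Poisson representation with an $L^p$ boundary function. For $p=1$, $h$ is the Poisson integral of a measure, and the F.~and M.~Riesz theorem on $\T^n$ does not hold in full generality. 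I will avoid it by factoring $|h|\leq Q$ with $Q^{1/2}$, or simpler, by using that $|h|^{1/2}$ is $n$-subharmonic with an $L^2$ least $n$-harmonic majorant. Rudin shows (Theorem~3.4.3 in \emph{Function Theory in Polydiscs}) that $H^p(\U^n)$ functions have radial limits and that $h(r\cdot)\to h^*$ in $L^p$ for all $0<p<\infty$. I will simply cite that.

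Setting $H^*=(h^*,g^*)$, I get $\|H(r\cdot)-H^*\|\leq|h(r\cdot)-h^*|+|g(r\cdot)-g^*|$. Hence $H(r\cdot)\to H^*$ in $L^p(\T^n;\C^2)$, which gives $\int Q(r\omega)^p\,dm_n\to\int (Q^*)^p\,dm_n$. By the monotonicity noted in the proof of the proposition, this limit equals the supremum. Substituting into \eqref{eq:square-cor} gives the equivalent form. The only real obstacle is justifying $L^p$ radial convergence at the endpoint $p=1$ on the polydisc, and I handle it by quoting Rudin's theorem for $H^p(\U^n)$, $0<p<\infty$, applied to each component separately.
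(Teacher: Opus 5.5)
Your proposal is correct and follows the paper's proof. You apply Proposition~\ref{prop:vector-polydisc} with $X_j=\C^2$, $F_j=H$ and $p_j=p$, then obtain $H^*$ with $H(r\,\cdot)\to H^*$ in $L^p(\T^n;\C^2)$ by applying Rudin's scalar boundary theorem for $H^p(\U^n)$ to $h$ and $g$ separately, which activates the final clause of the proposition. The aside about the F.~and M.~Riesz theorem is unnecessary: the maximal-function mechanism you mention (an $L^2$ $n$-harmonic majorant for $|h|^{1/2}$) is exactly what gives $L^1$ radial convergence at $p=1$, and Rudin's theorem already covers that case.
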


\begin{proof}
	Since $H\in H^p(\U^n;\C^2)$, its two components belong to the scalar Hardy space $H^p(\U^n)$. The scalar Hardy boundary theorem on the polydisc \cite{RudinPolydisc}, applied componentwise, gives $H^*\in L^p(\T^n;\C^2)$ and
	\[
	 H(r\,\cdot)\longrightarrow H^*
	 \quad\text{in }L^p(\T^n;\C^2)
	 \quad\text{as }r\to1^-.
	\]
	Apply Proposition \ref{prop:vector-polydisc} with \(X_1=\cdots=X_m=\C^2\),
	\[
	F_1=\cdots=F_m=H,
	\qquad
	p_1=\cdots=p_m=p.
	\]
	Since \(\|H(z)\|=Q(z)\), the two forms of that proposition give the asserted inequalities.
\end{proof}

\subsection{The Square-Function Transfer}

The preceding corollary supplies the holomorphic part of the argument. Its importance for pluriharmonic functions is that it applies to the square function associated with the holomorphic decomposition. If
\[
 f=h+\overline g,
\]
then Markovi\'c's holomorphic inequality cannot be applied directly to $|f|$. However, if we introduce the vector-valued holomorphic function
\[
 H_f=(h,g),
\]
then
\[
 Q_f=\|H_f\|=(|h|^2+|g|^2)^{1/2}.
\]
Hence the preceding vector-valued holomorphic inequality applies to $H_f$, and then Lemma \ref{lem:weighted-B} gives a quantitative way to pass from $f$ to $Q_f$ and back.

\section{Weighted Inequalities on the Polydisc}

We first establish the dimension-free Riesz-transfer part of the main polydisc theorem. This is the estimate obtained by passing through the holomorphic square functions.

\begin{proposition}\label{prop:Riesz-transfer}
Let $m\in\N$, $m\geq2$, $1<p_1,\ldots,p_m<\infty$, and let $f_j\in h^{p_j}(\U^n)$, $j=1,\ldots,m$. Then
\[
 \int_{\U^n}\prod_{j=1}^m|f_j(z)|^{p_j}\,d\mu_{\mathbf{m-2}}(z)
 \leq
 \prod_{j=1}^mR_{p_j,m}^{p_j}
 \prod_{j=1}^m\|f_j\|_{h^{p_j}(\U^n)}^{p_j}.
\]
\end{proposition}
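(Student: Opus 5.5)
The plan is to reduce the multilinear estimate to the diagonal one by H\"older's inequality, and then to run each factor through the chain
\[
 |f|\;\longrightarrow\;Q_f\;\longrightarrow\;\text{holomorphic Hardy norm of }H_f\;\longrightarrow\;\|f\|_{h^p}.
\]
The factorization $R_{p,m}=A_pB_{mp}$ recorded at the beginning of Section~2 shows that this chain should produce exactly the stated constant.

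\textbf{Step 1 (H\"older).} Since $d\mu_{\mathbf{m-2}}$ is a probability measure and there are $m$ factors, H\"older's inequality with all exponents equal to $m$ gives
\[
 \int_{\U^n}\prod_{j=1}^m|f_j|^{p_j}\,d\mu_{\mathbf{m-2}}
 \leq
 \prod_{j=1}^m\left(\int_{\U^n}|f_j|^{mp_j}\,d\mu_{\mathbf{m-2}}\right)^{1/m}.
\]
It therefore suffices to prove, for a single $f=h+\overline g\in h^p(\U^n)$ with $1<p<\infty$ and the normalization $g(0)=0$, the diagonal estimate
\[
 \int_{\U^n}|f|^{mp}\,d\mu_{\mathbf{m-2}}\leq (A_pB_{mp})^{mp}\|f\|_{h^p}^{mp}.
\]

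\textbf{Step 2 (finiteness of $\int Q_f^{mp}$).} Put $H_f=(h,g)$ and $Q_f=\|H_f\|$. The first step is to show that $H_f$ lies in $H^p(\U^n;\C^2)$ with a quantitative bound.
\begin{itemize}
\item Apply \eqref{eq:CH-A} of Theorem~\ref{thm:CH}; its sign condition holds because $g(0)=0$. This bounds $\lim_{r\to1^-}\bigl(\int_{\T^n}Q_f(r\omega)^p\,dm_n\bigr)^{1/p}$ by $A_p\|f\|_{h^p}$.
\item By Lemma~\ref{lem:banach-logsubharmonic}, $Q_f^p$ is separately subharmonic. The torus means $\int_{\T^n}Q_f(r\omega)^p\,dm_n(\omega)$ are therefore nondecreasing in $r$, so the supremum over $r$ equals this limit. (The same argument appears in the proofs of Lemma~\ref{lem:weighted-B} and Proposition~\ref{prop:vector-polydisc}.)
\item Hence $\|H_f\|_{H^p(\U^n;\C^2)}\leq A_p\|f\|_{h^p}<\infty$.
\end{itemize}
Corollary~\ref{cor:square-Markovic} then gives
\[
 \int_{\U^n}Q_f^{mp}\,d\mu_{\mathbf{m-2}}\leq \|H_f\|_{H^p}^{mp}\leq A_p^{mp}\|f\|_{h^p}^{mp},
\]
and in particular this integral is finite.

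\textbf{Step 3 (back to $|f|$).} Because the right-hand side just obtained is finite, \eqref{eq:weighted-B} of Lemma~\ref{lem:weighted-B} applies with $q=mp>1$ and $\boldsymbol\alpha=\mathbf{m-2}$. It yields
\[
 \int_{\U^n}|f|^{mp}\,d\mu_{\mathbf{m-2}}\leq B_{mp}^{mp}\int_{\U^n}Q_f^{mp}\,d\mu_{\mathbf{m-2}}\leq (A_pB_{mp})^{mp}\|f\|_{h^p}^{mp}.
\]
Taking the $1/m$-th power gives the factor $R_{p_j,m}^{p_j}\|f_j\|_{h^{p_j}}^{p_j}$ for each $j$, and multiplying over $j$ completes the proof.

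\textbf{Main obstacle.} The delicate part is Step~2, namely justifying the hypotheses of the quoted results rather than the inequalities themselves.
\begin{itemize}
\item Theorem~\ref{thm:CH} is stated with a radial limit; we must identify it with the $\sup_r$ quantity required by Corollary~\ref{cor:square-Markovic}. Monotonicity of the torus means handles this. If one prefers not to rely on the limit formulation, one can instead apply Theorem~\ref{thm:CH} to the dilates $f_r$ and then let $r\to1^-$.
\item We must ensure that $\int Q_f^{mp}\,d\mu_{\mathbf{m-2}}<\infty$ before invoking \eqref{eq:weighted-B}. This is supplied directly by the Corollary.
\end{itemize}
The remaining steps are formal.
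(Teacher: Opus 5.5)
Your proposal is correct and follows essentially the same route as the paper. It uses H\"older's inequality with all exponents equal to $m$, then the forward estimate \eqref{eq:CH-A} together with monotonicity of the torus means to get $\|H_f\|_{H^p(\U^n;\C^2)}\leq A_p\|f\|_{h^p}$, then Corollary~\ref{cor:square-Markovic} for the square function (which also supplies the finiteness hypothesis), and finally \eqref{eq:weighted-B} with $q=mp$, producing the constant $A_pB_{mp}=R_{p,m}$.
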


\begin{proof}
For each $j$, write
\[
 f_j=h_j+\overline{g_j},\qquad g_j(0)=0,
\]
where $h_j,g_j$ are holomorphic in $\U^n$. Put
\[
 H_j=(h_j,g_j),\qquad
 Q_j(z)=\|H_j(z)\|=\bigl(|h_j(z)|^2+|g_j(z)|^2\bigr)^{1/2}.
\]

By Lemma \ref{lem:banach-logsubharmonic}, $Q_j^{p_j}=\|H_j\|^{p_j}$ is separately subharmonic. Its product-torus radial means are therefore nondecreasing, and the forward Chen--Hamada estimate \eqref{eq:CH-A} gives
\[
 \sup_{0<r<1}\int_{\T^n}Q_j(r\omega)^{p_j}\,dm_n(\omega)
 \leq
 A_{p_j}^{p_j}\|f_j\|_{h^{p_j}(\U^n)}^{p_j}.
\]
Thus \(H_j\in H^{p_j}(\U^n;\C^2)\). Corollary \ref{cor:square-Markovic} then gives
\[
 \int_{\U^n}Q_j^{mp_j}\,d\mu_{\mathbf{m-2}}<\infty.
\]
Consequently, the weighted Riesz estimate in Lemma \ref{lem:weighted-B} can be applied with $q=mp_j$ and $\boldsymbol\alpha=\mathbf{m-2}$.

H\"older's inequality gives
\begin{equation}\label{eq:Holder-step}
 \int_{\U^n}\prod_{j=1}^m |f_j|^{p_j}\,d\mu_{\mathbf{m-2}}
 \leq
 \prod_{j=1}^m
 \left(
 \int_{\U^n}|f_j|^{mp_j}\,d\mu_{\mathbf{m-2}}
 \right)^{1/m}.
\end{equation}
Moreover, for each $j$, Lemma \ref{lem:weighted-B}, Corollary \ref{cor:square-Markovic}, and the preceding supremum estimate give
\[
\begin{aligned}
 \left(\int_{\U^n}|f_j|^{mp_j}\,d\mu_{\mathbf{m-2}}\right)^{1/m}
 &\leq
 B_{mp_j}^{p_j}
 \left(\int_{\U^n}Q_j^{mp_j}\,d\mu_{\mathbf{m-2}}\right)^{1/m} \\
 &\leq
 B_{mp_j}^{p_j}
 \sup_{0<r<1}\int_{\T^n}Q_j(r\omega)^{p_j}\,dm_n(\omega) \\
 &\leq
 R_{p_j,m}^{p_j}\|f_j\|_{h^{p_j}(\U^n)}^{p_j}.
\end{aligned}
\]
Combining this estimate with \eqref{eq:Holder-step} and using $R_{p_j,m}=A_{p_j}B_{mp_j}$, we get the desired conclusion.
\end{proof}

Taking equal functions and equal exponents in the preceding proposition gives the corresponding Hardy-to-weighted-Bergman inclusion. We record both its integral and norm forms.

\begin{corollary}\label{cor:diagonal-Markovic}
Let $m\in\N$, $m\geq2$, $1<p<\infty$, and let $f\in h^p(\U^n)$. Then
\begin{equation}\label{eq:diag-int}
 \int_{\U^n}|f(z)|^{mp}\,d\mu_{\mathbf{m-2}}(z)
 \leq
 R_{p,m}^{mp}
 \|f\|_{h^p(\U^n)}^{mp}.
\end{equation}
Equivalently,
\begin{equation}\label{eq:diag-norm}
 \|f\|_{b^{mp}_{\mathbf{m-2}}(\U^n)}
 \leq
 R_{p,m}\,
 \|f\|_{h^p(\U^n)}.
\end{equation}
\end{corollary}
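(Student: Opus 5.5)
This follows directly from Proposition \ref{prop:Riesz-transfer} in the diagonal case. We take $f_1=\cdots=f_m=f$ and $p_1=\cdots=p_m=p$. Since $f\in h^p(\U^n)$ with $1<p<\infty$, all hypotheses of the proposition hold. The left-hand side becomes $\int_{\U^n}|f|^{mp}\,d\mu_{\mathbf{m-2}}$. The right-hand side becomes $\prod_{j=1}^m R_{p,m}^{p}\|f\|_{h^p}^{p}=R_{p,m}^{mp}\|f\|_{h^p(\U^n)}^{mp}$, which is \eqref{eq:diag-int}.

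For the norm form \eqref{eq:diag-norm}, we take $mp$-th roots of both sides of \eqref{eq:diag-int}. Both sides are nonnegative and $t\mapsto t^{1/(mp)}$ is increasing on $[0,\infty)$, so the inequality is preserved. By definition, $\|f\|_{b^{mp}_{\mathbf{m-2}}(\U^n)}=\bigl(\int_{\U^n}|f|^{mp}\,d\mu_{\mathbf{m-2}}\bigr)^{1/(mp)}$, which gives \eqref{eq:diag-norm}. The two forms are therefore equivalent.

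There is no real obstacle. The only point to check is that the proof of Proposition \ref{prop:Riesz-transfer} needs nothing beyond $f_j\in h^{p_j}$. It does not: finiteness of $\int Q^{mp}\,d\mu_{\mathbf{m-2}}$, which is required to invoke Lemma \ref{lem:weighted-B}, was derived inside that proof from Corollary \ref{cor:square-Markovic}. In the diagonal case the H\"older step \eqref{eq:Holder-step} is an equality, so nothing is lost there either.
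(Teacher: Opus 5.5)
Your proposal is correct and is exactly the paper's argument: specialize Proposition \ref{prop:Riesz-transfer} to $f_1=\cdots=f_m=f$, $p_1=\cdots=p_m=p$ to get \eqref{eq:diag-int}, then take $mp$-th roots to get \eqref{eq:diag-norm}. Your side remarks are accurate. The finiteness needed for Lemma \ref{lem:weighted-B} is established inside that proof, and the H\"older step is an equality in the diagonal case.
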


\begin{proof}
Take $f_1=\cdots=f_m=f$ and $p_1=\cdots=p_m=p$ in Proposition \ref{prop:Riesz-transfer}. This gives \eqref{eq:diag-int}; taking $mp$-th roots gives \eqref{eq:diag-norm}.
\end{proof}

\begin{remark}
When $g\equiv0$, the sharp holomorphic inclusion is stronger and has norm one by Markovi\'c's theorem. The constants in Corollary \ref{cor:diagonal-Markovic} are intended for the full pluriharmonic class and arise from the Riesz comparison between $f$ and its square function.
\end{remark}

\begin{remark}
The elementary estimate
\[
 |h+\overline g|\leq \sqrt2(|h|^2+|g|^2)^{1/2}
\]
would replace $B_{mp}$ by $\sqrt2$ and give the constant $\sqrt2 A_p$. Since $B_{mp}<\sqrt2$, the Riesz-transfer constant $R_{p,m}=A_pB_{mp}$ is strictly smaller.
\end{remark}

\begin{remark}
If $m=2$ and $p=N\geq2$ is an integer, then $R_{N,2}$ agrees with the upper-bound constant obtained by Kalaj \cite[Theorem~2.11]{Kalaj2019}. In particular, $R_{2,2}=D_2\approx1.306563$, which is also the upper bound obtained by Kalaj and Me\v{s}trovi\'c \cite[Theorem~1.2 and Remark~1.3]{KalajMestrovic}. When $n=1$ and $N>2$, Theorem \ref{thm:intro-high-p} gives the smaller high-$p$ bound.
\end{remark}

\subsection{A High-\texorpdfstring{$p$}{p} Poisson Estimate}

For $z=(z_1,\ldots,z_n)\in\U^n$, let $P_n$ denote the product Poisson operator
\[
 P_n\psi(z)=
 \int_{\T^n}\prod_{k=1}^nP_{z_k}(\omega_k)\psi(\omega)\,dm_n(\omega),
\]
where
\[
 P_z(\omega)=\frac{1-|z|^2}{|\omega-z|^2},
 \qquad z\in\U,\quad \omega\in\T.
\]
The next lemma iterates the explicit one-variable $L^2\to L^{2m}$ estimate over the coordinates. It supplies the $L^2$ input for the high-$p$ theorem.

\begin{lemma}\label{lem:product-Poisson}
Let $m\in\N$, $m\geq2$, and let $\psi\in L^2(\T^n)$. Then
\begin{equation}\label{eq:product-Poisson}
 \int_{\U^n}|P_n\psi(z)|^{2m}\,d\mu_{\mathbf{m-2}}(z)
 \leq
 D_m^{2mn}
 \left(\int_{\T^n}|\psi(\omega)|^2\,dm_n(\omega)\right)^m.
\end{equation}
\end{lemma}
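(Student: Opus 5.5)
We prove the one-variable estimate first and then iterate it over the coordinates, in the same way as the induction in Proposition \ref{prop:vector-polydisc}.

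\textbf{Step 1: the case $n=1$.} For $\psi\in L^2(\T)$ we need
\[
\int_{\U}|P_1\psi|^{2m}\,d\mu_{m-2}\leq D_m^{2m}\|\psi\|_{L^2(\T)}^{2m}.
\]
Write $P_1\psi=h+\overline g$ with $h,g$ holomorphic and $g(0)=0$. By orthogonality of Fourier coefficients, the square function $Q=(|h|^2+|g|^2)^{1/2}$ satisfies $\|H\|_{H^2(\U;\C^2)}^2=\|\psi\|_{L^2}^2$. This is exact, so no factor $A_2$ appears. Lemma \ref{lem:weighted-B} with $q=2m$ and $\alpha=m-2$ gives
\[
\int|P_1\psi|^{2m}\,d\mu_{m-2}\leq B_{2m}^{2m}\int Q^{2m}\,d\mu_{m-2}.
\]
Corollary \ref{cor:square-Markovic} with $p=2$ bounds the last integral by $\|\psi\|_2^{2m}$. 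Since $B_{2m}=D_m$, this proves the case $n=1$. The right-hand side of Lemma \ref{lem:weighted-B} is finite by the same corollary.

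\textbf{Step 2: vector-valued form.} For the induction we need the one-variable estimate for $L^2$-valued data. We will apply Step 1 in a Hilbert-space form. Take $\psi:\T\to\mathcal H$ with $\mathcal H=L^2(\T^{n-1})$. Inside the induction we do not want $\|P_1\psi(z)\|_{\mathcal H}$ itself. We want the quantity
\[
\int_{\U^{n-1}}|P_{n-1}[\psi(\cdot,\eta)\text{-extension}]|^{2m}\,d\mu,
\]
which is an $L^{2m}$-type norm. This suggests that we iterate by Minkowski's integral inequality rather than by Hilbert-valued functions.

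\textbf{Step 3: the iteration.} Write $z=(z',z_n)$. Then $P_n\psi(z',z_n)=P_1^{(z_n)}\bigl[\eta\mapsto\Phi(z',\eta)\bigr]$, where $\Phi(z',\eta)=P_{n-1}\psi(\cdot,\eta)(z')$. For fixed $z'$, Step 1 in the last variable gives
\[
\int_{\U}|P_n\psi(z',z_n)|^{2m}\,d\mu_{m-2}(z_n)\leq D_m^{2m}\Bigl(\int_\T|\Phi(z',\eta)|^2\,dm_1(\eta)\Bigr)^m.
\]
We then integrate in $z'$ and take $2m$-th roots. The right side becomes $D_m\,\bigl\|\,\|\Phi(z',\cdot)\|_{L^2(\T)}\bigr\|_{L^{2m}(\mu(z'))}$. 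Since $2m\geq2$, Minkowski's integral inequality lets us swap the two norms:
\[
\le D_m\Bigl\|\,\|\Phi(\cdot,\eta)\|_{L^{2m}(\mu_{\mathbf{m-2}}(\U^{n-1}))}\Bigr\|_{L^2(d\eta)}.
\]
The induction hypothesis applied to $\psi(\cdot,\eta)$ bounds the inner norm by $D_m^{n-1}\|\psi(\cdot,\eta)\|_{L^2(\T^{n-1})}$. Fubini then gives $D_m^n\|\psi\|_{L^2(\T^n)}$. Raising to the power $2m$ yields \eqref{eq:product-Poisson}.

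\textbf{Main obstacle.} The delicate points are measurability of $\Phi$ and the exact $H^2$ identity in Step 1, with $Q$ computed from the normalized decomposition. Both are routine for $L^2$ data; one can first argue for trigonometric polynomials $\psi$ and then pass to a density limit using Fatou's lemma. The key structural observation is the norm swap $L^{2m}_{z'}L^2_\eta\leq L^2_\eta L^{2m}_{z'}$. This is what allows the one-variable constant to multiply across the coordinates, producing $D_m^{n}$.
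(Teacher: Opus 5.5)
Your proof is correct and follows the paper's argument. Both start from the one-variable bound with constant $D_m=B_{2m}$: the paper quotes the case $p=2$ of Corollary~\ref{cor:diagonal-Markovic}, where $A_2=1$, which amounts to your orthogonality identity $\|H\|_{H^2(\U;\C^2)}=\|\psi\|_{L^2(\T)}$. Both then induct on $n$, using Minkowski's integral inequality to pass from $L^{2m}_{z'}L^2_\eta$ to $L^2_\eta L^{2m}_{z'}$.

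The remaining differences are cosmetic. The paper proves the estimate for continuous $\psi$ and extends it by an $L^{2m}$-Cauchy argument. Your reduction to trigonometric polynomials, followed by pointwise convergence of $P_n\psi_k$ and Fatou's lemma, works equally well.
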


\begin{proof}
We first consider one variable. The case $p=2$ of Corollary \ref{cor:diagonal-Markovic} gives, for every $\varphi\in L^2(\T)$,
\begin{equation}\label{eq:one-variable-Poisson}
 \int_{\U}|P_1\varphi(z)|^{2m}\,d\mu_{m-2}(z)
 \leq
 D_m^{2m}
 \left(\int_{\T}|\varphi(\zeta)|^2\,dm_1(\zeta)\right)^m.
\end{equation}
Indeed, $P_1\varphi$ is harmonic in $\U$, and
\[
 \|P_1\varphi\|_{b^{2m}_{m-2}(\U)}
 \leq D_m\|P_1\varphi\|_{h^2(\U)}
 \leq D_m\|\varphi\|_{L^2(\T)}.
\]

We prove \eqref{eq:product-Poisson} first for continuous $\psi$ by induction on $n$. The case $n=1$ is \eqref{eq:one-variable-Poisson}. Suppose that the result has been proved in dimension $n-1$. Write
\[
 z=(z',z_n)\in\U^{n-1}\times\U,
 \qquad
 \omega=(\omega',\eta)\in\T^{n-1}\times\T.
\]
For each $\eta\in\T$, set
\[
 \psi_\eta(\omega')=\psi(\omega',\eta),
 \qquad
 V_\eta(z')=P_{n-1}\psi_\eta(z').
\]
For fixed $z'$, the function $z_n\mapsto P_n\psi(z',z_n)$ is the one-variable Poisson extension of the boundary function $\eta\mapsto V_\eta(z')$. Applying \eqref{eq:one-variable-Poisson} in the variable $z_n$ and then integrating over $z'\in\U^{n-1}$ gives
\begin{align*}
 \int_{\U^n}|P_n\psi(z)|^{2m}\,d\mu_{\mathbf{m-2}}(z)
 &\leq D_m^{2m}
 \int_{\U^{n-1}}
 \left(\int_{\T}|V_\eta(z')|^2\,dm_1(\eta)\right)^m
 d\mu_{\mathbf{m-2}}(z').
\end{align*}
By Minkowski's integral inequality,
\begin{align*}
 &\left[
 \int_{\U^{n-1}}
 \left(\int_{\T}|V_\eta(z')|^2\,dm_1(\eta)\right)^m
 d\mu_{\mathbf{m-2}}(z')
 \right]^{1/m} \\
 &\qquad\leq
 \int_{\T}
 \left(
 \int_{\U^{n-1}}|V_\eta(z')|^{2m}
 d\mu_{\mathbf{m-2}}(z')
 \right)^{1/m}dm_1(\eta).
\end{align*}
The induction hypothesis gives
\[
 \left(
 \int_{\U^{n-1}}|V_\eta(z')|^{2m}
 d\mu_{\mathbf{m-2}}(z')
 \right)^{1/m}
 \leq
 D_m^{2(n-1)}
 \int_{\T^{n-1}}|\psi(\omega',\eta)|^2\,dm_{n-1}(\omega').
\]
Combining these inequalities proves \eqref{eq:product-Poisson} for continuous $\psi$.

For general $\psi\in L^2(\T^n)$, choose $\psi_k\in C(\T^n)$ such that $\psi_k\to\psi$ in $L^2(\T^n)$. Applying the estimate we have already proved, to $\psi_k-\psi_\ell$, shows that $P_n\psi_k$ converges in $L^{2m}(\U^n,d\mu_{\mathbf{m-2}})$. For each fixed $z\in\U^n$, the product Poisson kernel belongs to $L^2(\T^n)$, and hence $P_n\psi_k(z)\to P_n\psi(z)$. An almost-everywhere convergent subsequence of the $L^{2m}$-convergent sequence therefore identifies its limit with $P_n\psi$. Passing to the limit in the estimate for $\psi_k$ proves \eqref{eq:product-Poisson} for every $\psi\in L^2(\T^n)$.
\end{proof}

\begin{proof}[Proof of Theorem \ref{thm:intro-high-p}]
Fix $0<\rho<1$ and put
\[
 f_\rho(z)=f(\rho z),\qquad z\in\U^n.
\]
The function $f_\rho$ is pluriharmonic in a neighbourhood of the closed polydisc, and hence it is separately harmonic and continuous on $\overline{\U^n}$. The iterated Poisson formula gives
\[
 f_\rho(z)=P_n[f_\rho|_{\T^n}](z).
\]
Since $p>2$, the function $w\mapsto |w|^{p/2}$ is convex on $\C\simeq\mathbb R^2$. The product Poisson kernel is positive and has total mass one, so Jensen's inequality gives
\[
 |f_\rho(z)|^{p/2}
 \leq
 P_n[|f_\rho|^{p/2}](z).
\]
Define
\[
 \psi_\rho(\omega)=|f(\rho\omega)|^{p/2},
 \qquad \omega\in\T^n.
\]
Then $|f(\rho z)|^{mp}\leq|P_n\psi_\rho(z)|^{2m}$. Lemma \ref{lem:product-Poisson} yields
\begin{align*}
 \int_{\U^n}|f(\rho z)|^{mp}\,d\mu_{\mathbf{m-2}}(z)
 &\leq
 \int_{\U^n}|P_n\psi_\rho(z)|^{2m}\,d\mu_{\mathbf{m-2}}(z)\\
 &\leq
 D_m^{2mn}
 \left(\int_{\T^n}|f(\rho\omega)|^p\,dm_n(\omega)\right)^m\\
 &\leq
 D_m^{2mn}\|f\|_{h^p(\U^n)}^{mp}.
\end{align*}
Letting $\rho\to1^-$ and using Fatou's lemma, we obtain
\[
 \int_{\U^n}|f(z)|^{mp}\,d\mu_{\mathbf{m-2}}(z)
 \leq
 D_m^{2mn}\|f\|_{h^p(\U^n)}^{mp}.
\]
Taking $mp$-th roots proves the theorem.
\end{proof}

\begin{remark}
The proof also works for $p=2$, since $w\mapsto|w|$ is convex. It then gives the constant $D_m^n$, which is weaker than the dimension-free constant $D_m$ when $n>1$. This is why Theorem \ref{thm:intro-high-p} is stated for $p>2$.

The same high-$p$ estimate follows by interpolation. Lemma \ref{lem:product-Poisson} gives
\[
 \|P_n\|_{L^2(\T^n)\to L^{2m}(\U^n,d\mu_{\mathbf{m-2}})}
 \leq D_m^n,
\]
whereas positivity of the Poisson kernel gives
\[
 \|P_n\|_{L^\infty(\T^n)\to L^\infty(\U^n)}\leq1.
\]
With $\theta=1-2/p$, the Riesz--Thorin theorem yields
\[
 \|P_n\|_{L^p(\T^n)\to L^{mp}(\U^n,d\mu_{\mathbf{m-2}})}
 \leq D_m^{n(1-\theta)}=D_m^{2n/p}.
\]
Thus interpolation and the direct Jensen argument give the same constant. The argument also applies, for $p\geq2$, to separately harmonic functions whose dilates are continuous on the closed polydisc and possess the iterated product Poisson representation used above.
\end{remark}

The constant function, the diagonal Riesz-transfer estimate, Theorem \ref{thm:intro-high-p}, and the preceding $p=2$ observation give
\[
 1\leq C_{p,m,n}\leq\Gamma_{p,m,n},
 \qquad p\geq2.
\]

The preceding bound immediately determines the large-$p$ behavior of the optimal diagonal constant.

\begin{corollary}
For fixed $m,n\in\N$ with $m\geq2$,
\[
 \lim_{p\to\infty}C_{p,m,n}=1.
\]
\end{corollary}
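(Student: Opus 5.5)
The plan is a squeeze argument: bound $C_{p,m,n}$ from below by $1$ and from above by a quantity that tends to $1$ as $p\to\infty$.

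For the lower bound, take $f\equiv1$, which is pluriharmonic and lies in every $h^p(\U^n)$. Its $h^p$-norm is $1$. Since $\mu_{\mathbf{m-2}}$ is a probability measure on $\U^n$, we have $\mu_{m-2}(\U)=1$ for each factor. This follows from the radial formula for $d\mu_{\alpha_k}$ used in the proof of Lemma \ref{lem:weighted-B}, because
\[
\int_0^1 2(m-1)(1-r^2)^{m-2}r\,dr=1 .
\]
Hence $\|1\|_{b^{mp}_{\mathbf{m-2}}(\U^n)}=1$, and so $C_{p,m,n}\geq1$ for every $1<p<\infty$.

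For the upper bound, restrict to $p>2$. Theorem \ref{thm:intro-high-p} shows that the diagonal inclusion holds with constant $D_m^{2n/p}$, so $C_{p,m,n}\leq D_m^{2n/p}$. Here $D_m=\sqrt2\cos(\pi/(4m))$ is a fixed positive number that depends only on $m$. Indeed,
\[
D_m\in\left[\sqrt2\cos(\pi/8),\sqrt2\right)
\]
for $m\geq2$, and it is in any case independent of $p$. Since $m$ and $n$ are fixed, the exponent $2n/p$ tends to $0$ as $p\to\infty$. Therefore
\[
D_m^{2n/p}=\exp\!\left(\frac{2n}{p}\log D_m\right)\to1 .
\]

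Combining the two bounds gives $1\leq C_{p,m,n}\leq D_m^{2n/p}$ for $p>2$, and letting $p\to\infty$ yields $\lim_{p\to\infty}C_{p,m,n}=1$. There is no real obstacle here; the only points to check are the normalization of $\mu_{\mathbf{m-2}}$, which makes the constant function give the lower bound exactly $1$, and that $D_m$ does not depend on $p$.
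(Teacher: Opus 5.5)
Your proposal is correct and follows the paper's own proof: the constant function gives $C_{p,m,n}\geq1$, Theorem~\ref{thm:intro-high-p} gives $C_{p,m,n}\leq D_m^{2n/p}$ for $p>2$, and this upper bound tends to $1$ as $p\to\infty$. Your explicit check that $\mu_{\mathbf{m-2}}$ is a probability measure, and your remark that $D_m$ does not depend on $p$, are details the paper leaves implicit.
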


\begin{proof}
The constant function shows that $C_{p,m,n}\geq1$. On the other hand, Theorem \ref{thm:intro-high-p} gives $C_{p,m,n}\leq D_m^{2n/p}$ for $p>2$, and the latter quantity tends to $1$ as $p\to\infty$.
\end{proof}

Recall the constant $\Gamma_{p,m,n}$ defined in \eqref{eq:Gamma-intro}. The preceding diagonal estimates prove \eqref{eq:intro-diag} for every $p>1$. Applying this estimate to each factor and using H\"older's inequality completes the proof of the main polydisc theorem.

\subsection{Proof of Theorem \ref{thm:intro-main}}
\begin{proof}
H\"older's inequality and \eqref{eq:intro-diag} give
\begin{align*}
 \int_{\U^n}\prod_{j=1}^m|f_j|^{p_j}\,d\mu_{\mathbf{m-2}}
 &\leq
 \prod_{j=1}^m
 \left(\int_{\U^n}|f_j|^{mp_j}\,d\mu_{\mathbf{m-2}}\right)^{1/m}\\
 &\leq
 \prod_{j=1}^m\Gamma_{p_j,m,n}^{p_j}
 \prod_{j=1}^m\|f_j\|_{h^{p_j}(\U^n)}^{p_j}.
\end{align*}
This is the assertion of Theorem \ref{thm:intro-main}. In particular, if $p_1,\ldots,p_m>2$, then $\Gamma_{p_j,m,n}^{p_j}\leq D_m^{2n}$ for every $j$, and hence
\[
 \int_{\U^n}\prod_{j=1}^m|f_j(z)|^{p_j}\,d\mu_{\mathbf{m-2}}(z)
 \leq D_m^{2mn}
 \prod_{j=1}^m\|f_j\|_{h^{p_j}(\U^n)}^{p_j}.
\]
The proof is complete.
\end{proof}

\section{Pavlovi\'c--Dostani\'c-Type Inequalities}

We now turn to the ball--polydisc inclusion. We need the following vector-valued form of the Pavlovi\'c--Dostani\'c theorem for finite-dimensional complex Hilbert spaces. The coefficient argument is included for completeness.

\begin{lemma}\label{lem:PD-vector}
Let $E$ be a finite-dimensional complex Hilbert space and $H\in H^2(\U^n;E)$. Write $H^*$ for its radial boundary function. Then
\begin{equation}\label{eq:PD-vector}
 \int_{\partial\B_n}\|H(\zeta)\|_E^{2n}\,d\sigma_n(\zeta)
 \leq
 \left(
 \int_{\T^n}\|H^*(\omega)\|_E^2\,dm_n(\omega)
 \right)^n.
\end{equation}
Here the left-hand side is understood in the usual Hardy sense, namely as the increasing radial limit
\[
 \lim_{r\to1^-}
 \int_{\partial\B_n}\|H(r\zeta)\|_E^{2n}\,d\sigma_n(\zeta),
\]
which is defined (finite or not).
\end{lemma}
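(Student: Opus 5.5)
We plan to linearize the $2n$-th power with a tensor power, so that the vector-valued inequality reduces to a purely combinatorial inequality between Taylor coefficient weights. This is the one the scalar Pavlovi\'c--Dostani\'c proof rests on.

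First we reduce to the dilates $H_r(z)=H(rz)$, $0<r<1$. These are holomorphic near $\overline{\U^n}\supset\overline{\B_n}$. Their torus means $\int_{\T^n}\|H(r\omega)\|_E^2\,dm_n$ increase to $\int_{\T^n}\|H^*\|_E^2\,dm_n$, because $H(r\,\cdot)\to H^*$ in $L^2(\T^n;E)$ componentwise (as in Corollary \ref{cor:square-Markovic}). The left-hand side of \eqref{eq:PD-vector} is by definition the increasing limit of the sphere integrals for $H_r$. It therefore suffices to prove \eqref{eq:PD-vector} for $H_r$, i.e.\ for $H=\sum_\alpha c_\alpha z^\alpha$ with $c_\alpha\in E$ and rapidly decaying coefficients.

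Next we consider the $E^{\otimes n}$-valued holomorphic map $G(z)=H(z)^{\otimes n}$. Since $\|x^{\otimes n}\|_{E^{\otimes n}}=\|x\|_E^n$, we get $\|H(\zeta)\|_E^{2n}=\|G(\zeta)\|^2$. Hence the left-hand side equals $\|G\|^2_{H^2(\B_n;E^{\otimes n})}$. Expanding,
\[
G(z)=\sum_\beta a_\beta z^\beta,\qquad a_\beta=\sum_{\alpha^1+\cdots+\alpha^n=\beta}c_{\alpha^1}\otimes\cdots\otimes c_{\alpha^n}.
\]
Orthogonality of monomials in $H^2(\B_n)$ gives
\[
\|G\|^2_{H^2(\B_n)}=\sum_\beta w_\beta\|a_\beta\|^2,\qquad w_\beta=\frac{(n-1)!\,\beta!}{(n-1+|\beta|)!}.
\]
The right-hand side expands as
\[
\Bigl(\sum_\alpha\|c_\alpha\|^2\Bigr)^n=\sum_\beta\ \sum_{\alpha^1+\cdots+\alpha^n=\beta}\prod_i\|c_{\alpha^i}\|^2 .
\]
By Cauchy--Schwarz in $E^{\otimes n}$, $\|a_\beta\|^2\le N_\beta\sum_{\alpha^1+\cdots+\alpha^n=\beta}\prod_i\|c_{\alpha^i}\|^2$. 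Here $N_\beta=\prod_{k=1}^n\binom{\beta_k+n-1}{n-1}$ is the number of ordered $n$-tuples of multi-indices summing to $\beta$. Hence it suffices to show $N_\beta w_\beta\le1$ for every $\beta$. Cancelling the factorials, this is equivalent to
\[
\prod_{k=1}^n(\beta_k+n-1)!\ \le\ ((n-1)!)^{n-1}\,(|\beta|+n-1)!.
\]

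The main obstacle is this factorial inequality; everything else is bookkeeping. We would prove it by induction on the number of factors. The inductive claim is that for $j$ factors, $\prod_{k=1}^{j}(\beta_k+n-1)!\le((n-1)!)^{j-1}(\beta_1+\cdots+\beta_j+n-1)!$. This reduces to the two-factor estimate $(a+n-1)!\,(b+n-1)!\le(n-1)!\,(a+b+n-1)!$. That estimate in turn is $\binom{a+n-1}{a}\le\binom{a+b+n-1}{a}$ after dividing by $a!\,(b+n-1)!\,(n-1)!$ appropriately, and this holds by monotonicity of binomial coefficients in the upper index. If the bookkeeping here proves delicate, we would instead observe that $N_\beta w_\beta\le1$ is exactly the coefficient inequality in Pavlovi\'c--Dostani\'c's scalar proof, and cite it. Either way the vector-valued statement follows, since the only new ingredient over the scalar case is Cauchy--Schwarz in the tensor space.
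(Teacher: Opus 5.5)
Your proposal is correct and follows the paper's proof essentially step by step: the tensor power $G=H^{\otimes n}$, Cauchy--Schwarz on the coefficients $d_q$, the bound $N_q\,(n-1)!\,q!/(n-1+|q|)!\leq1$, the $n$-th power expansion, and the passage $r\to1^-$. The only differences are cosmetic. Your two-factor binomial-monotonicity inequality is the same as the paper's $(n)_a(n)_b\leq(n)_{a+b}$. You also work directly with the absolutely convergent power series of the dilates, where the paper first proves the polynomial case and then takes Taylor partial sums of $H(\rho\,\cdot)$.
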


\begin{proof}
We modify the coefficient proof of Pavlovi\'c and Dostani\'c to the setting of Hilbert-space coefficients. We include the details because the notation is slightly heavier in the Hilbert-space-valued case.

First assume that $H$ is a polynomial, i.e.,
\[
 H(z)=\sum_\alpha c_\alpha z^\alpha,
\]
where the sum is over non-negative multi-indices $\alpha=(\alpha_1,\ldots,\alpha_n)$, each coefficient $c_\alpha$ belongs to $E$, and
\[
 |\alpha|=\alpha_1+\cdots+\alpha_n,
 \qquad \alpha!=\alpha_1!\cdots\alpha_n!,
 \qquad z^\alpha=z_1^{\alpha_1}\cdots z_n^{\alpha_n}.
\]
Let
\[
 G(z)=H(z)^{\otimes n}=H(z)\otimes\cdots\otimes H(z)
\]
be the $n$-fold tensor power of $H$. The tensor product is equipped with its natural Hilbert-space norm, so
\[
 \|G(z)\|^2=\|H(z)\|_E^{2n}.
\]
Write
\[
 G(z)=\sum_q d_qz^q.
\]
If $q=(q_1,\ldots,q_n)$, then the coefficient $d_q$ is obtained by collecting all products of coefficients whose multi-indices add up to $q$:
\[
 d_q=
 \sum_{\alpha^{(1)}+\cdots+\alpha^{(n)}=q}
 c_{\alpha^{(1)}}\otimes\cdots\otimes c_{\alpha^{(n)}}.
\]
Here the sum is over ordered $n$-tuples of multi-indices. By orthogonality of monomials on the unit sphere,
\[
 \int_{\partial\B_n}\|G(r\zeta)\|^2\,d\sigma_n(\zeta)
 =
 \sum_q \|d_q\|^2 r^{2|q|}
 \frac{(n-1)!\,q!}{(n-1+|q|)!},
\]
where $q!=q_1!\cdots q_n!$ and $|q|=q_1+\cdots+q_n$.

We now estimate $\|d_q\|^2$. Let $\mathcal D_q$ be the set of ordered decompositions
\[
 \alpha^{(1)}+\cdots+\alpha^{(n)}=q,
\]
and let $N_q=\#\mathcal D_q$. For each element of $\mathcal D_q$, set
\[
 v_{\alpha^{(1)},\ldots,\alpha^{(n)}}
 =c_{\alpha^{(1)}}\otimes\cdots\otimes c_{\alpha^{(n)}}.
\]
Then $d_q$ is the sum of these $N_q$ vectors. Hence the Hilbert-space Cauchy--Schwarz inequality gives
\[
 \|d_q\|^2
 \leq
 N_q
 \sum_{\alpha^{(1)}+\cdots+\alpha^{(n)}=q}
 \left\|c_{\alpha^{(1)}}\otimes\cdots\otimes c_{\alpha^{(n)}}\right\|^2.
\]
Since tensor-product norms multiply,
\[
 \left\|c_{\alpha^{(1)}}\otimes\cdots\otimes c_{\alpha^{(n)}}\right\|^2
 =
 \prod_{\ell=1}^n \|c_{\alpha^{(\ell)}}\|_E^2.
\]
Thus
\begin{equation}\label{eq:dq-estimate}
 \|d_q\|^2
 \leq
 N_q
 \sum_{\alpha^{(1)}+\cdots+\alpha^{(n)}=q}
 \prod_{\ell=1}^n \|c_{\alpha^{(\ell)}}\|_E^2.
\end{equation}
The number of ordered decompositions is
\[
 N_q=
 \prod_{k=1}^n \binom{n+q_k-1}{q_k}.
\]
Indeed, for each coordinate $k$, one has to distribute $q_k$ among $n$ non-negative integers, and the coordinates are independent.

We also need the elementary factorial estimate
\begin{equation}\label{eq:factorial-PD}
 N_q\frac{(n-1)!\,q!}{(n-1+|q|)!}\leq 1.
\end{equation}
For completeness, let us recall the verification. If $(n)_a=n(n+1)\cdots(n+a-1)$ denotes the rising factorial, then
\[
 \binom{n+q_k-1}{q_k}=\frac{(n)_{q_k}}{q_k!}.
\]
Therefore,
\[
 N_q\frac{(n-1)!\,q!}{(n-1+|q|)!}
 =
 \frac{\prod_{k=1}^n (n)_{q_k}}{(n)_{|q|}}.
\]
Since $(n)_a(n)_b\leq (n)_{a+b}$ for non-negative integers $a,b$, repeated use gives
\[
 \prod_{k=1}^n (n)_{q_k}\leq (n)_{q_1+\cdots+q_n}=(n)_{|q|},
\]
which proves \eqref{eq:factorial-PD}.

Combining Parseval's formula, \eqref{eq:dq-estimate}, and \eqref{eq:factorial-PD}, we obtain
\[
 \int_{\partial\B_n}\|H(r\zeta)\|_E^{2n}\,d\sigma_n(\zeta)
 \leq
 \sum_q r^{2|q|}
 \sum_{\alpha^{(1)}+\cdots+\alpha^{(n)}=q}
 \prod_{\ell=1}^n\|c_{\alpha^{(\ell)}}\|_E^2.
\]
The last double sum is exactly the expansion of an $n$-th power:
\[
 \sum_q r^{2|q|}
 \sum_{\alpha^{(1)}+\cdots+\alpha^{(n)}=q}
 \prod_{\ell=1}^n\|c_{\alpha^{(\ell)}}\|_E^2
 =
 \left(\sum_\alpha \|c_\alpha\|_E^2 r^{2|\alpha|}\right)^n.
\]
By orthogonality of monomials on the torus,
\[
 \sum_\alpha \|c_\alpha\|_E^2 r^{2|\alpha|}
 =
 \int_{\T^n}\|H(r\omega)\|_E^2\,dm_n(\omega).
\]
Hence, for polynomial $H$,
\[
 \int_{\partial\B_n}\|H(r\zeta)\|_E^{2n}\,d\sigma_n(\zeta)
 \leq
 \left(
 \int_{\T^n}\|H(r\omega)\|_E^2\,dm_n(\omega)
 \right)^n,
 \qquad 0<r<1.
\]

By continuity, the preceding polynomial inequality remains valid for $r=1$. For a general $H\in H^2(\U^n;E)$, fix $0<\rho<1$ and apply this radius-one estimate to the total-degree Taylor partial sums of $H(\rho\,\cdot)$. The latter function is holomorphic in the polydisc of radius $1/\rho>1$, so these partial sums converge uniformly on the closed unit polydisc, and hence also on the closed unit ball. Passing to the limit gives
\[
 \int_{\partial\B_n}\|H(\rho\zeta)\|_E^{2n}\,d\sigma_n(\zeta)
 \leq
 \left(
 \int_{\T^n}\|H(\rho\omega)\|_E^2\,dm_n(\omega)
 \right)^n.
\]
Finally, let $\rho\to1^-$. Since $z\mapsto\|H(z)\|_E^{2n}$ is plurisubharmonic, its spherical radial means over $\partial\B_n$ are increasing in the radius. Hence the left-hand side increases to the usual Hardy boundary integral on $\partial\B_n$, and the right-hand side tends to the $H^2(\U^n;E)$ boundary integral. This proves \eqref{eq:PD-vector}.
\end{proof}

The proof also uses the following unit-ball form of the reverse Riesz inequality. We state it at the exponent needed below; the surface measure is normalized as in Section~1. This is Chen and Hamada's theorem \cite[Theorem~2.1(B1(II))]{ChenHamada} specialized to $p=2n$, since $B_{2n}=D_n$.

\begin{lemma}\label{lem:CH-ball}
Let
\[
 F=H+\overline G
\]
be pluriharmonic in a neighbourhood of $\overline{\B_n}$, where $H$ and $G$ are holomorphic in that neighbourhood. If
\[
 \operatorname{Re}(G(0)H(0))\leq0,
\]
then
\[
 \left(
 \int_{\partial\B_n}|F(\zeta)|^{2n}\,d\sigma_n(\zeta)
 \right)^{1/(2n)}
 \leq D_n
 \left(
 \int_{\partial\B_n}
 \bigl(|H(\zeta)|^2+|G(\zeta)|^2\bigr)^n
 \,d\sigma_n(\zeta)
 \right)^{1/(2n)}.
\]
\end{lemma}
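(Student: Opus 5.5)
The plan is to deduce the lemma from the one-variable (disc) case of the reverse Riesz inequality by slicing the ball into complex discs through the origin. The direct route is simply to cite Chen and Hamada \cite[Theorem~2.1(B1(II))]{ChenHamada} for the unit ball with $p=2n$ and note that $B_{2n}=\sqrt2\cos(\pi/(4n))=D_n$. To keep the argument transparent and close to the polydisc statement already quoted in Theorem~\ref{thm:CH}, I would instead derive the ball form from its case $n=1$, since that is the only genuinely analytic input.

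\textbf{Slice representation.} Because $F$, $H$, $G$ are continuous on $\overline{\B_n}$, all boundary integrals are honest integrals of continuous functions, and no radial limits are needed. The normalized measure $\sigma_n$ is invariant under the circle action $\zeta\mapsto e^{i\theta}\zeta$. Hence, for every continuous $u$ on $\partial\B_n$,
\[
 \int_{\partial\B_n}u\,d\sigma_n=\int_{\partial\B_n}\int_{\T}u(\lambda\zeta)\,dm_1(\lambda)\,d\sigma_n(\zeta).
\]

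\textbf{Disc estimate on each slice.} Fix $\zeta\in\partial\B_n$ and set
\[
 F_\zeta(\lambda)=F(\lambda\zeta),\qquad H_\zeta(\lambda)=H(\lambda\zeta),\qquad G_\zeta(\lambda)=G(\lambda\zeta).
\]
Then $F_\zeta=H_\zeta+\overline{G_\zeta}$ is harmonic in a neighbourhood of $\overline\U$, and $H_\zeta$, $G_\zeta$ are holomorphic there. The sign condition is inherited, because $H_\zeta(0)=H(0)$ and $G_\zeta(0)=G(0)$, so $\operatorname{Re}(G_\zeta(0)H_\zeta(0))\leq0$. Applying \eqref{eq:CH-B} of Theorem~\ref{thm:CH} with $n=1$ and $p=2n$ gives
\[
 \int_{\T}|F(\lambda\zeta)|^{2n}\,dm_1(\lambda)\leq B_{2n}^{2n}\int_{\T}\bigl(|H(\lambda\zeta)|^2+|G(\lambda\zeta)|^2\bigr)^{n}\,dm_1(\lambda).
 \]
On the left, the $h^{2n}(\U)$ norm equals the boundary integral by continuity and the monotonicity of circular means of the subharmonic function $|F_\zeta|^{2n}$. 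On the right, the radial limit is the boundary integral by continuity.

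\textbf{Integration over the sphere.} I would integrate this slice inequality over $\zeta\in\partial\B_n$ with respect to $\sigma_n$ and use the slice representation on both sides. This yields the inequality of the lemma with constant $B_{2n}^{2n}$, and taking $2n$-th roots together with $B_{2n}=D_n$ finishes the proof.

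The only point requiring care is the applicability of the $n=1$ case of Theorem~\ref{thm:CH} to each slice. This needs the sign condition to be preserved, which holds because the slices all pass through the origin, and it needs the functions to be regular up to the boundary, which follows from the neighbourhood hypothesis. I expect no real obstacle beyond this. If one prefers not to rely on the polydisc statement at $n=1$, the same slice inequality is exactly Kalaj's sharp planar reverse Riesz inequality \cite{Kalaj2019}, which can be cited in its place.
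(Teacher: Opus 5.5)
Your argument is correct, but it takes a different route from the paper. The paper gives no proof of this lemma: it quotes the unit-ball case of Chen and Hamada's theorem at $p=2n$ and observes that $B_{2n}=D_n$. You note that citation route yourself.

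Your version instead reduces the ball statement to the planar reverse Riesz inequality by slicing along complex lines through the origin. It combines four ingredients:
\begin{itemize}
\item the rotation-invariance identity $\int_{\partial\B_n}u\,d\sigma_n=\int_{\partial\B_n}\int_{\T}u(\lambda\zeta)\,dm_1(\lambda)\,d\sigma_n(\zeta)$;
\item the fact that each slice $\lambda\mapsto F(\lambda\zeta)$ is harmonic on a disc of radius $1+\delta$, which follows from compactness of $\overline{\B_n}$;
\item the fact that every slice has the same values $H(0)$, $G(0)$ at the origin, so the sign condition is inherited;
\item Fubini's theorem.
\end{itemize}
Each of these steps checks out. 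So does the identification of the $h^{2n}(\U)$ norm of a slice with its boundary integral, via monotone circular means of the subharmonic function $|F_\zeta|^{2n}$.

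What your route buys is that the lemma then rests only on the one-variable inequality of Kalaj, equivalently the case $n=1$ of Theorem~\ref{thm:CH}. The unit-ball part of Chen--Hamada is then no longer needed. It also makes transparent why the ball and polydisc forms carry the same dimension-free constants and sign conditions: the same slicing works verbatim on $\T^n$, whose measure $dm_n$ is invariant under the diagonal rotation $\omega\mapsto\lambda\omega$.

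The paper's citation is shorter and places the lemma inside a general theory for bounded symmetric domains. For the present lemma, however, it yields nothing beyond what your slicing argument already gives.
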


We are now ready to prove the ball--polydisc theorem.

\subsection{Proof of Theorem \ref{thm:intro-PD}}
\begin{proof}
Write
\[
 f=h+\overline g,\qquad g(0)=0,
\]
where $h$ and $g$ are holomorphic in $\U^n$. Define, as earlier,
\[
 H=(h,g),
\]
and
\[
 Q(z)=\|H(z)\|=\bigl(|h(z)|^2+|g(z)|^2\bigr)^{1/2}.
\]

Fix $0<r<1$ and apply Lemma \ref{lem:CH-ball} to
\[
 F(z)=f(rz),\qquad H_r(z)=h(rz),\qquad G_r(z)=g(rz).
\]
Since $g(0)=0$, we have
\[
 \operatorname{Re}(G_r(0)H_r(0))
 =\operatorname{Re}(g(0)h(0))=0\leq0,
\]
so the required sign condition is satisfied. Hence
\[
 \left(
 \int_{\partial\B_n}|f(r\zeta)|^{2n}\,d\sigma_n(\zeta)
 \right)^{1/(2n)}
 \leq
 D_n
 \left(
 \int_{\partial\B_n}Q(r\zeta)^{2n}\,d\sigma_n(\zeta)
 \right)^{1/(2n)}.
\]

Next, apply Lemma \ref{lem:PD-vector} to the vector-valued holomorphic function $z\mapsto H(rz)$. Since $Q=\|H\|$, we get
\[
 \int_{\partial\B_n}Q(r\zeta)^{2n}\,d\sigma_n(\zeta)
 \leq
 \left(
 \int_{\T^n}Q(r\omega)^2\,dm_n(\omega)
 \right)^n.
\]
Combining the last two estimates gives
\[
 \left(
 \int_{\partial\B_n}|f(r\zeta)|^{2n}\,d\sigma_n(\zeta)
 \right)^{1/(2n)}
 \leq
 D_n
 \left(
 \int_{\T^n}Q(r\omega)^2\,dm_n(\omega)
 \right)^{1/2}.
\]

It remains to identify the $L^2$-norm of $Q$ on the torus. For $0<s<1$,
\[
 |f(s\omega)|^2
 =|h(s\omega)+\overline{g(s\omega)}|^2
 =|h(s\omega)|^2+|g(s\omega)|^2
 +h(s\omega)g(s\omega)+\overline{h(s\omega)g(s\omega)}.
\]
Since $hg$ is holomorphic in $\U^n$,
\[
 \int_{\T^n}h(s\omega)g(s\omega)\,dm_n(\omega)=h(0)g(0)=0,
\]
and the conjugate term also has integral zero. Hence
\[
 \int_{\T^n}|f(s\omega)|^2\,dm_n(\omega)
 =
 \int_{\T^n}Q(s\omega)^2\,dm_n(\omega)
\]
for every $0<s<1$. In particular,
\[
 \left(
 \int_{\T^n}Q(r\omega)^2\,dm_n(\omega)
 \right)^{1/2}
 \leq
 \|f\|_{h^2(\U^n)}.
\]
Therefore
\[
 \left(
 \int_{\partial\B_n}|f(r\zeta)|^{2n}\,d\sigma_n(\zeta)
 \right)^{1/(2n)}
 \leq
 D_n\|f\|_{h^2(\U^n)}
\]
for every $0<r<1$. Taking the supremum over $r$ proves the theorem.
\end{proof}

\medskip
The surface estimate has a corresponding volume form. It follows by adding an auxiliary variable and using the marginal formula for normalized surface measure.

\begin{corollary}
Let $f\in h^2(\U^n)$. Then
\[
 \int_{\B_n}|f(z)|^{2n+2}\,d\nu_n(z)
 \leq
 D_{n+1}^{2n+2}\|f\|_{h^2(\U^n)}^{2n+2}.
\]
Equivalently,
\[
 \left(
 \int_{\B_n}|f(z)|^{2n+2}\,d\nu_n(z)
 \right)^{1/(2n+2)}
 \leq
 D_{n+1}\|f\|_{h^2(\U^n)}.
\]
\end{corollary}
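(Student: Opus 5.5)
The plan is to view $\B_n$ as the ``shadow'' of the sphere $\partial\B_{n+1}\subset\C^{n+1}$ and to apply Theorem~\ref{thm:intro-PD} in dimension $n+1$ to a function that ignores the extra variable. Given $f\in h^2(\U^n)$, define
\[
 \tilde f(z,w)=f(z),\qquad (z,w)\in\U^n\times\U=\U^{n+1}.
\]
This function is pluriharmonic in $\U^{n+1}$: writing $f=h+\overline g$ with $g(0)=0$, we have $\tilde f=\tilde h+\overline{\tilde g}$ with $\tilde g(0)=0$. Its torus means do not depend on $w$, so $\|\tilde f\|_{h^2(\U^{n+1})}=\|f\|_{h^2(\U^n)}$. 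Applying Theorem~\ref{thm:intro-PD} in dimension $n+1$ then gives
\[
 \sup_{0<r<1}\int_{\partial\B_{n+1}}|f(r\zeta')|^{2n+2}\,d\sigma_{n+1}(\zeta',\zeta_{n+1})
 \leq D_{n+1}^{2n+2}\|f\|_{h^2(\U^n)}^{2n+2},
\]
where $\zeta'=(\zeta_1,\ldots,\zeta_n)$. This is where the exponent $2n+2$ and the constant $D_{n+1}$ in the statement come from.

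Next I will use the marginal formula for normalized surface measure (Rudin, \emph{Function Theory in the Unit Ball}, 1.4.4(1)). For a function depending only on the first $n$ coordinates,
\[
 \int_{\partial\B_{n+1}}\phi(\zeta')\,d\sigma_{n+1}(\zeta)=\int_{\B_n}\phi(z)\,d\nu_n(z).
\]
This holds because the projection of $\sigma_{n+1}$ onto $\C^{n}$, which has real codimension $2$, is exactly normalized Lebesgue measure $\nu_n$. A quick check with $\phi\equiv1$ confirms the normalization. Applying it with $\phi(z)=|f(rz)|^{2n+2}$, which is continuous on $\overline{\B_n}$ for fixed $r<1$, gives
\[
 \int_{\B_n}|f(rz)|^{2n+2}\,d\nu_n(z)\leq D_{n+1}^{2n+2}\|f\|_{h^2(\U^n)}^{2n+2}
 \quad\text{for every } 0<r<1.
\]

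Finally I will let $r\to1^-$ and use Fatou's lemma, with pointwise convergence $f(rz)\to f(z)$ on $\B_n\subset\U^n$. This removes the dilation and gives the first inequality; taking $(2n+2)$-th roots gives the equivalent form.

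The main obstacle is the bookkeeping in the first step rather than any analysis. One has to confirm that Theorem~\ref{thm:intro-PD} really applies to $\tilde f$: the restriction of $\tilde f$ to $\B_{n+1}$ makes sense because $\B_{n+1}\subset\U^{n+1}$, and the supremum defining the $h^{2n+2}(\B_{n+1})$ norm controls each fixed-$r$ spherical integral. One also has to confirm that the marginal identity is used with the paper's normalizations $\sigma_{n+1}(\partial\B_{n+1})=\nu_n(\B_n)=1$.
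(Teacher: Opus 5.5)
Your proposal is correct and matches the paper's proof: lift $f$ to $\U^{n+1}$ independently of the last variable, apply Theorem~\ref{thm:intro-PD} in dimension $n+1$, use the marginal identity $\int_{\partial\B_{n+1}}\Phi(\zeta_1,\ldots,\zeta_n)\,d\sigma_{n+1}(\zeta)=\int_{\B_n}\Phi\,d\nu_n$, and remove the dilation with Fatou's lemma. The only cosmetic difference is that the paper dilates first, applying the theorem to $F_\rho(z)=f(\rho z_1,\ldots,\rho z_n)$, which is regular up to $\partial\B_{n+1}$, whereas you obtain the fixed-$r$ spherical bound from the supremum in the $h^{2n+2}(\B_{n+1})$ norm; both routes work equally well.
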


\begin{proof}
For $0<\rho<1$, define a function in $\U^{n+1}$ by
\[
 F_\rho(z_1,\ldots,z_n,z_{n+1})=f(\rho z_1,\ldots,\rho z_n).
\]
The last variable is only auxiliary: $F_\rho$ does not depend on $z_{n+1}$. If $f=h+\overline g$, then
\[
 F_\rho=H_\rho+\overline{G_\rho},
\]
where
\[
 H_\rho(z_1,\ldots,z_{n+1})=h(\rho z_1,\ldots,\rho z_n),
 \qquad
 G_\rho(z_1,\ldots,z_{n+1})=g(\rho z_1,\ldots,\rho z_n).
\]
Thus $F_\rho$ is pluriharmonic in $\U^{n+1}$, and $G_\rho(0)=g(0)=0$.

For $0<s<1$,
\[
 \int_{\T^{n+1}}|F_\rho(s\omega)|^2\,dm_{n+1}(\omega)
 =
 \int_{\T^n}|f(\rho s\omega')|^2\,dm_n(\omega'),
\]
where $\omega'=(\omega_1,\ldots,\omega_n)$. Taking the supremum over $s$ gives
\[
 \|F_\rho\|_{h^2(\U^{n+1})}
 \leq
 \|f\|_{h^2(\U^n)}.
\]
Apply Theorem \ref{thm:intro-PD} in dimension $n+1$ to $F_\rho$. We get
\[
 \|F_\rho\|_{h^{2n+2}(\B_{n+1})}
 \leq
 D_{n+1}\|F_\rho\|_{h^2(\U^{n+1})}
 \leq
 D_{n+1}\|f\|_{h^2(\U^n)}.
\]
Because $\rho<1$, the function $F_\rho$ is defined in a neighbourhood of $\overline{\B_{n+1}}$. Hence
\[
 \int_{\partial\B_{n+1}}|F_\rho(\zeta)|^{2n+2}\,d\sigma_{n+1}(\zeta)
 \leq
 D_{n+1}^{2n+2}\|f\|_{h^2(\U^n)}^{2n+2}.
\]

The push-forward of $d\sigma_{n+1}$ under the projection onto the first $n$ coordinates is $d\nu_n$. Indeed, apart from a set of surface measure zero, we may write
\[
 \zeta=\bigl(z,\sqrt{1-|z|^2}\,e^{i\theta}\bigr),
 \qquad z\in\B_n,\quad 0\leq\theta<2\pi.
\]
Writing $dV$ for Lebesgue volume and $r(z)=\sqrt{1-|z|^2}$, the induced metric has determinant
\[
 r(z)^2\bigl(1+|\nabla r(z)|^2\bigr)=1.
\]
Thus the surface element in these coordinates is $dV(z)\,d\theta$; see also \cite[Section~1.4.5]{RudinBall}. Since the surface area of $\partial\B_{n+1}$ is $2\pi^{n+1}/n!$, normalization gives
\[
 d\nu_n(z)=\frac{n!}{\pi^n}\,dV(z)
\]
and, whenever $\Phi$ depends only on the first $n$ variables,
\[
 \int_{\partial\B_{n+1}}\Phi(\zeta_1,\ldots,\zeta_n)\,d\sigma_{n+1}(\zeta)
 =
 \int_{\B_n}\Phi(z)\,d\nu_n(z).
\]
Using this with $\Phi(z)=|f(\rho z)|^{2n+2}$ gives
\[
 \int_{\B_n}|f(\rho z)|^{2n+2}\,d\nu_n(z)
 \leq
 D_{n+1}^{2n+2}\|f\|_{h^2(\U^n)}^{2n+2}.
\]
Letting $\rho\to1^-$ and using Fatou's lemma gives
\[
 \int_{\B_n}|f(z)|^{2n+2}\,d\nu_n(z)
 \leq
 D_{n+1}^{2n+2}\|f\|_{h^2(\U^n)}^{2n+2},
\]
which completes the proof.
\end{proof}

\section{Endpoint and Sub-Endpoint Estimates}\label{sec:endpoint}

We begin with the failure of the ordinary strong endpoint estimate. For $0<a<1$, let
\[
 f_a(z)=\frac{1-a^2|z|^2}{|1-az|^2}
 =\operatorname{Re}\frac{1+az}{1-az},
 \qquad z\in\U.
\]
This positive harmonic function has circular means equal to $1$, and hence $\|f_a\|_{h^1(\U)}=1$. Put $\varepsilon=1-a$, write $z=re^{i\theta}$, and set $t=1-r$. For $a$ sufficiently close to $1$, on the region
\[
 2\varepsilon\leq t\leq\frac14,
 \qquad |\theta|\leq t,
\]
we have $1-ar=\varepsilon+t-\varepsilon t\asymp t$ and $1-r^2\asymp t$. Moreover,
\[
 f_a(re^{i\theta})
 =\frac{(1-ar)(1+ar)}{(1-ar)^2+2ar(1-\cos\theta)},
\]
so $1-\cos\theta\leq\theta^2/2$ gives $f_a(re^{i\theta})\geq c/t$. Therefore, for every $m\geq2$,
\[
 \int_{\U}|f_a(z)|^m\,d\mu_{m-2}(z)
 \geq c_m\int_{2\varepsilon}^{1/4}\frac{dt}{t}
 \longrightarrow\infty
 \qquad\text{as }a\to1^-.
\]
Thus no constant $C$ can make
\[
 \|f\|_{b^m_{m-2}(\U)}\leq C\|f\|_{h^1(\U)}
\]
valid for all harmonic functions. Considering $f_a(z_1)$ gives the same failure in every dimension. Nevertheless, the square-function method provides a natural substitute once the square function itself is controlled.

For a pluriharmonic function $f=h+\overline g$ with $g(0)=0$, put
\[
 H_f=(h,g),\qquad
 Q_f(z)=\bigl(|h(z)|^2+|g(z)|^2\bigr)^{1/2}.
\]
Define
\[
 h_Q^1(\U^n)=
 \{f=h+\overline g:\ h,g\in\Hol(\U^n),\ g(0)=0,\ H_f\in H^1(\U^n;\C^2)\},
\]
with
\[
 \|f\|_{h_Q^1(\U^n)}
 :=\|H_f\|_{H^1(\U^n;\C^2)}
 =\sup_{0<r<1}\int_{\T^n}Q_f(r\omega)\,dm_n(\omega).
\]
The pointwise inequality $|f|\leq\sqrt2Q_f$ gives
\[
 \|f\|_{h^1(\U^n)}\leq\sqrt2\|f\|_{h_Q^1(\U^n)}.
\]
Thus $h_Q^1(\U^n)$ is continuously embedded in $h^1(\U^n)$, and it provides a natural sufficient endpoint hypothesis for the square-function transfer. Under this hypothesis, the following estimate replaces the failed ordinary $h^1$ inclusion.

\begin{theorem}\label{thm:endpoint-square}
Let $m\in\N$, $m\geq2$. If $f\in h_Q^1(\U^n)$, then
\[
 \|f\|_{b^m_{\mathbf{m-2}}(\U^n)}
 \leq
 B_m\|f\|_{h_Q^1(\U^n)}.
\]
\end{theorem}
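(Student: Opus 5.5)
The plan is to bypass the failed $h^1$ Riesz comparison \eqref{eq:CH-A} entirely, since the hypothesis already controls the square function, and keep only the reverse comparison \eqref{eq:CH-B} at the exponent $q=m>1$. Write $f=h+\overline g$ with $g(0)=0$, $H_f=(h,g)$ and $Q_f=\|H_f\|$. The chain of inequalities will be
\[
 \int_{\U^n}|f|^m\,d\mu_{\mathbf{m-2}}
 \leq B_m^m\int_{\U^n}Q_f^m\,d\mu_{\mathbf{m-2}}
 \leq B_m^m\|H_f\|_{H^1(\U^n;\C^2)}^m
 = B_m^m\|f\|_{h_Q^1(\U^n)}^m ,
\]
after which taking $m$-th roots gives the theorem.

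First, apply Corollary \ref{cor:square-Markovic} with $p=1$. This is allowed because the corollary, and Proposition \ref{prop:vector-polydisc} behind it, require only $1\leq p<\infty$. Since $H_f\in H^1(\U^n;\C^2)$ by assumption, it yields
\[
 \int_{\U^n}Q_f^m\,d\mu_{\mathbf{m-2}}\leq\|H_f\|_{H^1(\U^n;\C^2)}^m<\infty .
\]
This settles the second inequality in the chain. It also shows that the right-hand side of \eqref{eq:weighted-B} is finite for $q=m$ and $\boldsymbol\alpha=\mathbf{m-2}$.

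Second, invoke Lemma \ref{lem:weighted-B} with $q=m$; here $1<m<\infty$ because $m\geq2$, and $\boldsymbol\alpha=\mathbf{m-2}>-\mathbf1$. The normalization $g(0)=0$ is exactly the hypothesis of that lemma. Note that the lemma never assumes $f\in h^q$: it works on the dilates $f_{\boldsymbol r}$, which are smooth up to the boundary, and then integrates over the radii. So \eqref{eq:weighted-B} applies directly and gives the first inequality in the chain.

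I expect the only delicate point to be this legitimacy check, namely confirming that no hidden $h^1$ or $h^m$ membership of $f$ is needed, since the $h^1$ inclusion itself fails by the counterexample above. Lemma \ref{lem:weighted-B} holds for any pluriharmonic $f$ once its right-hand side is finite, and Corollary \ref{cor:square-Markovic} supplies that finiteness. The constant is then $B_m$, with no $A_p$ factor, which is consistent with the statement.
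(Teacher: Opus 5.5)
Your proof is correct and is the paper's argument: Corollary~\ref{cor:square-Markovic} with $p=1$ bounds $\int_{\U^n}Q_f^m\,d\mu_{\mathbf{m-2}}$ by $\|f\|_{h_Q^1(\U^n)}^m$, and Lemma~\ref{lem:weighted-B} with $q=m$ and $\boldsymbol\alpha=\mathbf{m-2}$ then supplies the factor $B_m^m$. You also correctly note that the lemma needs only finiteness of its right-hand side, not membership $f\in h^m$, which is what makes the chain legitimate.
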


\begin{proof}
Since $H_f\in H^1(\U^n;\C^2)$, Corollary \ref{cor:square-Markovic}, applied with $p=1$, gives
\[
 \int_{\U^n}Q_f(z)^m\,d\mu_{\mathbf{m-2}}(z)
 \leq
 \|f\|_{h_Q^1(\U^n)}^m.
\]
Applying Lemma \ref{lem:weighted-B} with $q=m$ and $\boldsymbol\alpha=\mathbf{m-2}$, we obtain
\[
 \int_{\U^n}|f(z)|^m\,d\mu_{\mathbf{m-2}}(z)
 \leq
 B_m^m\int_{\U^n}Q_f(z)^m\,d\mu_{\mathbf{m-2}}(z).
\]
Taking $m$-th roots proves the theorem.
\end{proof}

H\"older's inequality turns the preceding endpoint inclusion into a multilinear estimate.

\begin{corollary}\label{cor:endpoint-multilinear}
Let $m\in\N$, $m\geq2$, and suppose that $f_j\in h_Q^1(\U^n)$, $j=1,\ldots,m$. Then
\[
 \int_{\U^n}\prod_{j=1}^m |f_j(z)|\,d\mu_{\mathbf{m-2}}(z)
 \leq
 B_m^m\prod_{j=1}^m \|f_j\|_{h_Q^1(\U^n)}.
\]
\end{corollary}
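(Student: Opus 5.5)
The plan is to combine H\"older's inequality with Theorem \ref{thm:endpoint-square}, exactly as the main polydisc theorem was derived from its diagonal estimate.

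First apply H\"older's inequality with $m$ equal exponents $m$ to the measure $d\mu_{\mathbf{m-2}}$:
\[
 \int_{\U^n}\prod_{j=1}^m|f_j|\,d\mu_{\mathbf{m-2}}
 \leq
 \prod_{j=1}^m\left(\int_{\U^n}|f_j|^m\,d\mu_{\mathbf{m-2}}\right)^{1/m}
 =\prod_{j=1}^m\|f_j\|_{b^m_{\mathbf{m-2}}(\U^n)}.
\]
This is legitimate because each factor on the right is finite by the endpoint theorem. Next bound each factor by Theorem \ref{thm:endpoint-square}:
\[
 \|f_j\|_{b^m_{\mathbf{m-2}}(\U^n)}\leq B_m\|f_j\|_{h_Q^1(\U^n)}.
\]
Multiplying the $m$ factors gives the constant $B_m^m$ and yields the assertion.

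There is no genuine obstacle here. The only points to check are that H\"older is applied to nonnegative measurable integrands, so no integrability assumption is needed beyond what the endpoint theorem already supplies. One should also note that if some $f_j$ has $\|f_j\|_{h_Q^1}=0$, then $f_j\equiv0$, and both sides vanish trivially.

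Alternatively, one could avoid Theorem \ref{thm:endpoint-square} and apply Proposition \ref{prop:vector-polydisc} directly to $H_{f_j}$ with $p_j=1$, inserting Lemma \ref{lem:weighted-B} in between. That route still needs H\"older to separate the factors before the reverse Riesz comparison. So the direct H\"older-plus-diagonal route above is the one to take.
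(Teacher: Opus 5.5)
Your proof is correct and is exactly the paper's argument: H\"older's inequality with $m$ equal exponents, followed by Theorem~\ref{thm:endpoint-square} applied to each factor. Your side remarks, that nonnegativity makes H\"older valid without integrability assumptions and that the case $\|f_j\|_{h_Q^1}=0$ is trivial, are correct but not needed.
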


\begin{proof}
This follows immediately by applying H\"older's inequality and then Theorem \ref{thm:endpoint-square} to each factor.
\end{proof}

When $m=2$, we have $B_2=1$. Constant functions give equality in Theorem \ref{thm:endpoint-square} and in Corollary \ref{cor:endpoint-multilinear}; hence the constants in these two endpoint statements are sharp in this case.

For $0<p\le 1$, one can instead require the holomorphic and co-holomorphic parts to belong separately to holomorphic Hardy spaces. Set
\[
 h^p_{\mathrm{split}}(\U^n)
 =\{f=h+\overline g:\ h,g\in H^p(\U^n),\ g(0)=0\}.
\]
At $p=1$, this is the same set as $h_Q^1(\U^n)$. More precisely,
\[
 \max\{\|h\|_{H^1},\|g\|_{H^1}\}
 \leq\|f\|_{h_Q^1}
 \leq\|h\|_{H^1}+\|g\|_{H^1}
 \leq\sqrt2\|f\|_{h_Q^1}.
\]
For the last inequality, integrate $|h|+|g|\leq\sqrt2Q_f$ on the same internal tori $r\T^n$ and let $r$ tend to $1$; the radial means of $|h|$ and $|g|$ are increasing.
Thus the square-function and split conditions are equivalent at the endpoint. For $0<p<1$, the split condition is stronger than $f\in h^p(\U^n)$ in general. Under this hypothesis, the holomorphic inequality can be applied separately to the two parts.

\begin{proposition}
Let $m\in\N$, $m\geq2$, $0<p\leq1$, and set $q=mp$. Suppose
\[
 f=h+\overline g,\qquad h,g\in H^p(\U^n),\qquad g(0)=0.
\]
Then the following estimates hold.

\noindent{\rm (i)} If $q\geq1$, then
\[
 \|f\|_{b^q_{\mathbf{m-2}}(\U^n)}
 \leq
 \|h\|_{H^p(\U^n)}+\|g\|_{H^p(\U^n)}.
\]

\noindent{\rm (ii)} If $0<q<1$, then
\[
 \int_{\U^n}|f(z)|^q\,d\mu_{\mathbf{m-2}}(z)
 \leq
 \|h\|_{H^p(\U^n)}^q+\|g\|_{H^p(\U^n)}^q.
\]
Equivalently,
\[
 \|f\|_{b^q_{\mathbf{m-2}}(\U^n)}
 \leq
 \left(\|h\|_{H^p(\U^n)}^q+\|g\|_{H^p(\U^n)}^q\right)^{1/q}.
\]
\end{proposition}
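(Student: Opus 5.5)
We apply Markovi\'c's holomorphic polydisc inequality \eqref{eq:Markovic-polydisc}, in its diagonal form, separately to $h$ and to $g$. Then we recombine the two pieces through the triangle inequality in $L^q(d\mu_{\mathbf{m-2}})$ when $q\geq1$, and through the $q$-triangle inequality when $0<q<1$. The pointwise bound is simply
\[
|f(z)|\leq|h(z)|+|g(z)|,\qquad z\in\U^n,
\]
and no Riesz-type comparison is needed, which is why the constant is one.

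The key step is the holomorphic inclusion $H^p(\U^n)\subset A^{mp}_{\mathbf{m-2}}(\U^n)$ with norm one. Taking $F_1=\cdots=F_m=h$ and $p_1=\cdots=p_m=p$ in \eqref{eq:Markovic-polydisc} gives
\[
\int_{\U^n}|h|^{q}\,d\mu_{\mathbf{m-2}}\leq\|h\|_{H^p(\U^n)}^{q},
\]
and likewise for $g$. Markovi\'c's theorem is stated for all $p_j>0$, so the range $0<p<1$ is covered. The boundary integrals there are to be read as $H^p$ quasi-norms, which is legitimate since $\int_{\T^n}|F(r\omega)|^p\,dm_n$ increases to the boundary integral for $F\in H^p(\U^n)$.

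If one prefers to stay within the results proved in this paper, Proposition~\ref{prop:vector-polydisc} requires $p_j\geq1$. In that case I would instead reprove the scalar case for small $p$ by the standard inner--outer factorisation trick: choose $G\in H^1$ with $|G|=|h|^p$ on $\T^n$ and $|h|^p\leq|G|$ inside. On the polydisc this factorisation is not available globally, so I would run the same induction on $n$ as in Proposition~\ref{prop:vector-polydisc}, with Lemma~\ref{lem:one-variable-log} applied to $u_j=|h|^p$. That function is log-subharmonic in each variable, and Lemma~\ref{lem:one-variable-log} imposes no restriction on $p$. The obstacle is the Banach-space step, since $L^p$ with $p<1$ is not a Banach space. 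I would circumvent it by noting that only the scalar functions $z'\mapsto\int_\T|h(z',\eta)|^p\,dm_1(\eta)$ are needed in the induction, and these are log-plurisubharmonic because they are integrals of log-plurisubharmonic functions; this is the classical fact that sums and integrals of log-subharmonic functions are log-subharmonic. This is the main technical point, but citing Markovi\'c directly avoids it.

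To finish, for (i) with $q\geq1$, Minkowski's inequality in $L^q(\U^n,d\mu_{\mathbf{m-2}})$ gives
\[
\|f\|_{b^q_{\mathbf{m-2}}}\leq\|h\|_{A^q_{\mathbf{m-2}}}+\|g\|_{A^q_{\mathbf{m-2}}}\leq\|h\|_{H^p}+\|g\|_{H^p}.
\]
For (ii) with $0<q<1$, the elementary inequality $(a+b)^q\leq a^q+b^q$ yields $|f|^q\leq|h|^q+|g|^q$ pointwise. Integrating against $d\mu_{\mathbf{m-2}}$ and applying the two holomorphic bounds gives the asserted estimate, and taking $q$-th roots gives the equivalent form.
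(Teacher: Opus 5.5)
Your proof is correct and matches the paper's: you apply Markovi\'c's diagonal inequality \eqref{eq:Markovic-polydisc}, which holds for all $p>0$, separately to $h$ and $g$, then combine the two bounds by Minkowski's inequality in $L^q(d\mu_{\mathbf{m-2}})$ when $q\geq1$ and by $|a+b|^q\leq|a|^q+|b|^q$ when $0<q<1$. Your self-contained detour is not needed, but it would work if the induction in Proposition~\ref{prop:vector-polydisc} is restated for continuous functions that are log-subharmonic in each variable separately, so that the scalar fibre integrals $z'\mapsto\int_{\T}|h(z',\eta)|^p\,dm_1(\eta)$ satisfy the induction hypothesis.
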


\begin{proof}
For every holomorphic $F\in H^p(\U^n)$, Markovi\'c's diagonal inequality \eqref{eq:Markovic-polydisc} gives
\begin{equation}\label{eq:holomorphic-subendpoint}
 \int_{\U^n}|F(z)|^{mp}\,d\mu_{\mathbf{m-2}}(z)
 \leq
 \|F\|_{H^p(\U^n)}^{mp}.
\end{equation}
If $q=mp\geq1$, then Minkowski's inequality in $L^q(d\mu_{\mathbf{m-2}})$ and \eqref{eq:holomorphic-subendpoint} give
\[
 \|f\|_{b^q_{\mathbf{m-2}}}
 \leq
 \|h\|_{A^q_{\mathbf{m-2}}}+\|g\|_{A^q_{\mathbf{m-2}}}
 \leq
 \|h\|_{H^p}+\|g\|_{H^p}.
\]
If $0<q<1$, then the pointwise inequality $|a+b|^q\leq |a|^q+|b|^q$ and \eqref{eq:holomorphic-subendpoint} imply
\[
 \int_{\U^n}|f|^q\,d\mu_{\mathbf{m-2}}
 \leq
 \int_{\U^n}|h|^q\,d\mu_{\mathbf{m-2}}
 +\int_{\U^n}|g|^q\,d\mu_{\mathbf{m-2}}
 \leq
 \|h\|_{H^p}^q+\|g\|_{H^p}^q.
\]
This proves both assertions.
\end{proof}

\section{Explicit Lower Bounds and an Open Problem}

The constants in the main inequalities above are explicit, but they are not known to be sharp for the full pluriharmonic class. Recall that $C_{p,m,n}$ denotes the best constant in
\[
 \|f\|_{b^{mp}_{\mathbf{m-2}}(\U^n)}
 \leq C_{p,m,n}\|f\|_{h^p(\U^n)}.
\]
The constant function gives $C_{p,m,n}\geq1$, while the sharp constant for the holomorphic subclass is 1 by Markovi\'c's theorem. The optimal constants are nondecreasing with the dimension. Indeed, extending a function on $\U^n$ to $\U^{n+1}$ by making it independent of the last variable gives
\begin{equation}\label{eq:dimension-monotonicity}
 C_{p,m,1}\leq C_{p,m,2}\leq\cdots\leq C_{p,m,n}.
\end{equation}

The planar example of Kalaj and Me\v{s}trovi\'c \cite[Example~1.6]{KalajMestrovic} admits a weighted extension to every $m\geq2$. Together with the explicit dimension-free upper bound, it determines the limiting behavior of the optimal $p=2$ constants.

\begin{proposition}\label{prop:Lm-lower-bound}
Let $m\in\N$, $m\geq2$, and let
\[
 u_a(z)=\operatorname{Re}\frac{z}{1-az},
 \qquad 0<a<1.
\]
Then
\[
 \lim_{a\to1^-}
 \frac{\|u_a\|_{b^{2m}_{m-2}(\U)}}
 {\|u_a\|_{h^2(\U)}}=L_m,
\]
where $L_m$ is the explicit constant defined in \eqref{eq:L-definition}. Moreover,
\begin{equation}\label{eq:Lm-asymptotic}
 L_m=\sqrt2\left(1-\frac{\log 2}{4m}+o(m^{-1})\right),
 \qquad m\to\infty.
\end{equation}
\end{proposition}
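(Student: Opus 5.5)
The plan is to compute both norms of $u_a$ explicitly (the Hardy norm exactly, the Bergman norm asymptotically via a half-plane blow-up at $z=1$), read off $L_m$ from beta integrals, and then apply Stirling's formula.

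\textbf{Hardy norm.} Write $u_a=\operatorname{Re}F_a$ with
\[
F_a(z)=\frac{z}{1-az}=\sum_{k\ge0}a^kz^{k+1}.
\]
Since $F_a(0)=0$, the cross terms $F_a^2$ and $\overline{F_a}^2$ have zero circular mean. Hence
\[
\|u_a\|_{h^2(\U)}^2=\tfrac12\|F_a\|_{H^2}^2=\frac1{2(1-a^2)}\sim\frac1{4\varepsilon},
\qquad \varepsilon=1-a .
\]

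\textbf{Blow-up of the Bergman integral.} The mass of $\int_\U|u_a|^{2m}\,d\mu_{m-2}$ concentrates near $z=1$. I substitute
\[
z=1-\varepsilon w ,
\]
so that $w$ ranges over $\varepsilon^{-1}(1-\U)$, a set increasing to the right half-plane $\Pi=\{\operatorname{Re}w>0\}$. Then:
\begin{itemize}
\item $1-az=\varepsilon(1+w)+O(\varepsilon^2|w|)$, so $u_a\approx\varepsilon^{-1}\operatorname{Re}\frac1{1+w}$;
\item $1-|z|^2=2\varepsilon\operatorname{Re}w-\varepsilon^2|w|^2$;
\item $dx\,dy=\varepsilon^2\,dA(w)$.
\end{itemize}
Formally this gives
\[
\int_\U|u_a|^{2m}\,d\mu_{m-2}=\frac{m-1}{\pi}\,2^{m-2}\,\varepsilon^{-m}\bigl(I_m+o(1)\bigr),
\qquad
I_m=\int_\Pi\Bigl(\operatorname{Re}\frac1{1+w}\Bigr)^{2m}(\operatorname{Re}w)^{m-2}\,dA(w).
\]
Justifying this is the main obstacle. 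I would split the disc into the part $|1-z|<\delta$ and its complement. On the complement, $u_a$ is uniformly bounded, so it contributes $O(1)=o(\varepsilon^{-m})$. On the near part, I would use the bound
\[
|u_a(z)|\le\frac{C}{|1-az|}\lesssim\frac{C}{\varepsilon|1+w|}
\]
(valid since $|1-az|\gtrsim\varepsilon+|1-z|$), together with $1-|z|^2\le2\varepsilon\operatorname{Re}w$. Then the rescaled integrand is dominated by a multiple of $|1+w|^{-2m}(\operatorname{Re}w)^{m-2}$. This is integrable on $\Pi$: writing $w=s+it$, integrate in $t$ to get $(1+s)^{1-2m}$, and then $\int_0^\infty s^{m-2}(1+s)^{1-2m}\,ds<\infty$ because $m\ge2$. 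Dominated convergence then yields the displayed asymptotic.

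\textbf{Evaluating $I_m$.} With $w=s+it$ we have $\operatorname{Re}\frac1{1+w}=\frac{1+s}{(1+s)^2+t^2}$. Substituting $t=(1+s)\tau$ in the inner integral,
\[
I_m=\int_0^\infty s^{m-2}(1+s)^{1-2m}\,ds\int_{\mathbb R}\frac{d\tau}{(1+\tau^2)^{2m}}
=B(m-1,m)\,\frac{\sqrt\pi\,\Gamma(2m-\frac12)}{\Gamma(2m)} .
\]
Combining with the Hardy norm,
\[
\lim_{a\to1^-}\frac{\|u_a\|_{b^{2m}_{m-2}}^{2m}}{\|u_a\|_{h^2}^{2m}}
=4^m\,\frac{m-1}{\pi}\,2^{m-2}\,I_m .
\]
Next I would insert $B(m-1,m)=\frac{(m-2)!\,(m-1)!}{(2m-2)!}$ and the duplication formula
\[
\Gamma\bigl(2m-\tfrac12\bigr)=\frac{(4m-2)!\,\sqrt\pi}{4^{2m-1}(2m-1)!},
\]
and simplify. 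The target is exactly the bracket in \eqref{eq:L-definition}, i.e.\ $L_m^{2m}$; this is pure bookkeeping of the factors $\pi$ and powers of $2$.

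\textbf{Asymptotics.} For \eqref{eq:Lm-asymptotic}, take logarithms of $L_m^{2m}$ and apply Stirling's formula with its $\frac12\log(2\pi k)$ corrections to $(4m-2)!$, $((m-1)!)^2$, $((2m-1)!)^2$ and $(2m-2)!$. The leading terms in $m\log m$ and in $m$ should cancel down to $2m\log\sqrt2$, i.e.\ $m\log2$. The $O(\log m)$ terms should also cancel, since the $\log$-sum is $\tfrac12\log\frac{4m\cdot(2\pi m)}{(4\pi m)^2\cdot 2m}\cdot$const. What remains should be the constant $-\tfrac12\log2$. Dividing by $2m$ and exponentiating then gives
\[
L_m=\sqrt2\,\exp\Bigl(-\frac{\log2}{4m}+o(m^{-1})\Bigr),
\]
which is the claim.
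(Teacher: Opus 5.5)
Your proposal is correct and follows the paper's proof step by step. It uses the same Fourier computation $\|u_a\|_{h^2}^2=1/(2(1-a^2))$, the same blow-up $z=1-\varepsilon w$ onto the right half-plane with dominated convergence, and the same beta/gamma evaluation of the limiting integral, which does simplify to $L_m^{2m}$. Stirling then gives $L_m^{2m}\sim 2^m/\sqrt2$, exactly as in the paper.

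Your domination step is slightly cleaner than the paper's. Combining $|z|<1$ with $|1-az|\gtrsim\varepsilon+|1-z|$ already yields the global majorant $C\,(\operatorname{Re}w)^{m-2}|1+w|^{-2m}$ on all of $\Omega_\varepsilon$. So neither your near/far split nor the paper's auxiliary remainder $E_\varepsilon$ is needed.
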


\begin{proof}
The Fourier series of $z/(1-az)$ gives
\[
 \|u_a\|_{h^2(\U)}^2
 =\frac12\sum_{k=1}^{\infty}a^{2k-2}
 =\frac1{2(1-a^2)}.
\]
Put $\varepsilon=1-a$ and make the change of variables $z=1-\varepsilon w$, where $w=x+iy$. The image of $\U$ is
\[
 \Omega_\varepsilon
 =\{w:2x>\varepsilon|w|^2\}.
\]
Furthermore,
\[
 u_a(1-\varepsilon w)
 =\varepsilon^{-1}\operatorname{Re}
 \frac{1-\varepsilon w}{1+(1-\varepsilon)w},
 \qquad
 1-|1-\varepsilon w|^2
 =\varepsilon(2x-\varepsilon|w|^2).
\]
Let $dA$ denote planar Lebesgue area. Then
$d\mu_{m-2}(z)=\frac{m-1}{\pi}(1-|z|^2)^{m-2}\,dA(z)$,
while the change of variables above gives $dA(z)=\varepsilon^2\,dx\,dy$.
It follows that
\begin{align}\label{eq:scaled-ua}
 \varepsilon^m\|u_a\|_{b^{2m}_{m-2}(\U)}^{2m}
 &=\frac{m-1}{\pi}
 \int_{\Omega_\varepsilon}
 \left|\operatorname{Re}
 \frac{1-\varepsilon w}{1+(1-\varepsilon)w}\right|^{2m}
 \left(2x-\varepsilon|w|^2\right)^{m-2}\,dx\,dy.
\end{align}
For $0<\varepsilon\leq1/2$ and $w\in\Omega_\varepsilon$, we have
\[
 \frac{1-\varepsilon w}{1+(1-\varepsilon)w}
 =-\frac{\varepsilon}{1-\varepsilon}
 +\frac1{(1-\varepsilon)(\varepsilon+(1-\varepsilon)(1+w))}
.\]
Since $w\in\Omega_\varepsilon$ implies $x>0$, we have $\operatorname{Re}(1+w)=1+x>0$, and hence
\begin{align*}
 |\varepsilon+(1-\varepsilon)(1+w)|^2
 &=(1-\varepsilon)^2|1+w|^2
 +2\varepsilon(1-\varepsilon)\operatorname{Re}(1+w)+\varepsilon^2\\
 &\geq(1-\varepsilon)^2|1+w|^2
\end{align*}
Together with the preceding identity, this gives
\[
 \left|\frac{1-\varepsilon w}{1+(1-\varepsilon)w}\right|
 \leq\frac4{|1+w|}+2\varepsilon.
\]
Consequently, the integrand in \eqref{eq:scaled-ua} is bounded by a constant depending only on $m$ times
\[
 x^{m-2}|1+w|^{-2m}
 +\varepsilon^{2m}x^{m-2}\chi_{\Omega_\varepsilon}(w).
\]
Here $\chi_{\Omega_\varepsilon}$ denotes the characteristic function of $\Omega_\varepsilon$. Extend the integrand in \eqref{eq:scaled-ua} by zero to the half-plane $x>0$, and denote the extended integrand by $F_\varepsilon$. It converges pointwise to
\[
 F(x,y)=(2x)^{m-2}
 \left(\frac{1+x}{(1+x)^2+y^2}\right)^{2m}.
\]
Choose $c_m>0$ sufficiently large and set, on the half-plane $x>0$,
\[
 G(w)=c_mx^{m-2}|1+w|^{-2m},
 \qquad
 E_\varepsilon(w)=c_m\varepsilon^{2m}x^{m-2}
 \chi_{\Omega_\varepsilon}(w).
\]
The preceding estimate gives $F_\varepsilon\leq G+E_\varepsilon$, and the pointwise limit satisfies $F\leq G$. Moreover,
\[
 \int_{x>0}G(w)\,dx\,dy
 =c_m\int_0^\infty\int_{\mathbb R}
 \frac{x^{m-2}}{((1+x)^2+y^2)^m}\,dy\,dx<\infty.
\]
With $W=X+iY=\varepsilon w$, we also have
\[
 \|E_\varepsilon\|_{L^1}
 =c_m\varepsilon^m
 \int_{\{2X>|W|^2\}}X^{m-2}\,dX\,dY
 =O(\varepsilon^m),
\]
because the last domain is bounded. Therefore $\min\{F_\varepsilon,G\}\to F$ pointwise and is dominated by $G$, while
\[
 0\leq F_\varepsilon-\min\{F_\varepsilon,G\}
 \leq E_\varepsilon.
\]
Dominated convergence, together with the vanishing $L^1$-remainder, now gives
\begin{equation}\label{eq:scaled-limit}
 \lim_{a\to1^-}\varepsilon^m
 \|u_a\|_{b^{2m}_{m-2}(\U)}^{2m}
 =\frac{m-1}{\pi}2^{m-2}J_m,
\end{equation}
where
\[
 J_m=\int_0^\infty\int_{\mathbb R}
 x^{m-2}
 \left(\frac{1+x}{(1+x)^2+y^2}\right)^{2m}
 \,dy\,dx.
\]
The standard beta and gamma integral formulas yield
\begin{align*}
 J_m
 &=\sqrt\pi\frac{\Gamma(2m-\frac12)}{\Gamma(2m)}
 \int_0^\infty x^{m-2}(1+x)^{1-2m}\,dx\\
 &=\sqrt\pi\frac{\Gamma(2m-\frac12)}{\Gamma(2m)}
 \frac{\Gamma(m-1)\Gamma(m)}{\Gamma(2m-1)}.
\end{align*}
Since
\[
 \varepsilon\|u_a\|_{h^2(\U)}^2
 =\frac1{2(2-\varepsilon)}\longrightarrow\frac14,
\]
combining the preceding identity with \eqref{eq:scaled-limit} gives
\[
 \lim_{a\to1^-}
 \left(\frac{\|u_a\|_{b^{2m}_{m-2}(\U)}}
 {\|u_a\|_{h^2(\U)}}\right)^{2m}
 =4^m\frac{m-1}{\pi}2^{m-2}J_m
 =\frac{2^{3m-2}}{\sqrt\pi}
 \frac{\Gamma(2m-\frac12)\Gamma(m)^2}
 {\Gamma(2m)\Gamma(2m-1)}
 =L_m^{2m}.
\]
The last equality follows from the half-integer gamma identity and the definition \eqref{eq:L-definition}. Finally, Stirling's formula gives
\[
 L_m^{2m}\sim\frac{2^m}{\sqrt2},
\]
and hence
\[
 \log L_m
 =\frac12\log 2-\frac{\log 2}{4m}+o(m^{-1}).
\]
Taking exponents, we get \eqref{eq:Lm-asymptotic}.
\end{proof}

\begin{proof}[Proof of Theorem \ref{thm:intro-Lm-asymptotic}]
Proposition \ref{prop:Lm-lower-bound} gives $L_m\leq C_{2,m,1}$. Together with \eqref{eq:dimension-monotonicity} and Corollary \ref{cor:diagonal-Markovic} with $p=2$, this yields
\[
 L_m\leq C_{2,m,1}\leq C_{2,m,n}\leq D_m,
 \qquad n\in\N.
\]
The asymptotic formula for $L_m$ is \eqref{eq:Lm-asymptotic}. Since $D_m<\sqrt2$ and $L_m\to\sqrt2$, we also have
\[
 0\leq\sup_{n\in\N}|C_{2,m,n}-\sqrt2|
 \leq\sqrt2-L_m\longrightarrow0,
\]
which proves the uniform conclusion.
\end{proof}

For $m=2$, Proposition \ref{prop:Lm-lower-bound} recovers the example of Kalaj and Me\v{s}trovi\'c, since $L_2^4=5/2$. Hence, for every $n\in\N$,
\[
 L_2\leq C_{2,2,n}\leq D_2.
\]

The high-$p$ estimate has the expansion
\[
 D_m^{2n/p}
 =1+\frac{2n\log D_m}{p}+O_{m,n}(p^{-2}),
 \qquad p\to\infty.
\]
Here the implied constants depend only on $m$ and $n$. Thus $C_{p,m,n}=1+O_{m,n}(p^{-1})$. In contrast, $R_{p,m}\sim2p/\pi$, so the Riesz-transfer constant cannot be asymptotically sharp. When $m=2$, $n=1$, and $p>2$, the Poisson estimate gives
\[
 C_{p,2,1}\leq D_2^{2/p},
\]
which is precisely the upper bound obtained by Melentijevi\'c \cite[Section~9]{Melentijevic}.

Kalaj and Me\v{s}trovi\'c already noted that the sharpness of their planar $p=2$ upper estimate was open \cite[Remark~1.4]{KalajMestrovic}. To the best of our knowledge, even $C_{2,2,1}$ is not known exactly. We conclude with the main sharpness problem.

\begin{problem}
Let $1<p<\infty$, $m,n\in\N$, and $m\geq2$. Determine the exact constant $C_{p,m,n}$ and describe the extremal functions, if they exist. For $p>2$, determine when, if ever, $C_{p,m,n}=\Gamma_{p,m,n}$, and decide whether
\[
 \lim_{p\to\infty}\sup_{n\geq1}C_{p,m,n}=1
\]
holds for each fixed $m\geq2$.
\end{problem}

\section*{Statements and Declarations}
\noindent\textbf{Funding.}
The second author was partially supported by the Li Ka Shing Foundation STU--GTIIT Joint Research Grant (Grant No. 2024LKSFG06) and the Natural Science Foundation of Guangdong Province (Grant No. 2024A1515010467). The third author was supported by the National Natural Science Foundation of China (Grant No. 12271189), the Natural Science Foundation of Guangdong Province (Grant Nos. 2024A1515010467 and 2026A1515012333), the STU Scientific Research Initiation Grant (NTF25017T), the HQU Teaching Reform Project (HQJGKT2411), and the Fujian Alliance of Mathematics (Grant No. 2023SXLMMS07).

\noindent\textbf{Competing interests.}
The authors declare that they have no competing interests.

\noindent\textbf{Data availability.}
Data sharing is not applicable to this article, as no datasets were generated or analyzed during the current study.

\noindent\textbf{Author contributions.}
All authors contributed to the conception of the results, the proofs, and the preparation and revision of the manuscript. All authors read and approved the final manuscript.

\end{document}